\documentclass[12pt, reqno, oneside]{amsart}
% amsart loads amsmath, amsthm, and amsfonts

\usepackage{geometry}
\geometry{a4paper}

\setcounter{section}{-1}

%------------------------------------------------------------------------

\usepackage{amssymb}  % \xi
\usepackage[T1]{fontenc}
\usepackage{hyperref}  % Clickable links
\usepackage{mathtools}  % \DeclarePairedDelimiter
\usepackage{mathabx}  % widecheck
\usepackage{arydshln}  % dashed hline

% BIBLATEX begin

\usepackage{silence}
\WarningFilter{latex}{Writing or overwriting file}
\begin{filecontents*}[overwrite]{Littlewood-biblio.bib}
@article{shifted,
  title={Shifted Schur Functions},
  author={Okounkov, Andrei and Olshanski, Grigori},
  journal={St. Petersburg Math. J.},
  volume={9},
  number={2},
  pages={239-300},
  year={1998},
}

@article {Molev,
    AUTHOR = {Molev, A. I.},
     TITLE = {Comultiplication rules for the double {S}chur functions and
              {C}auchy identities},
   JOURNAL = {Electron. J. Combin.},
  FJOURNAL = {Electronic Journal of Combinatorics},
    VOLUME = {16},
      YEAR = {2009},
    NUMBER = {1},
     PAGES = {Research Paper 13, 44 pp},
   MRCLASS = {05E05},
  MRNUMBER = {2475536}
}

@article {Olsh_2019,
    AUTHOR = {Olshanski, Grigori},
     TITLE = {Interpolation {M}acdonald polynomials and {C}auchy-type
              identities},
   JOURNAL = {J. Combin. Theory Ser. A},
  FJOURNAL = {Journal of Combinatorial Theory. Series A},
    VOLUME = {162},
      YEAR = {2019},
     PAGES = {65--117}
}

@article {Okada,
    AUTHOR = {Okada, Soichi},
     TITLE = {A generalization of {S}chur's {$P$}- and {$Q$}-functions},
   JOURNAL = {S\'{e}m. Lothar. Combin.},
  FJOURNAL = {S\'{e}minaire Lotharingien de Combinatoire},
    VOLUME = {81},
      YEAR = {2020},
     PAGES = {Art. B81k, 50 pp},
   MRCLASS = {05E05},
  MRNUMBER = {4120149}
}

@incollection{4authors,
    AUTHOR = {Nakagawa, Jun and Noumi, Masatoshi and Shirakawa, Miki and
              Yamada, Yasuhiko},
     TITLE = {Tableau representation for {M}acdonald's ninth variation of
              {S}chur functions},
 BOOKTITLE = {Physics and combinatorics, 2000 ({N}agoya)},
     PAGES = {180--195},
 PUBLISHER = {World Sci. Publ., River Edge, NJ},
      YEAR = {2001}
}

@article {2018a,
    AUTHOR = {Foley, Ang\`ele M. and King, Ronald C.},
     TITLE = {Factorial characters of the classical {L}ie groups},
   JOURNAL = {European J. Combin.},
  FJOURNAL = {European Journal of Combinatorics},
    VOLUME = {70},
      YEAR = {2018},
     PAGES = {325--353}
}

@article {Foley-King-ninth,
    AUTHOR = {Foley, Ang\`ele M. and King, Ronald C.},
     TITLE = {Determinantal and {P}faffian identities for ninth variation
              skew {S}chur functions and {$Q$}-functions},
   JOURNAL = {European J. Combin.},
  FJOURNAL = {European Journal of Combinatorics},
    VOLUME = {93},
      YEAR = {2021},
     PAGES = {Paper No. 103271, 31 pp}
}

@incollection {Macdonald,
    AUTHOR = {Macdonald, I. G.},
     TITLE = {Schur functions: theme and variations},
 BOOKTITLE = {S\'{e}minaire {L}otharingien de {C}ombinatoire
              ({S}aint-{N}abor, 1992)},
    SERIES = {Publ. Inst. Rech. Math. Av.},
    VOLUME = {498},
     PAGES = {5--39},
 PUBLISHER = {Univ. Louis Pasteur, Strasbourg},
      YEAR = {1992}
}

@article {Szego,
    AUTHOR = {Olshanski, Grigori},
     TITLE = {Cauchy-{S}zeg\H{o} kernels for {H}ardy spaces on simple {L}ie
              groups},
   JOURNAL = {J. Lie Theory},
  FJOURNAL = {Journal of Lie Theory},
    VOLUME = {5},
      YEAR = {1995},
    NUMBER = {2},
     PAGES = {241--273}
}

@article {KoikeTerada,
    AUTHOR = {Koike, Kazuhiko and Terada, Itaru},
     TITLE = {Young-diagrammatic methods for the representation theory of
              the classical groups of type {$B_n,\;C_n,\;D_n$}},
   JOURNAL = {J. Algebra},
  FJOURNAL = {Journal of Algebra},
    VOLUME = {107},
      YEAR = {1987},
    NUMBER = {2},
     PAGES = {466--511}
}

@incollection {Sundaram,
    AUTHOR = {Sundaram, Sheila},
     TITLE = {Tableaux in the representation theory of the classical {L}ie
              groups},
 BOOKTITLE = {Invariant theory and tableaux ({M}inneapolis, {MN}, 1988)},
    SERIES = {IMA Vol. Math. Appl.},
    VOLUME = {19},
     PAGES = {191--225},
 PUBLISHER = {Springer, New York},
      YEAR = {1990}
}

@article {SergeevVeselov,
    AUTHOR = {Sergeev, A. N. and Veselov, A. P.},
     TITLE = {Jacobi-{T}rudy formula for generalized {S}chur polynomials},
   JOURNAL = {Mosc. Math. J.},
  FJOURNAL = {Moscow Mathematical Journal},
    VOLUME = {14},
      YEAR = {2014},
    NUMBER = {1},
     PAGES = {161--168}
}

@article {HoweRemarks,
    AUTHOR = {Howe, Roger},
     TITLE = {Remarks on classical invariant theory},
   JOURNAL = {Trans. Amer. Math. Soc.},
  FJOURNAL = {Transactions of the American Mathematical Society},
    VOLUME = {313},
      YEAR = {1989},
    NUMBER = {2},
     PAGES = {539--570}
}

@article {Koike,
    AUTHOR = {Koike, Kazuhiko},
     TITLE = {On the decomposition of tensor products of the representations
              of the classical groups: by means of the universal characters},
   JOURNAL = {Adv. Math.},
  FJOURNAL = {Advances in Mathematics},
    VOLUME = {74},
      YEAR = {1989},
    NUMBER = {1},
     PAGES = {57--86}
}

@article {borodin2011boundary,
    AUTHOR = {Borodin, Alexei and Olshanski, Grigori},
     TITLE = {The boundary of the {G}elfand-{T}setlin graph: a new approach},
   JOURNAL = {Adv. Math.},
  FJOURNAL = {Advances in Mathematics},
    VOLUME = {230},
      YEAR = {2012},
    NUMBER = {4-6},
     PAGES = {1738--1779}
}

@article {aggarwal2021free,
    AUTHOR = {Aggarwal, Amol and Borodin, Alexei and Petrov, Leonid and
              Wheeler, Michael},
     TITLE = {Free fermion six vertex model: symmetric functions and random
              domino tilings},
   JOURNAL = {Selecta Math. (N.S.)},
  FJOURNAL = {Selecta Mathematica. New Series},
    VOLUME = {29},
      YEAR = {2023},
    NUMBER = {3},
     PAGES = {Paper No. 36, 138}
}

@book {Stanley,
    AUTHOR = {Stanley, Richard P.},
     TITLE = {Enumerative combinatorics. {V}ol. 2},
 PUBLISHER = {Cambridge University Press, Cambridge},
      YEAR = {1999}
}

@book {Weyl,
    AUTHOR = {Weyl, Hermann},
     TITLE = {The {C}lassical {G}roups. {T}heir {I}nvariants and
              {R}epresentations},
 PUBLISHER = {Princeton University Press, Princeton, N.J.},
      YEAR = {1939},
     PAGES = {xii+302},
   MRCLASS = {20.0X},
  MRNUMBER = {0000255},
MRREVIEWER = {C. Chevalley},
}

@book{FultonHarris,
    AUTHOR = {Fulton, William and Harris, Joe},
     TITLE = {Representation Theory: A First Course},
    SERIES = {Graduate Texts in Mathematics},
    VOLUME = {129},
 PUBLISHER = {Springer-Verlag, New York},
      YEAR = {1991}
}

@article {Itoh,
    AUTHOR = {Itoh, Minoru},
     TITLE = {Schur type functions associated with polynomial sequences of
              binomial type},
   JOURNAL = {Selecta Math. (N.S.)},
  FJOURNAL = {Selecta Mathematica. New Series},
    VOLUME = {14},
      YEAR = {2009},
    NUMBER = {2},
     PAGES = {247--274}
}

@article {2018b,
    AUTHOR = {Foley, Ang\`ele M. and King, Ronald C.},
     TITLE = {Factorial {$Q$}-functions and {T}okuyama identities for
              classical {L}ie groups},
   JOURNAL = {European J. Combin.},
  FJOURNAL = {European Journal of Combinatorics},
    VOLUME = {73},
      YEAR = {2018},
 SORTTITLE = "b",
     PAGES = {89--113}
}

@article {JingRozhkovskaya,
    AUTHOR = {Jing, Naihuan and Rozhkovskaya, Natasha},
     TITLE = {Vertex operators arising from {J}acobi-{T}rudi identities},
   JOURNAL = {Comm. Math. Phys.},
  FJOURNAL = {Communications in Mathematical Physics},
    VOLUME = {346},
      YEAR = {2016},
    NUMBER = {2},
     PAGES = {679--701}
}

@article {Benkart,
    AUTHOR = {Benkart, Georgia and Shader, Chanyoung Lee and Ram, Arun},
     TITLE = {Tensor product representations for orthosymplectic {L}ie
              superalgebras},
   JOURNAL = {J. Pure Appl. Algebra},
  FJOURNAL = {Journal of Pure and Applied Algebra},
    VOLUME = {130},
      YEAR = {1998},
    NUMBER = {1},
     PAGES = {1--48},
}

@article {Stokke1,
    AUTHOR = {Stokke, Anna and Visentin, Terry},
     TITLE = {Lattice path constructions for orthosymplectic determinantal
              formulas},
   JOURNAL = {European J. Combin.},
  FJOURNAL = {European Journal of Combinatorics},
    VOLUME = {58},
      YEAR = {2016},
     PAGES = {38--51},
}

@article {Stokke2,
    AUTHOR = {Stokke, Anna},
     TITLE = {An orthosymplectic {P}ieri rule},
   JOURNAL = {Electron. J. Combin.},
  FJOURNAL = {Electronic Journal of Combinatorics},
    VOLUME = {25},
      YEAR = {2018},
    NUMBER = {3},
     PAGES = {Paper No. 3.37, 17},
}

@article {Stokke3,
    AUTHOR = {Patel, Aalekh and Patel, Harsh and Stokke, Anna},
     TITLE = {Orthosymplectic {C}auchy identities},
   JOURNAL = {Ann. Comb.},
  FJOURNAL = {Annals of Combinatorics},
    VOLUME = {26},
      YEAR = {2022},
    NUMBER = {2},
     PAGES = {309--327}
}

@article {Zeta,
    AUTHOR = {Nakasuji, Maki and Phuksuwan, Ouamporn and Yamasaki,
              Yoshinori},
     TITLE = {On {S}chur multiple zeta functions: a combinatoric
              generalization of multiple zeta functions},
   JOURNAL = {Adv. Math.},
  FJOURNAL = {Advances in Mathematics},
    VOLUME = {333},
      YEAR = {2018},
     PAGES = {570--619}
}

@article {Cummins_King,
    AUTHOR = {Cummins, C. J. and King, R. C.},
     TITLE = {Composite {Y}oung diagrams, supercharacters of {${\rm
              U}(M/N)$} and modification rules},
   JOURNAL = {J. Phys. A},
  FJOURNAL = {Journal of Physics. A. Mathematical and General},
    VOLUME = {20},
      YEAR = {1987},
    NUMBER = {11},
     PAGES = {3121--3133}
}

@article {Balantekin_Bars,
    AUTHOR = {Balantekin, A. Baha and Bars, Itzhak},
     TITLE = {Representations of supergroups},
   JOURNAL = {J. Math. Phys.},
  FJOURNAL = {Journal of Mathematical Physics},
    VOLUME = {22},
      YEAR = {1981},
    NUMBER = {8},
     PAGES = {1810--1818}
}

@article {Littlewood_article,
    AUTHOR = {Littlewood, D. E.},
     TITLE = {On orthogonal and symplectic group characters},
   JOURNAL = {J. London. Math. Soc.},
  FJOURNAL = {Journal of the London Mathematical Society. Second Series},
    VOLUME = {30},
      YEAR = {1955},
     PAGES = {121--122}
}

@book {Littlewood_book,
    AUTHOR = {Littlewood, Dudley E.},
     TITLE = {The theory of group characters and matrix representations of
              groups},
      NOTE = {Reprint of the second (1950) edition},
 PUBLISHER = {AMS Chelsea Publishing, Providence, RI},
      YEAR = {2006},
     PAGES = {viii+314}
}

@incollection {Olsh_Formalism,
    AUTHOR = {Ol'shanskii, G. I.},
     TITLE = {Unitary representations of infinite-dimensional pairs
              {$(G,K)$} and the formalism of {R}. {H}owe},
 BOOKTITLE = {Representation of {L}ie groups and related topics},
    SERIES = {Adv. Stud. Contemp. Math.},
    VOLUME = {7},
     PAGES = {269--463},
 PUBLISHER = {Gordon and Breach, New York},
      YEAR = {1990}
}

@article {Foley-King-super,
    AUTHOR = {Foley, Ang\`ele M. and King, Ronald C.},
     TITLE = {Factorial supersymmetric skew {S}chur functions and ninth
              variation determinantal identities},
   JOURNAL = {Ann. Comb.},
  FJOURNAL = {Annals of Combinatorics},
    VOLUME = {25},
      YEAR = {2021},
    NUMBER = {2},
     PAGES = {229--253}
}

@book {Macdonald-book,
    AUTHOR = {Macdonald, I. G.},
     TITLE = {Symmetric functions and {H}all polynomials},
    SERIES = {Oxford Mathematical Monographs},
   EDITION = {Second},
      NOTE = {With contributions by A. Zelevinsky,
              Oxford Science Publications},
 PUBLISHER = {The Clarendon Press, Oxford University Press, New York},
      YEAR = {1995},
     PAGES = {x+475}
}

@article {Rains,
    AUTHOR = {Rains, Eric M.},
     TITLE = {{${\rm BC}_n$}-symmetric polynomials},
   JOURNAL = {Transform. Groups},
  FJOURNAL = {Transformation Groups},
    VOLUME = {10},
      YEAR = {2005},
    NUMBER = {1},
     PAGES = {63--132}
}
\end{filecontents*}

\usepackage[backend = biber, style = alphabetic, maxnames = 10, maxalphanames = 10, backref = false, backrefstyle = none, giveninits = true]{biblatex}
\renewbibmacro{in:}{%
    \ifentrytype{article}{}{\printtext{\bibstring{in}\intitlepunct}}}

\addbibresource{Littlewood-biblio.bib}

% BIBLATEX end

\theoremstyle{definition}
\newtheorem{definition}{Definition}[section]
\newtheorem{example}[definition]{Example}

\theoremstyle{remark}
\newtheorem{remark}{Remark}[section]
\newtheorem*{notation*}{Notation}
\newtheorem*{observation*}{Observation}

\theoremstyle{plain}
\newtheorem{theorem}[definition]{Theorem}
\newtheorem{lemma}[definition]{Lemma}
\newtheorem{proposition}[definition]{Proposition}
\newtheorem{corollary}[definition]{Corollary}

\numberwithin{equation}{section}

\DeclareMathOperator{\GL}{GL}
\DeclareMathOperator{\Sp}{Sp}
\DeclareMathOperator{\SO}{SO}
\let\O\relax
\DeclareMathOperator{\O}{O}
\let\U\relax
\DeclareMathOperator{\U}{U}

\DeclareMathOperator{\V}{V}

\let\sp\relax
\DeclareMathOperator{\sp}{\mathrm{sp}}
\DeclareMathOperator{\so}{\mathrm{so}}
\let\o\relax
\DeclareMathOperator{\o}{\mathrm{o}}

\DeclareMathOperator{\jt}{JT}
\DeclareMathOperator{\sgn}{sgn}

\DeclareMathOperator{\GTP}{GTP}

\usepackage{letltxmacro}
\LetLtxMacro{\oldsum}{\sum}
\renewcommand{\sum}{\oldsum\limits}

\LetLtxMacro{\oldprod}{\prod}
\renewcommand{\prod}{\oldprod\limits}

\LetLtxMacro{\oldphi}{\phi}
\renewcommand{\phi}{\varphi}

\newcommand{\factorial}[3]{ ( #1 \mkern 2mu | \mkern 2mu #2 )^{ #3 } }

\DeclarePairedDelimiter{\abs}{\lvert}{\rvert}

%------------------------------------------------------------------------

\title[Ninth variation of classical group characters]{Ninth variation of classical group characters of type A--D and Littlewood identities}
\author{Mikhail Goltsblat}
\address{\parbox{\linewidth}{Department of Mathematics, Yale University, New Haven, CT 06511, USA\\
Faculty of Mathematics, HSE University, Moscow, Russia \textnormal{(prior to August 2022)}}}
\email{misha.goltsblat@yale.edu}

\begin{document}

\begin{abstract}
    We introduce certain generalisations of the characters of the classical Lie groups, extending the recently defined factorial characters of Foley and King. 
    In this extension, the factorial powers are replaced with an arbitrary sequence of polynomials, as in Sergeev-Veselov’s generalised Schur functions and Okada’s generalised Schur P- and Q-functions.
    We also offer a similar generalisation for the rational Schur functions.
    We derive Littlewood-type identities for our generalisations. 
    These identities allow us to give new (unflagged) Jacobi--Trudi identities for the Foley--King factorial characters and for rational versions of the factorial Schur functions.
    We also propose an extension of the original Macdonald's ninth variation of Schur functions to the case of symplectic and orthogonal characters, which helps us prove N\"{a}gelsbach--Kostka identities.
\end{abstract}

\maketitle

%%%%%%%%%%%%%%%%%% Section

\section{Introduction}

\subsection{Motivation}
In the paper ``Schur functions: theme and variations'' I.~G.~Macdonald \cite{Macdonald} listed nine generalisations of Schur functions, which he called variations. The 6th variation is the factorial Schur functions. The ninth variation unifies variations~4--8.

The factorial Schur functions are defined as follows (see Macdonald~\cite{Macdonald}, Molev~\cite{Molev}). Take a sequence of variables $x_1, \dotsc, x_n$ and a doubly infinite sequence of indeterminates $c = (c_n)_{n \in \mathbb{Z}}$. Define the factorial (interpolation) powers 
\[
    \factorial{x}{c}{n} = (x - c_0) \dotsm (x - c_{n - 1}). 
\]
Given a partition $\lambda = (\lambda_1, \dotsc, \lambda_n)$ of length $\ell(\lambda) \le n$, the corresponding factorial Schur function $s_\lambda \factorial{x}{c}{}$ is the ratio of alternants
\[
    s_\lambda \factorial{x}{c}{} 
    =
    \dfrac
    {
        \det \left[ \factorial{x_i}{c}{\lambda_j + n - j} \right]_{i, j = 1}^n
    }
    {
        \det \left[ \factorial{x_i}{c}{n - j} \right]_{i, j = 1}^n
    }
    .
\]
It is easy to see that the denominator equals the usual Vandermonde determinant, so $s_\lambda \factorial{x}{c}{}$ is a (non-homogeneous) symmetric polynomial. Setting all $c_n = 0$, the polynomial $s_\lambda \factorial{x}{c}{}$ turns into the usual Schur polynomial $s_\lambda (x)$. In the $c_k = k$ specialisation, one recovers the shifted Schur polynomials $s_\lambda^*(y_1, \dotsc, y_n)$ in the variables $y_i = x_i - n + i$, see Okounkov--Olshanski~\cite{shifted}. The $c_k = q^{n - k - 1}$ specialisation gives the $q = t$ case of Olshanski's interpolation polynomials $I_\lambda(x_1, \dotsc, x_n; q, t)$, see Olshanski~\cite[(4.1)]{Olsh_2019}.

One can also define factorial analogues of the complete and elementary symmetric polynomials:
\begin{align*}
    & h_k \factorial{x}{c}{} = s_{(k, 0, \dotsc, 0)} \factorial{x}{c}{}, \qquad k \ge 0, \\
    & e_k \factorial{x}{c}{} = s_{(1^k)} \factorial{x}{c}{}, \qquad 0 \le k \le n.
\end{align*}
It turns out that the factorial Schur functions satisfy analogues of the Jacobi--Trudi, N\"{a}gelsbach--Kostka (dual Jacobi--Trudi), Giambelli, Cauchy and dual Cauchy identities, as well as a combinatorial formula representing them as a sum over semistandard Young tableaux. To state these formulas, introduce the shift operators $\tau^r$ for each $r \in \mathbb{Z}$ acting on the sequence~$c$ as 
\[
    (\tau^r c)_n = c_{n + r}.
\]
The Jacobi--Trudi and N\"{a}gelsbach--Kostka formulas take the form 
\begin{equation}\label{eq: unflagged factorial identity}
    s_\lambda \factorial{x}{c}{}
    =
    \det \left[ h_{\lambda_i - i + j} \factorial{x}{\tau^{1 - j} c}{} \right]_{i, j = 1}^n
    =
    \det \left[ e_{\lambda'_i - i + j} \factorial{x}{\tau^{j - 1} c}{} \right]_{i, j = 1}^{\lambda_1},
\end{equation}
where $\lambda'$ is the transpose of $\lambda$, $h_k \factorial{x}{\cdot}{} = e_k \factorial{x}{\cdot}{} = 0$ for $k < 0$, and $e_k \factorial{x}{\cdot}{} = 0$ for $k > n$, see Macdonald~\cite[(6.7)]{Macdonald}.
The Giambelli identity remains valid in its original form:
\[
    s_\lambda \factorial{x}{c}{}
    = 
    \det \left[ s_{ (\alpha_i | \beta_j) } \factorial{x}{c}{} \right]_{i, j = 1}^d,
\]
where $\lambda = (\alpha_1, \dotsc, \alpha_d | \beta_1, \dotsc, \beta_d)$ in the Frobenius notation. The Cauchy identity, due to Molev~\cite[Theorem~3.1]{Molev} and Olshanski in the special case of shifted Schur functions (see also Olshanski~\cite[Proposition~4.8]{Olsh_2019}), becomes 
\[
    \sum_{\substack{\lambda \, \textup{partition} \\ \ell(\lambda) \le n}}
    s_\lambda \factorial{x}{c}{} \widehat{s}_\lambda \factorial{u}{c}{} 
    =
    \dfrac
    {
        1
    }
    {
        \prod_{i, j = 1}^{n} (1 - x_i u_j) 
    }
    ,
\]
where $u_1, \dotsc, u_n$ is another sequence of variables.
Here the dual object, $\widehat{s}_\lambda \factorial{u}{c}{} $, is not a symmetric polynomial anymore, but a symmetric power series in $u_1, \dotsc, u_n$.

Macdonald's ninth variation of Schur functions is defined in the following way. Let $h_{rs}$ ($r \ge 1$, $s \in \mathbb{Z}$) be indeterminates. Also, set $h_{0s} = 1$ and $h_{rs} = 0$ for $r < 0$ and all $s$. Define an authomorphism $\phi$ of the ring generated by the $h_{rs}$ by $\phi (h_{rs}) = h_{r, s + 1}$, so that we can write $h_{rs} = \phi^s h_r$, where $h_r = h_{r 0}$. Let $\lambda$ be a partition of length $\ell(\lambda) \le n$. The ninth variation is defined as 
\[
    s_\lambda = \det \left[ \phi^{1 - j} h_{\lambda_i - i + j} \right]_{i, j = 1}^n.
\]
This emulates the factorial Jacobi--Trudi identity: if we specialise $\phi^s h_r = h_r \factorial{x}{\tau^s c}{}$, we obtain the Jacobi--Trudi determinant for the factorial Schur functions.

It turns out that already in the generality of Macdonald's ninth variation, one can prove the N\"{a}gelsbach--Kostka and Giambelli identities: we have 
\[
    s_\lambda 
    = 
    \det \left[ \phi^{j - 1} e_{\lambda'_i - i + j} \right]_{i, j = 1}^{\lambda_1} 
    = 
    \det \left[ s_{ (\alpha_i | \beta_j) } \right]_{i, j = 1}^d,
\]
where $e_k = s_{(1^k)}$ for $k \ge 0$, and $e_k = 0$ for $k < 0$, see Macdonald~\cite[(9.6'), (9.7)]{Macdonald}.

Subsequent to Macdonald's paper, several other generalisations of the Schur functions appeared, some of which are also referred to as ninth variations.

Nakagawa--Noumi--Shirakawa--Yamada~\cite{4authors} start with a bialternant formula and deduce flagged\footnote{The adjective \emph{flagged} means that the number of variables in an entry of the corresponding determinantal identity depends on its position in the matrix, as e.g.~in~\eqref{flagged}. Otherwise the identity is \emph{unflagged}. For the usual Schur polynomials $s_\lambda(x_1, \dotsc, x_n)$, the unflagged and the flagged Jacobi--Trudi identities are
\begin{align*}
    s_\lambda(x_1, \dotsc, x_n) = \det \left[ h_{\lambda_i - i + j}(x_1, \dotsc, x_n) \right]_{i, j = 1}^n,
    \\
    s_\lambda(x_1, \dotsc, x_n) = \det \left[ h_{\lambda_i - i + j}(x_1, \dotsc, x_{n + 1 - j}) \right]_{i, j = 1}^n.
\end{align*}

The second identity is due to Nakagawa--Noumi--Shirakawa--Yamada~\cite{4authors}. In the case of the usual Schur polynomials, it implies the first one. However, in the general situation of~\cite{4authors} only flagged Jacobi--Trudi and N\"{a}gelsbach--Kostka hold. Surprisingly, the Giambelli formula remains unflagged.} versions of the Jacobi--Trudi and N\"{a}gelsbach--Kostka identities, an unflagged Giambelli identity, and a combinatorial formula. 
Namely, take an infinite matrix of variables $X = (x_{ij})_{1 \le i, j < \infty}$. Given a partition $\lambda$ of length $\ell(\lambda) \le n$, assign to it the increasing sequence of integers 
\[
    \lambda_n + 1 < \lambda_{n - 1} + 2 < \dotsb < \lambda_1 + n.
\]
Let $\xi_\lambda^{(n)}(X)$ denote the minor of $X$ corresponding to the rows $1, \dotsc, n$ and columns $\lambda_n + 1, \lambda_{n - 1} + 2, \dotsc, \lambda_1 + n$. Nakagawa--Noumi--Shirakawa--Yamada define their ninth variation as the ratio 
\[
    S_\lambda^{(n)}
    =
    \dfrac{
        \xi_\lambda^{(n)}(X)
    }{
        \xi_\varnothing^{(n)}(X)
    }.
\]
If we specialise $x_{ij} = \factorial{x_i}{c}{j}$, we get the definition of the factorial Schur functions. Note that the superscript $(n)$ indicates the number of rows used, i.e.~the number of variables $x_i$ in the specialisation to factorial Schur functions.
One can also consider
\begin{align*}
    & h_k^{(n)} = S_{(k, 0, \dotsc, 0)}^{(n)}, \quad k \ge 0, \\
    & e_k^{(n)} = S_{(1^k)}^{(n)}, \quad k = 0, 1, \dotsc, n,
\end{align*}
and we also define $h_k^{(n)} = e_k^{(n)} = 0$ for $k < 0$.
Then the following flagged Jacobi--Trudi and N\"{a}gelsbach--Kostka formulas hold,
\begin{equation}\label{flagged}
    S_\lambda^{(n)}
    =
    \det \left[ h_{\lambda_i - i + j}^{(n + 1 - j)} \right]_{i, j = 1}^n
    =
    \det \left[ e_{\lambda'_i - i + j}^{(n + j - 1)} \right]_{i, j = 1}^{\lambda_1},
\end{equation}
as well as the (unflagged) Giambelli formula
\[
    S_\lambda^{(n)}
    =
    \det \left[ h_{(\alpha_i | \beta_j)}^{(n)} \right]_{i, j = 1}^d.
\]

In~\cite{Foley-King-ninth}, Foley and King start with a combinatorial formula and prove outside decomposition determinantal identities (Hamel--Goulden determinants), which include the Jacobi--Trudi, N\"{a}gelsbach--Kostka, and Giambelli identities. Since the present paper does not deal with combinatorial formulas, we omit the details. See also~\cite{Foley-King-super}.

Sergeev and Veselov~\cite{SergeevVeselov} consider a ninth variation associated with a sequence of polynomials, which they call \emph{generalised Schur functions}. 
They take a sequence of polynomials $(f_n (x))_{n \ge 0}$ satisfying a certain recurrence relation, which is not important for the purposes of this introduction, and consider the ratio 
\[
    S_\lambda (x) 
    =
    \dfrac{
        \det \left[ f_{\lambda_j + n - j} (x_i) \right]_{i, j = 1}^n
    }{
        \det \left[ f_{n - j} (x_i) \right]_{i, j = 1}^n
    }.
\]
The recurrence relation allows them to produce versions of the Jacobi--Trudi and Giambelli identities.
Functions of this kind also appear in Itoh~\cite{Itoh} and Olshanski~\cite{Olsh_2019} under the name of Schur-type functions. The Sergeev--Veselov generalisation is the one we are most concerned with in this paper.

These functions also appear in Okada~\cite{Okada}, where Schur’s $P$- and $Q$-functions associated with an arbitrary sequence of polynomials are introduced, and no recurrence relations on the sequence are assumed. Along with other results, Okada proves a Cauchy identity for the generalised Schur $P$-functions.

All the aforementioned works take place in the type A. 
Recently Foley and King~\cite{2018a} introduced factorial analogues of the characters of the other classical Lie groups (see also Foley--King~\cite{2018b}, where they introduce factorial analogues of the type BCD Schur $Q$-functions). They define them deforming the respective Weyl character formulas: writing $\overline{x}_i$ for $x_i^{-1}$, 
\begin{align*}
    \sp_\lambda \factorial{x}{c}{} 
    &= 
    \dfrac{
        \det \left[ x_i \factorial{x_i}{c}{\lambda_j + n - j} - \overline{x}_i \factorial{\overline{x}_i}{c}{\lambda_j + n - j} \right]
        }{
            \det \left[ x_i \factorial{x_i}{c}{n - j} - \overline{x}_i \factorial{\overline{x}_i}{c}{n - j} \right]
        }, \\
    \so_\lambda \factorial{x}{c}{}
    &= 
    \dfrac{ 
        \det \left[ x_i^{1/2} \factorial{x_i}{c}{\lambda_j + n - j} - \overline{x}_i^{1/2} \factorial{\overline{x}_i}{c}{\lambda_j + n - j} \right] 
        }{
            \det \left[ x_i^{1/2} \factorial{x_i}{c}{n - j} - \overline{x}_i^{1/2} \factorial{\overline{x}_i}{c}{n - j} \right]
        }, \\
    \o_\lambda \factorial{x}{c}{} 
    &= 
    \dfrac{ 
        \eta \det \left[ \factorial{x_i}{c}{\lambda_j + n - j} + \factorial{\overline{x}_i}{c}{\lambda_j + n - j} \right] 
        }{ 
            \frac{1}{2} \det \left[ \factorial{x_i}{c}{n - j} + \factorial{\overline{x}_i}{c}{n - j} \right]
        }, 
    \quad \hbox{where} \quad \eta = 
    \begin{cases}
        \frac{1}{2} & \hbox{if $\lambda_n = 0$}, \\
        1 & \hbox{if $\lambda_n > 0$}.
    \end{cases}
\end{align*}
Here $\so_\lambda \factorial{x}{c}{}$ is a deformation of an $\SO(2n + 1)$ character, while in the type D we work with the full orthogonal group $\O(2n)$, hence the notation $\o_\lambda \factorial{x}{c}{}$.
Note that setting all $c_n = 0$ recovers the Weyl character formulas in types CBD.
Foley and King derive flagged Jacobi--Trudi identities and combinatorial formulas for their factorial characters.
Our primary aim is to construct unflagged versions of the Jacobi--Trudi identities in the factorial setting.

\subsection{Results}
The present work is based upon the observation that instead of factorial powers, one can take an arbitrary sequence of polynomials in Foley--King's definition. This offers a Sergeev--Veselov kind of ninth variation for the symplectic and orthogonal characters (Definition~\ref{Definition: ninth CBD}). The results of Nakagawa--Noumi--Shirakawa--Yamada immediately allow us to give flagged Jacobi--Trudi and N\"{a}gelsbach--Kostka formulas, as well as an unflagged Giambelli formula.

Furthermore, we extend both the factorial Schur functions and the Sergeev--Veselov generalised Schur functions to the case when $\lambda$ is not a partition but an arbitrary dominant weight of $\GL_n$, i.e.~a signature, see Section~\ref{Subsection: ninth variation type A}. The factorial case of the extension seems to be known (see Jing--Rozhkovskaya~\cite[(5.3) and the unnumbered formula below (5.3)]{JingRozhkovskaya}), while the general case is new.

Most importantly, it turns out that our ninth variation characters (in all types) satisfy analogues of the Littlewood identities.
Classically, as the Cauchy identity describes the Howe duality between $\GL_n$ and $\GL_m$, the Littlewood identities describe Howe dualities for the symplectic/orthogonal pairs, see Howe~\cite{HoweRemarks}. 
Ninth variations of these identities are given in Theorems~\ref{Theorem: Sp Littlewood},~\ref{Theorem: SO(2n + 1) Littlewood},~\ref{Theorem: SO(2n) Littlewood}. Their type~A case is Theorem~\ref{Theorem: GL Littlewood}. It corresponds to a Howe duality between $\U(p, q)$ and $\U(p) \times \U(q)$, see~\cite[Section~6]{Olsh_Formalism}.

Throughout the article we work with the types BCD first since the proofs are less technically involved.

The Littlewood identities allow us to derive new unflagged Jacobi--Trudi formulas for the factorial characters of Foley and King, as well as a Jacobi--Trudi formula for the signature factorial Schur functions, see Theorems~\ref{Theorem: Jacobi-Trudi in type C},~\ref{Theorem: Jacobi-Trudi in type B},~\ref{Theorem: Jacobi-Trudi in type D},~\ref{Theorem: Jacobi-Trudi in type A}. This is possible due to the principle that once we have a Cauchy-type identity, which in our case is a ninth variation Littlewood identity, it is natural to expect that an identity of the Jacobi--Trudi form could be deduced from it (examples include \cite[Section 7.16]{Stanley}, \cite[Section 6]{borodin2011boundary}, \cite[Section~5.4]{aggarwal2021free}). For example, in the case of symplectic characters, our Jacobi--Trudi identity looks as follows:
\[
    \sp_\lambda \factorial{x}{c}{} = 
    \dfrac{1}{2} 
    \det \left[ h_{\lambda_i - i + j} \factorial{x, \overline{x}}{\tau^{1 - n - j} c}{} + h_{\lambda_i - i - j + 2} \factorial{x, \overline{x}}{\tau^{-1 - n + j} c}{} \right]_{i, j = 1}^n,
\]
where we assume that $c_n = 0$ for $n < 0$. Note that this assumption is also actively used in Foley--King~\cite{2018a}.

Our Jacobi--Trudi identities for the Foley--King factorial characters make it possible to define a ninth variation of symplectic and orthogonal characters in exactly the same way as Macdonald did originally. That is, we define 
\[
    \sp_\lambda 
    = 
    \dfrac{1}{2} 
    \det \left[ \phi^{1 - j} h_{\lambda_i - i + j} + \phi^{j - 1} h_{\lambda_i - i - j + 2} \right]_{i, j = 1}^n,
\]
and similarly for the orthogonal characters.
In this generality we prove symplectic and orthogonal N\"{a}gelsbach--Kostka identities~\eqref{eq: ninth sp nagelsbach-kostka},~\eqref{eq: ninth o nagelsbach-kostka}.
For example, 
\[
    \sp_\lambda 
    = 
    \det \left[ \phi^{j - 1} e_{\lambda'_i - i + j} - \phi^{j - 1} e_{\lambda'_i - i - j} \right]_{i,j = 1}^{\lambda_1}.
\]

Recall that the classical Schur polynomials can be lifted to symmetric functions (Macdonald~\cite[Chapters I.2--I.3]{Macdonald-book}). This is achieved by the fact that $s_\lambda(x_1, \dotsc, x_n, 0) = s_\lambda(x_1, \dotsc, x_n)$. Symplectic and orthogonal characters do not have such stability. Instead (say~in the symplectic case), Koike and Terada~\cite{KoikeTerada} propose to consider the symmetric functions
\[
    \dfrac{1}{2} 
    \det \left[ h_{\lambda_i - i + j} + h_{\lambda_i - i - j + 2} \right]_{i, j = 1}^n,
\]
which they call \emph{universal characters}. Here $h_m$ are the complete symmetric functions. Under the $h_m \mapsto h_m(x_1, \dotsc, x_n, x_n^{-1}, \dotsc, x_1^{-1})$ specialisation, the universal characters turn into the symplectic characters, hence providing a lift to the ring of symmetric functions. 
Macdonald's ninth variation plays the same role for the factorial characters. This way, the ninth variation $\sp_\lambda$ (as well as its orthogonal counterpart) is now a symmetric function and not a Laurent polynomial.

At the same time, the factorial Schur functions have a stability property similar to classical Schur polynomials: $s_\lambda \factorial{x_1, \dotsc, x_n, c_0}{c}{} = s_\lambda \factorial{x_1, \dotsc, x_n}{\tau c}{}$. Appropriately shifting the sequence $c$, Molev~\cite{Molev} defines \emph{double Schur functions}, which is another lift of factorial Schur functions to symmetric functions.
Additional relevant references include Olshanski~\cite{Olsh_2019} and Rains~\cite{Rains}.

\subsection*{Acknowledgements}

I am deeply grateful to Grigori Olshanski for his constant support and guidance.
I am indebted to Vadim Gorin for suggesting numerous edits to the text.
I also thank Darij Grinberg for pointing out several typos. 
Finally, I am grateful to the anonymous referee for their helpful comments and suggestions.

%%%%%%%%%%%%%%%%%% Section

\section{Ninth variation characters}

In this section we recall determinantal expressions of the characters of the classical Lie groups and propose definitions for their ninth variations.

\subsection{Characters of classical groups}

Let us choose the upper-triangular Borel subroups and the diagonal Cartan subgroups in each of the classical Lie groups $\GL(n, \mathbb{C})$, $\Sp(2n, \mathbb{C})$, $\SO(2n + 1, \mathbb{C})$, $\SO(2n, \mathbb{C})$. We will be considering characters as functions on the Cartan subgroups in the following coordinates:
\begin{align*}
    & \operatorname{diag} (x_1, \dotsc, x_n), &  & \text{type A}, \\
    & \operatorname{diag} (x_1, \dotsc, x_n, x_n^{-1}, \dotsc, x_1^{-1}), &  & \text{types C--D}, \\
    & \operatorname{diag} (x_1, \dotsc, x_n, 1, x_n^{-1}, \dotsc, x_1^{-1}), &  & \text{type B},
\end{align*}
where $x_1, \dotsc, x_n \in \mathbb{C}^*$.

The dominant weights of the groups $\GL(n, \mathbb{C})$, $\Sp(2n, \mathbb{C})$, $\SO(2n + 1, \mathbb{C})$, $\SO(2n, \mathbb{C})$ are the integer sequences $\lambda = (\lambda_1, \dotsc, \lambda_n) \in \mathbb{Z}^n$ such that 
\begin{align*}
    & \lambda_1 \ge \dotsb \ge \lambda_n, &  & \text{type A}, \\
    & \lambda_1 \ge \dotsb \ge \lambda_n \ge 0,&  & \text{types C--B}, \\
    & \lambda_1 \ge \dotsb \ge \lambda_{n-1} \ge |\lambda_n|, &  & \text{type D}.
\end{align*}

The dominant weights of type A are called \emph{signatures}, while the dominant weights of types C--B are \emph{partitions}. 

In the type D, if $\lambda_n \ne 0$, there are two dominant weights, $\lambda_+ = (\lambda_1, \dotsc, \lambda_{n - 1}, \lambda_n)$ and $\lambda_- = (\lambda_1, \dotsc, \lambda_{n - 1}, -\lambda_n)$, where $\lambda_n > 0$. 
When $\lambda_n = 0$, the corresponding representation is a representation of the full orthogonal group $\O(2n, \mathbb{C})$, while when $\lambda_n \ne 0$, only the direct sum of the representations corresponding to $\lambda_+$ and $\lambda_-$ is a representation of $\O(2n, \mathbb{C})$.

We denote the characters of the groups $\GL(n, \mathbb{C})$, $\Sp(2n, \mathbb{C})$, $\SO(2n + 1, \mathbb{C})$ by, respectively, 
\[
    s_\lambda (x_1, \dotsc, x_n), \quad \sp_\lambda (x_1, \dotsc, x_n), \quad \so_\lambda (x_1, \dotsc, x_n).
\]
The characters $s_\lambda (x_1, \dotsc, x_n)$ are the \emph{rational} (also called \emph{composite}) Schur functions.

In the case of $\SO(2n, \mathbb{C})$, it is more convinient to work with the character 
\[
    \o_\lambda (x_1, \dotsc, x_n)
    = 
    \begin{cases}
        \chi_{\lambda}^{\SO(2n, \mathbb{C})} & \mbox{if $\lambda_n = 0$}, \\
        \chi_{\lambda_+}^{\SO(2n, \mathbb{C})} + \chi_{\lambda_-}^{\SO(2n, \mathbb{C})} & \mbox{if $\lambda_n > 0$},
    \end{cases}
\]
where $\lambda_1 \ge \dotsb \ge \lambda_n \ge 0$.

\begin{notation*}
    Note that we only use the notation $\so_\lambda$ for the odd orthogonal groups, and $\o_\lambda$ for the even orthogonal groups.
\end{notation*}

As follows from Weyl's character formula, they can be expressed as quotients of alternants:
\begin{align}
    s_\lambda (x_1, \dotsc, x_n) 
    &= 
    \dfrac{ \det \left[ x_i^{\lambda_j + n - j} \right] }{\det \left[ x_i^{n - j} \right]}, \label{Eq: gl} \\
    \sp_\lambda (x_1, \dotsc, x_n) 
    &= 
    \dfrac{ \det \left[ x_i^{\lambda_j + n - j + 1} - \overline{x}_i^{\lambda_j + n - j + 1} \right] }{\det \left[ x_i^{n - j + 1} - \overline{x}_i^{n - j + 1} \right]}, \label{Eq: sp} \\
    \so_\lambda (x_1, \dotsc, x_n) 
    &= 
    \dfrac{ \det \left[ x_i^{\lambda_j + n - j + 1/2} - \overline{x}_i^{\lambda_j + n - j + 1/2} \right] }{\det \left[ x_i^{n - j + 1/2} - \overline{x}_i^{n - j + 1/2} \right]}, \label{Eq: so} \\
    \o_\lambda (x_1, \dotsc, x_n) 
    &= 
    \dfrac{ \eta \det \left[ x_i^{\lambda_j + n - j} + \overline{x}_i^{\lambda_j + n - j} \right] }{ \frac{1}{2} \det \left[ x_i^{n - j} + \overline{x}_i^{n - j} \right]}, 
    \quad \hbox{where} \quad \eta = 
    \begin{cases}
        \frac{1}{2} & \hbox{if $\lambda_n = 0$}, \\
        1 & \hbox{if $\lambda_n > 0$}.
    \end{cases} \label{Eq: o}
\end{align}
The determinants above are of order $n$, and $\overline{x}_i = x_i^{-1}$.

The Weyl denominator formula tells us that the denominators of~\eqref{Eq: gl}-\eqref{Eq: o} are 
\begin{align}
    \det \left[ x_i^{n - j} \right] 
    &= 
    \V(x_1, \dotsc, x_n), \label{Eq: gl denominator} \\
    \det \left[ x_i^{n - j + 1} - \overline{x}_i^{n - j + 1} \right] 
    &= 
    \prod_{i = 1}^n (x_i - \overline{x}_i) 
    \cdot 
    \V(x_1 + \overline{x}_1, \dotsc, x_n + \overline{x}_n), \label{Eq: sp denominator} \\
    \det \left[ x_i^{n - j + 1/2} - \overline{x}_i^{n - j + 1/2} \right] 
    &= 
    \prod_{i = 1}^n (x_i^{1/2} - \overline{x}_i^{1/2}) 
    \cdot 
    \V(x_1 + \overline{x}_1, \dotsc, x_n + \overline{x}_n), \label{Eq: so denominator} \\
    \frac{1}{2} \det \left[ x_i^{n - j} + \overline{x}_i^{n - j} \right] 
    &= 
    \V(x_1 + \overline{x}_1, \dotsc, x_n + \overline{x}_n), \label{Eq: o denominator}
\end{align}
where $\V(x_1, \dotsc, x_n) = \prod_{1 \le i < j \le n} (x_i - x_j)$ is the Vandermonde determinant.

\subsection{Generalised Schur functions}

Let us recall the definition of Sergeev--Veselov's generalised Schur functions in the generality of Okada~\cite{Okada}.

\begin{definition}
    Let $\mathcal{F} = ( f_n(x) )_{n \ge 0}$ be a sequence of polynomials over a field $K$ of characteristic 0. Following Okada~\cite{Okada}, we call the sequence \emph{admissible} if 
    \begin{enumerate}
        \item $\operatorname{deg} f_n(x) = n$;
        \item the polynomials are monic;
        \item $f_0(x) = 1$.
    \end{enumerate}
    Such a sequence $\mathcal{F}$ is called \emph{constant-term free} if for any $n > 0$ the polynomial $f_n(x)$ does not have a constant term.
\end{definition}

Let $\lambda$ be a partition of length $\ell(\lambda) \le n$, and $\mathcal{F}$ be an admissible sequence of polynomials. Take a sequence of indeterminates $x_1, \dotsc, x_n$. The \emph{generalised Schur function} corresponding to the partition $\lambda$ and the sequence $\mathcal{F}$ is defined by 
\begin{equation}
    s_\lambda^\mathcal{F} (x_1, \dotsc, x_n) 
    = 
    \dfrac
    {
        \det \left[ f_{\lambda_j + n - j}(x_i) \right]_{i, j = 1}^n
    }
    {
        \det \left[ f_{n - j}(x_i) \right]_{i, j = 1}^n
    }
    .
    \label{Eq: generalised Schur function}
\end{equation}
Since the polynomials $f_n$ are monic, the denominator is the usual Vandermonde determinant $\V(x_1, \dotsc, x_n)$. Hence $s_\lambda^\mathcal{F} (x_1, \dotsc, x_n)$ is a symmetric polynomial in the $x_i$'s.

\begin{example}
    Let $c = (c_n)_{n \in \mathbb{Z}}$ be a sequence of indeterminates. Taking 
    \[
        f_k(x) = \factorial{x}{c}{k} = (x - c_0) \dotsm (x - c_{k - 1}),
    \]
    we recover the factorial Schur functions $s_\lambda \factorial{ x }{ c }{}$.
\end{example}

From the general results of Nakagawa--Noumi--Shirakawa--Yamada~\cite{4authors} (see Introduction), we have the flagged Jacobi--Trudi identity
\begin{equation}\label{eq: flagged identity}
    s_\lambda^\mathcal{F} (x_1, \dotsc, x_n) 
    = 
    \det \left[ s_{(\lambda_i - i + j)}^\mathcal{F} (x_1, \dotsc, x_{n - j + 1}) \right]_{i, j = 1}^n,
\end{equation}
the flagged N\"{a}gelsbach--Kostka identity
\[
    s_\lambda^\mathcal{F} (x_1, \dotsc, x_n) 
    =
    \det \left[ s_{\left(1^{\lambda'_i - i + j}\right)}^\mathcal{F} (x_1, \dotsc, x_{n + j - 1}) \right]_{i, j = 1}^{\lambda_1},
\]
and the Giambelli identity 
\[
    s_\lambda^\mathcal{F} (x_1, \dotsc, x_n) 
    = 
    \det \left[ s_{ (\alpha_i | \beta_j) }^\mathcal{F} (x_1, \dotsc, x_n) \right]_{i, j = 1}^d,
\]
where $\lambda = (\alpha_1, \dotsc, \alpha_d | \beta_1, \dotsc, \beta_d)$ in the Frobenius notation.

\subsection{Ninth variation C--B--D characters} 

Our definition of ninth variation characters in types C--B--D is inspired by the work of Foley and King~\cite{2018a}.

\begin{definition}
    Let $\lambda = (\lambda_1, \dotsc, \lambda_n)$ be a partition of length $\ell(\lambda) \le n$, and let $\mathcal{F} = ( f_n(x) )_{n \ge 0}$ be an admissible sequence of polynomials. As before, we denote~${\overline{x}_i = x_i^{-1}}$. The \emph{ninth variation characters} are 
    \begin{align}
        \sp_\lambda^\mathcal{F} (x_1, \dotsc, x_n) 
        &= 
        \dfrac{ \det \left[ x_i f_{\lambda_j + n - j}(x_i) - \overline{x}_i f_{\lambda_j + n - j}(\overline{x}_i) \right] }{\det \left[ x_i f_{n - j}(x_i) - \overline{x}_i f_{n - j}(\overline{x}_i) \right]}, \label{Eq: sp 9th} \\
        \so_\lambda^\mathcal{F} (x_1, \dotsc, x_n) 
        &= 
        \dfrac{ \det \left[ x_i^{1/2} f_{\lambda_j + n - j}(x_i) - \overline{x}_i^{1/2} f_{\lambda_j + n - j}(\overline{x}_i) \right] }{\det \left[ x_i^{1/2} f_{n - j}(x_i) - \overline{x}_i^{1/2} f_{n - j}(\overline{x}_i) \right]}, \label{Eq: so 9th} \\
        \o_\lambda^\mathcal{F} (x_1, \dotsc, x_n) 
        &= 
        \dfrac{ \eta \det \left[ f_{\lambda_j + n - j}(x_i) + f_{\lambda_j + n - j}(\overline{x}_i) \right] }{ \frac{1}{2} \det \left[ f_{n - j}(x_i) + f_{n - j}(\overline{x}_i) \right]}, 
        \quad \hbox{where} \quad \eta = 
        \begin{cases}
            \frac{1}{2} & \hbox{if $\lambda_n = 0$}, \\
            1 & \hbox{if $\lambda_n > 0$}.
        \end{cases} \label{Eq: o 9th}
    \end{align}
    \label{Definition: ninth CBD}
\end{definition}

Since the polynomials $f_n(x)$ are monic, the denominators coincide with the Weyl denominators~\eqref{Eq: sp denominator}-\eqref{Eq: o denominator}.

\begin{example}
    If we set $f_k(x) = \factorial{x}{c}{k}$, we recover the factorial characters of Foley and King~\cite{2018a}. We denote them by $\sp_\lambda \factorial{x}{c}{}$, $\so_\lambda \factorial{x}{c}{}$ and $\o_\lambda \factorial{x}{c}{}$.
\end{example}

As in the case of generalised Schur functions, from~\cite{4authors} we have the flagged Jacobi--Trudi, the flagged N\"{a}gelsbach--Kostka, and the Giambelli identities 
\begin{gather*}
    \sp_\lambda^\mathcal{F} (x_1, \dotsc, x_n) 
    = 
    \det \left[ \sp_{(\lambda_i - i + j)}^\mathcal{F} (x_1, \dotsc, x_{n - j + 1}) \right]_{i, j = 1}^n 
    = \\
    = 
    \det \left[ \sp_{\left(1^{\lambda'_i - i + j}\right)}^\mathcal{F} (x_1, \dotsc, x_{n + j - 1}) \right]_{i, j = 1}^{\lambda_1} 
    = 
    \det \left[ \sp_{ (\alpha_i | \beta_j) }^\mathcal{F} (x_1, \dotsc, x_n) \right]_{i, j = 1}^d,
\end{gather*}
and similarly for $\so_\lambda^\mathcal{F} (x_1, \dotsc, x_n)$ and $\o_\lambda^\mathcal{F} (x_1, \dotsc, x_n)$.
In the factorial case, the flagged Jacobi--Trudi identities were proven by Foley and King~\cite{2018a} by means of certain recurrence relations.

It is also immediate to obtain dual Cauchy identities for the ninth variation characters.

\begin{proposition} \label{Prop: dual Cauchy}
    Let $\mathrm{g} \in \{ s, \sp, \so, \o \}$. Then we have
    \[
        \sum_{\lambda \subset (m^n)} 
        (-1)^{\abs{ \widetilde{\lambda} } } 
        \mathrm{g}_\lambda^\mathcal{F}(x_1, \dotsc, x_n) 
        \mathrm{g}_{\widetilde{\lambda}}^\mathcal{F}(y_1, \dotsc, y_m)
        =
        \begin{cases}
            \prod_{i = 1}^n \prod_{j = 1}^m (x_i - y_j) & \mbox{\text{if} $\mathrm{g} = s$}, \\
            \prod_{i = 1}^n \prod_{j = 1}^m (x_i + \overline{x}_i - y_j - \overline{y}_j) & \mbox{\text{if} $\mathrm{g} = \sp, \so, \o$},
        \end{cases}
    \]
    where $\widetilde{\lambda} = (n - \lambda'_m, n - \lambda'_{m - 1}, \dotsc, n - \lambda'_1)$.
\end{proposition}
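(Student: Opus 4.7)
The plan is to evaluate a single $(n+m) \times (n+m)$ determinant in two different ways: once by the Weyl-denominator formulas \eqref{Eq: gl denominator}--\eqref{Eq: o denominator} and once by a Laplace expansion over the first $n$ rows. Set $(z_1, \dotsc, z_{n+m}) = (x_1, \dotsc, x_n, y_1, \dotsc, y_m)$ and, for each $\mathrm{g} \in \{s, \sp, \so, \o\}$, form
\[
    D_{\mathrm{g}} = \det\left[ \Phi_{\mathrm{g}}\bigl(z_i, f_{n+m-j}\bigr) \right]_{i, j = 1}^{n+m},
\]
where $\Phi_{\mathrm{g}}(z, f)$ is, respectively, $f(z)$, $z f(z) - \overline{z} f(\overline{z})$, $z^{1/2} f(z) - \overline{z}^{1/2} f(\overline{z})$, or $f(z) + f(\overline{z})$. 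Since each $f_k$ is monic of degree $k$, elementary column operations reduce $D_\mathrm{g}$ to the classical $\mathrm{g}$-Weyl determinant on the $n+m$ variables $z_i$, so \eqref{Eq: gl denominator}--\eqref{Eq: o denominator} factorise $D_\mathrm{g}$ as the product of the $\mathrm{g}$-denominator in $(x_1, \dotsc, x_n)$, the $\mathrm{g}$-denominator in $(y_1, \dotsc, y_m)$, and the right-hand side claimed in the proposition, with a single extra factor $2$ in the $\o$-case arising from the $\tfrac{1}{2}$ in \eqref{Eq: o denominator}.

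Next, I expand $D_\mathrm{g}$ along its first $n$ rows by Laplace. The $n$-subsets $J = \{j_1 < \dotsb < j_n\} \subset \{1, \dotsc, n+m\}$ are in bijection with partitions $\lambda \subset (m^n)$ via $\lambda_k = m - j_k + k$, and under the standard rectangle-complement bijection the complementary $m$-subset $J^c$ corresponds to $\widetilde\lambda$ as defined in the statement. The top $n \times n$ minor on $J$ is then already in the correct column order to coincide with the numerator of $\mathrm{g}_\lambda^\mathcal{F}(x_1, \dotsc, x_n)$, and the bottom $m \times m$ minor on $J^c$ with the numerator of $\mathrm{g}_{\widetilde\lambda}^\mathcal{F}(y_1, \dotsc, y_m)$. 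The Laplace sign $(-1)^{n(n+1)/2 + j_1 + \dotsb + j_n}$ simplifies, via the identity $\sum j_k = nm + n(n+1)/2 - \abs{\lambda}$, to $(-1)^{nm - \abs{\lambda}} = (-1)^{\abs{\widetilde\lambda}}$, matching the summand in the proposition. Dividing both expressions for $D_\mathrm{g}$ by the two Weyl denominators then yields the identity for $\mathrm{g} \in \{s, \sp, \so\}$ directly.

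The main subtlety appears in the $\o$-case, where the definition of $\o_\lambda^\mathcal{F}$ carries the factor $\eta_\lambda$ in the numerator and $\tfrac{1}{2}$ in the denominator, so each Laplace minor produces an additional factor $1/\eta$. The key observation is that for every $\lambda \subset (m^n)$ exactly one of $\lambda_n$ and $\widetilde\lambda_m$ vanishes: if $\lambda_n > 0$ then $\lambda'_1 = n$ and hence $\widetilde\lambda_m = n - \lambda'_1 = 0$, and vice versa. Therefore $\eta_\lambda \eta_{\widetilde\lambda} = \tfrac{1}{2}$ uniformly, and the resulting overall factor of $2$ absorbs the extra $2$ coming from \eqref{Eq: o denominator}, leaving the clean identity. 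The principal obstacle throughout is this careful bookkeeping of signs and of the $\eta$-factors; the half-integer exponents of the $\so$-case introduce no new difficulty, because \eqref{Eq: so denominator} delivers compatible prefactors $\prod_i (x_i^{1/2} - \overline{x}_i^{1/2})$ and $\prod_j (y_j^{1/2} - \overline{y}_j^{1/2})$ on both sides of the comparison.
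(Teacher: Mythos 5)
Your proof is correct and follows essentially the same route as the paper: form the $(n+m)\times(n+m)$ generalised Weyl denominator in the combined variables, identify it with the classical one by monicity, and Laplace-expand along the first $n$ rows. The paper only writes out the type A case and says the rest is similar; your sign computation $(-1)^{nm-\abs{\lambda}}=(-1)^{\abs{\widetilde\lambda}}$ and the observation that exactly one of $\lambda_n$, $\widetilde\lambda_m$ vanishes (so $\eta_\lambda\eta_{\widetilde\lambda}=\tfrac12$ cancels the extra $2$ from \eqref{Eq: o denominator}) correctly supply the details the paper leaves implicit.
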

\begin{proof}
    In the type A, consider the determinant 
    \[
        \operatorname{Denom}_{n + m}^\mathcal{F} (x_1, \dotsc, x_n, y_1, \dotsc, y_m) 
        = 
        \left[
        \begin{array}{cccc}
        f_{n + m - 1} (x_1) & f_{n + m - 2} (x_1) & \cdots & f_0(x_1) \\
        \vdots & \vdots & & \vdots \\
        f_{n + m - 1} (x_n) & f_{n + m - 2} (x_n) & \cdots & f_0(x_n) \\
        f_{n + m - 1} (y_1) & f_{n + m - 2} (y_1) & \cdots & f_0(y_1) \\
        \vdots & \vdots & & \vdots \\
        f_{n + m - 1} (y_m) & f_{n + m - 2} (y_m) & \cdots & f_0(y_m)
        \end{array}
        \right].
    \]
    This is the type A Weyl denominator corresponding to the sequence $\mathcal{F}$. Since the sequence is admissible, the determinant is the same as for $\mathcal{F} = (x^n)_{n \ge 0}$.
    Hence the ratio 
    \[
        \dfrac
        {
            \operatorname{Denom}_{n + m}^\mathcal{F} (x_1, \dotsc, x_n, y_1, \dotsc, y_m)
        }
        {
            \operatorname{Denom}_{n}^\mathcal{F} (x_1, \dotsc, x_n) 
            \operatorname{Denom}_{m}^\mathcal{F} (y_1, \dotsc, y_m)
        }
    \]
    is equal to $\prod_{i = 1}^n \prod_{j = 1}^m (x_i - y_j)$.
    On the other hand, Laplace expansion of the determinant $\operatorname{Denom}_{n + m}^\mathcal{F} (x_1, \dotsc, x_n, y_1, \dotsc, y_m)$ along the first $n$ and last $m$ rows tells us that the ratio is equal to
    \[
        \sum_{\lambda \subset (m^n)} 
        (-1)^{\abs{ \widetilde{\lambda} } } 
        s_\lambda^\mathcal{F}(x_1, \dotsc, x_n) 
        s_{\widetilde{\lambda}}^\mathcal{F}(y_1, \dotsc, y_m).
    \]
    This proves the type A case, and all the other cases are dealt with similarly.
\end{proof}

\begin{remark}
    There is also a dual Cauchy identity for the Nakagawa--Noumi--Shirakawa--Yamada version of Macdonald's ninth variation, due to Noumi, see~\cite[(4.10)]{Zeta}.
\end{remark}

\begin{remark}
    The very same argument allows one to obtain dual versions of the Littlewood identities, see Sundaram~\cite[Theorem~4.9]{Sundaram}, i.e.~for each $\mathrm{g} \in \{ \sp, \so, \o \}$ there is a closed formula for the sum
    \[
        \sum_{\lambda \subset (m^n)} 
        (-1)^{\abs{ \widetilde{\lambda} } } 
        \mathrm{g}_\lambda^\mathcal{F} (x_1, \dotsc, x_n) 
        s_{\widetilde{\lambda}}^\mathcal{F} (y_1, \dotsc, y_m).
    \]
\end{remark}

\begin{remark}
    Macdonald~\cite[(6.19)]{Macdonald} shows that the infinite-variable limit of the factorial Schur function $s_\lambda \factorial{x_1, \dotsc, x_n}{c}{}$ is the supersymmetric Schur function $s_\lambda(x / -c)$ defined by 
    \[
        s_\lambda(x / -c) 
        = 
        \sum_{\mu \subset \lambda}
        s_\mu(x) s_{\lambda'/\mu'} (-c) \in \Lambda(x) \otimes \Lambda(c),
    \]
    where $\Lambda(x)$ and $\Lambda(c)$ are the rings of symmetric functions in variables $x = (x_1, x_2, \dotsc)$ and $c = (c_0, c_1, c_2, \dotsc)$.

    The same argument as in~\cite[(6.19)]{Macdonald} shows that the limit of the symplectic factorial character $\sp_\lambda \factorial{x}{c}{}$ is 
    \[
        \sum_{\mu \subset \lambda}
        \sp_\mu(x) s_{\lambda'/\mu'} (-c) \in \Lambda(x) \otimes \Lambda(c),
    \]
    where $\sp_\mu(x)$ is the universal symplectic character (see~\cite{KoikeTerada}). This expression is the infinite-variable limit of the orthosymplectic character $\operatorname{sc}_\lambda$ introduced by Benkart--Shader--Ram~\cite[Section 4]{Benkart}, studied later in~\cite{Stokke1},~\cite{Stokke2},~\cite{Stokke3}. The orthosymplectic characters $\operatorname{sc}_\lambda$ describe irreducible submodules in small tensor powers of the tautological representation of the orthosymplectic Lie algebra.

    Similarly, the limit of the orthogonal factorial characters is 
    \[
        \sum_{\mu \subset \lambda}
        \o_\mu(x) s_{\lambda'/\mu'} (-c) \in \Lambda(x) \otimes \Lambda(c),
    \]
    where $\o_\mu(x)$ is the universal orthogonal character.
\end{remark}

\subsection{Ninth variation type A characters} \label{Subsection: ninth variation type A}

Here we define certain ninth variation analogues of the rational Schur functions. In full generality these analogues are not rational, so we call them simply \emph{ninth variation type A characters}.

Let us first define the factorial Schur functions corresponding to signatures, this will motivate the general definition. For this we need to define the factorial powers $\factorial{x}{c}{n}$ for negative $n$.
Recall that we have the doubly infinite sequence $c = (c_n)_{n \in \mathbb{Z}}$ and the shift operators $\tau^r$ for each $r \in \mathbb{Z}$ defined by 
\[
    (\tau^r c)_n = c_{n + r}.
\]
It is clear that 
\[
    \factorial{x}{c}{r + s} = \factorial{x}{c}{r} \factorial{x}{ \tau^r c}{s}
\]
for all $r, s \ge 0$. Consequently, for a partition $\lambda$ of length $\ell(\lambda) \le n$ we have 
\[
    s_{\lambda + (1^n)} \factorial{ x }{ c }{} 
    = 
    \prod_{i = 1}^n \factorial{ x_i }{ c }{ 1 } \cdot s_\lambda \factorial{ x }{ \tau c }{} 
    = 
    s_{(1^n)} \factorial{ x }{ c }{} \cdot s_\lambda \factorial{ x }{ \tau c }{}.
\]
In order to preserve this property we set 
\begin{equation}
    \factorial{x}{c}{-n} = \dfrac{ 1 }{ (x - c_{-1}) \dotsm (x - c_{-n}) }
    \label{Eq: negative factorials}
\end{equation}
for $n > 0$. Then, indeed, 
\[
    \factorial{x}{c}{r + s} = \factorial{x}{c}{r} \factorial{x}{ \tau^r c}{s}
\]
for all $r, s \in \mathbb{Z}$. Now, given a signature $\lambda = (\lambda_1, \dotsc, \lambda_n)$ of length $\ell(\lambda) \le n$, we define the corresponding \emph{factorial Schur function} as 
\[
    s_\lambda \factorial{ x }{ c }{} = \dfrac{ \det \left[ \factorial{ x_i }{ c }{ \lambda_j + n - j } \right]_{i, j = 1}^n }{ \det \left[ \factorial{ x_i }{ c }{ n - j } \right]_{i, j = 1}^n }.
\]
The identity 
\[
    s_{\lambda + (1^n)} \factorial{ x }{ c }{} 
    = 
    s_{(1^n)} \factorial{ x }{ c }{} \cdot s_\lambda \factorial{ x }{ \tau c }{}
\]
holds true for any signature $\lambda$.
We note that~\eqref{Eq: negative factorials} is not new: it has already appeared in, for example, Jing--Rozhkovskaya~\cite[(5.3) and the unnumbered formula below (5.3)]{JingRozhkovskaya}.

\begin{observation*}
    Notice that $\factorial{x}{c}{-n}$ is a power series in $x^{-1}$ of order $n$ (recall that the \emph{order} of a formal power series $\sum_{k = 0}^\infty a_k y^k$ is the smallest integer $n$ such that $a_n \ne 0$).
\end{observation*}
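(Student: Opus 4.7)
The plan is to verify the claim by a direct geometric-series expansion of each factor. By definition,
\[
    \factorial{x}{c}{-n} = \prod_{k = 1}^{n} \frac{1}{x - c_{-k}},
\]
so it suffices to analyse each factor individually and then multiply.

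First, I would rewrite a single factor as
\[
    \frac{1}{x - c_{-k}} = \frac{x^{-1}}{1 - c_{-k} x^{-1}} = x^{-1} \sum_{j \ge 0} c_{-k}^{\,j} x^{-j},
\]
which is a formal power series in $x^{-1}$ whose lowest-degree term is $x^{-1}$ with coefficient $1$. In particular each such factor has order exactly $1$ as a power series in $x^{-1}$.

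Next I would invoke the elementary fact that the product of power series of orders $m_1, \dotsc, m_n$ is a power series of order $m_1 + \dotsb + m_n$, with leading coefficient equal to the product of the leading coefficients. Applying this to our $n$ factors of order $1$ each with leading coefficient $1$, the product $\factorial{x}{c}{-n}$ is a power series in $x^{-1}$ of order $n$ whose coefficient of $x^{-n}$ equals $1 \ne 0$. This establishes the observation.

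There is no real obstacle here: the only mild point worth noting is that the expansion requires working in the ring of formal Laurent series (or formal power series in $x^{-1}$), as opposed to formal power series in $x$, since the denominators are not units in $K[[x]]$; but $(1 - c_{-k} x^{-1})^{-1}$ makes perfect sense in $K[c_{-1}, \dotsc, c_{-n}][[x^{-1}]]$, which is the natural ambient ring for this statement.
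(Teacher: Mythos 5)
Your proof is correct and matches the paper's (implicit) argument: the paper treats this as immediate, and in the later example computing the double dual it rewrites $\factorial{x}{c}{-n}$ as $x^{-n}/\bigl((1 - x^{-1}c_{-1}) \dotsm (1 - x^{-1}c_{-n})\bigr)$, which is exactly your factor-by-factor geometric expansion with the leading coefficient $1$ made explicit. Nothing is missing.
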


Let us now treat the general case. 
Let $\mathcal{F} = (f_n(x))_{n \ge 0}$ be an admissible sequence of polynomials. We extend it to a doubly infinite sequence $\mathcal{F} = (f_n(x))_{n \in \mathbb{Z}}$ requiring $f_{-n}(x)$ to be a formal power series in $x^{-1}$ of order $n$, where $n > 0$. That is, for all $n > 0$, we set 
\[
    f_{-n}(x) = \sum_{k = n}^{\infty} a_{n, k} x^{-k}, \quad \text{where} \quad a_{n, n} \ne 0.
\]

Given a signature $\lambda = (\lambda_1, \dotsc, \lambda_n)$ of length $\ell(\lambda) \le n$, we define the \emph{ninth variation type A character} associated with the sequence $\mathcal{F} = (f_n(x))_{n \in \mathbb{Z}}$ by
\begin{equation}
    s_\lambda^\mathcal{F} (x_1, \dotsc, x_n) 
    = 
    \dfrac{ \det\left[ f_{\lambda_j + n - j}(x_i) \right]_{i, j = 1}^n }{\det\left[ f_{n - j}(x_i) \right]_{i, j = 1}^n }.
    \label{Eq: 9th variation type A}
\end{equation}
As before, the denominator is the Vandermonde determinant.

\begin{remark}
    Just like when $\lambda$ is a partition, there is the following flagged Jacobi--Trudi formula for the ninth variation type A characters:
    \[
        s_\lambda^\mathcal{F} (x_1, \dotsc, x_n) 
    = 
    \det \left[ s_{(\lambda_i - i + j)}^\mathcal{F} (x_1, \dotsc, x_{n - j + 1}) \right]_{i, j = 1}^n,
    \]
    where $\lambda$ is an arbitrary signature. 
    This formula is not stated explicitly in~\cite{4authors}, but their arguments can be easily modified to cover signatures, as the authors remark at the bottom of p. 181 in~\cite{4authors}. An identity of this type appeared in Borodin--Olshanski~\cite[Proposition 6.2]{borodin2011boundary}.
\end{remark}

\begin{remark}
    Foley and King~\cite{2018a} give combinatorial formulas for the factorial characters $\sp_\lambda \factorial{x}{c}{}$, $\so_\lambda \factorial{x}{c}{}$, $\o_\lambda \factorial{x}{c}{}$. One can also prove the following combinatorial formula for the factorial Schur function $s_\lambda \factorial{x}{c}{}$ associated with a signature~$\lambda$:
    \[
        s_\lambda \factorial{x}{c}{} 
        = 
        \sum_{G \in \GTP(\lambda)}
        \prod_{i = 1}^n \prod_{j = 1}^i
        \factorial{x_i}{\tau^{i - j + 1 + G_{i - 1, j}} c}{G_{i, j} - G_{i - 1, j}},
    \]
    where $\GTP(\lambda)$ is the set of all integral Gelfand--Tsetlin patterns with top row $\lambda$, and the entries of a pattern are enumerated as 
    \begin{equation*}
        G = \left(
        \begin{array}{ccccccccccccc}
         G_{n1} && G_{n2} && \cdots && G_{n,n-1} && G_{nn} \\
        & \cdots && \cdots && \cdots && \cdots \\
        && G_{31} && G_{32} && G_{33} \\
        &&& G_{21} && G_{22} \\
        &&&& G_{11} \\
        \end{array}
        \right).
    \end{equation*}
    That is, the entries are (not necessarily nonnegative) integers satisfying the inequalities $ G_{i + 1, j} \ge G_{ij} \ge G_{i + 1, j + 1} $ for all $i, j = 1, \dotsc, n$, and in the top row we put $G_{nj} = \lambda_j$ for each $j = 1, \dotsc, n$.
\end{remark}

%%%%%%%%%%%%%%%%%% Section

\section{Cauchy identity}

In this section we state a ninth variation of the classical Cauchy identity.

\subsection{Dual sequences}

Consider the following non-degenerate bilinear pairing between the algebras $K[x]$ and $K[[u]]$:
\[
    \langle x^n, u^m \rangle = \delta_{n m}.
\]

\begin{remark}
    Our pairing is slightly different to that of Okada~\cite{Okada}, since it gives us Cauchy-type determinants, while Okada is working with Pfaffians instead.
\end{remark}

\begin{lemma}
    Let $\mathcal{F} = (f_n(x))_{n \ge 0}$ be an admissible sequence of polynomials. Then 
    \begin{enumerate}
        \item If $\widehat{\mathcal{F}} = (\widehat f_n(u))_{n \ge 0}$ is a sequence of formal power series in $u$ such that $\widehat f_n(u)$ has order $n$ for all $n \ge 0$, then 
        \[
            \langle f_n(x), \widehat f_m(u) \rangle = \delta_{nm} \quad \textup{if and only if} \quad \sum_{n = 0}^\infty f_n(x) \widehat f_n(u) = \dfrac{1}{1 - xu}.
        \]

        \item There exists a unique sequence $\widehat{\mathcal{F}}$ satisfying the equivalent conditions of~(1). We call such a sequence $\widehat{\mathcal{F}}$ the \emph{dual} of $\mathcal{F}$.

        \item If $\widehat{\mathcal{F}}$ is the dual of $\mathcal{F}$, then $\mathcal{F}$ is constant-term free if and only if $\widehat{f}_0 (u) = 1$.
    \end{enumerate}
    Conversely, given a sequence of formal power series $\widehat{\mathcal{F}} = (\widehat f_n(u))_{n \ge 0}$ such that $\widehat f_n(u)$ has order $n$ for all $n \ge 0$, there exists a unique sequence of polynomials $(f_n(x))_{n \ge 0}$ such that $f_n(x)$ has degree $n$ for all $n \ge 0$ and $\langle f_n(x), \widehat f_m(u) \rangle = \delta_{nm}$.
    \label{Lemma: dual sequences}
\end{lemma}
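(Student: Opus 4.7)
The plan is to expand everything in the monomial bases so that the statement becomes a claim about infinite matrices. Write $f_n(x) = \sum_{k \ge 0} A_{nk} x^k$ and $\widehat{f}_n(u) = \sum_{k \ge 0} B_{nk} u^k$: admissibility forces the matrix $A = (A_{nk})$ to be lower triangular with $A_{nn} = 1$, while the assumption that $\widehat{f}_n$ has order $n$ forces $B = (B_{nk})$ to be upper triangular with $B_{nn} \ne 0$. Under these hypotheses the identity $\langle f_n, \widehat{f}_m \rangle = \delta_{nm}$ unpacks to $AB^T = I$, whereas $\sum_n f_n(x) \widehat{f}_n(u) = (1-xu)^{-1}$ unpacks to $A^T B = I$ (using that $(1-xu)^{-1} = \sum_{k \ge 0} x^k u^k$). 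The triangularity conditions guarantee that, for each fixed pair of indices, the infinite sums defining these matrix products are finite, so the equations make sense in the formal-series setting.

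For~(1), I would note that $A$ is unitriangular and therefore invertible as an infinite matrix, its inverse computable by back-substitution. Hence $AB^T = I$ is equivalent to $B^T = A^{-1}$, which after transposition is equivalent to $A^T B = I$. For~(2), the same calculation gives the unique dual sequence $B = (A^{-1})^T$; since $A^{-1}$ is itself lower unitriangular, $B$ is upper unitriangular, which in particular reconfirms that $\widehat{f}_n(u)$ has order exactly $n$ with leading coefficient $1$.

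For~(3), I plan to chase the first column of $A$ and the first row of $B$. The sequence $\mathcal{F}$ is constant-term free exactly when $A_{n0} = \delta_{n0}$, i.e.~when $A e_0 = e_0$ for $e_0$ the first standard basis vector. Applying $A^{-1}$ shows this is equivalent to $A^{-1} e_0 = e_0$, which is precisely the condition that the first row of $B = (A^{-1})^T$ equals $(1, 0, 0, \dotsc)$, i.e.~$\widehat{f}_0(u) = 1$. The converse is the same argument reversed.

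The main subtlety, rather than a true obstacle, is bookkeeping: one must check at each step that the infinite sums being manipulated are locally finite as formal series. This is automatic from the triangularity of $A$ and $B$, and the whole proof essentially reduces to the standard fact that lower unitriangular infinite matrices form a group under multiplication, with inverses given by the expected formulas.
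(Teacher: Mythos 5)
Your proof is correct, and it is essentially the standard argument: the paper gives no proof of its own here, deferring to Okada's Lemma~3.1, whose argument is the same expansion in monomial bases with the duality reduced to inverting a unitriangular transition matrix. The local-finiteness checks you flag are exactly the right points to verify, and your handling of them via the triangularity of $A$ and $B$ is sound.
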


\begin{proof}
    The proof is the same as in~\cite[Lemma 3.1]{Okada}. 
\end{proof}

\begin{example}
    The dual of the sequence $f_k(x) = \factorial{x}{c}{k}$ is 
    \[
        \widehat f_k(u) = \dfrac{u^k}{ \prod_{l = 0}^k (1 - u c_l) }.
    \]
    This can be deduced from the identity
    \[
        \sum_{m = 0}^\infty \dfrac{ \factorial{x}{c}{m} }{\factorial{u}{c}{m + 1}} 
        = 
        \dfrac{1}{u - x},
    \]
    the proof of which is the same as in Okounkov--Olshanski~\cite[Theorem 12.1, first proof]{shifted}.
\end{example}

\subsection{Dual Schur functions}

Let $\mathcal{F} = (f_n(x))_{n \ge 0}$ be an admissible sequence of polynomials, and let $\widehat{\mathcal{F}} = (\widehat f_n(u))_{n \ge 0}$ be its dual.

\begin{definition}
    Given a partition $\lambda$ of length $\ell(\lambda) \le n$, the \emph{dual Schur function} is defined by
    \begin{equation}
        \widehat{s}_\lambda^\mathcal{F} (u_1, \dotsc, u_n) 
        = 
        \dfrac{ \det\left[ \widehat{f}_{\lambda_j + n - j}(u_i) \right]_{i, j = 1}^n }{ \V(u_1, \dotsc, u_n) },
        \label{Eq: dual Schur function}
    \end{equation}
    where $\V(u_1, \dotsc, u_n) = \prod_{1 \le i < j \le n} (u_i - u_j)$ is the Vandermonde determinant.
\end{definition}

Note that we \emph{require} the determinant to be the Vandermonde determinant. Thus, generally, the dual Schur function $\widehat{s}_\lambda^\mathcal{F} (u_1, \dotsc, u_n)$ does not coincide with $s_\lambda^{\widehat{\mathcal{F}}} (u_1, \dotsc, u_n)$, which is the generalised Schur function associated with the (non-polynomial) sequence $\widehat{\mathcal{F}}$.

\begin{example}
    In the factorial case, the dual Schur function takes the form 
    \begin{equation}
        \widehat{s}_\lambda \factorial{u}{c}{} 
        = 
        \dfrac
        {
            \det \left[ \dfrac{u_i^{\lambda_j + n - j}}{ \prod_{l = 0}^{\lambda_j + n - j} (1 - u_i c_l) } \right]
        }
        {
            \V(u_1, \dotsc, u_n)
        }
        .
        \label{Eq: dual factorial Schur function}
    \end{equation}
\end{example}

\subsection{Cauchy identity}
The functions $s_\lambda^\mathcal{F}(x_1, \dotsc, x_n)$ satisfy an analogue of the Cauchy identity. Its proof is based on an argument due to Ishikawa and Wakayama (see~\cite[Section 8]{Szego}). The key fact is the following lemma.

\begin{lemma}
    Let $\mathbf{f}_m (x)$ and $\mathbf{g}_m (u)$ be two sequences of functions, with $m \ge 0$. Let $x_1, \dotsc, x_n$ and $u_1, \dotsc, u_n$ be indeterminates. Then the following identity holds:
    \begin{equation}
        \sum_{\substack{\lambda \, \textup{partition} \\ \ell(\lambda) \le n}} 
        \det \left[ \mathbf{f}_{\lambda_j + n - j}(x_i) \right] \cdot \det \left[ \mathbf{g}_{\lambda_j + n - j}(u_i) \right] 
        = 
        \det \left[ \mathbf{h}(i, j) \right],
        \label{Eq: Cauchy--Binet}
    \end{equation}
    where $\mathbf{h}(i, j) = \sum_{m = 0}^\infty \mathbf{f}_m(x_i) \mathbf{g}_m(u_j)$ and all the determinants are $n \times n$.
    \label{Cauchy--Binet}
\end{lemma}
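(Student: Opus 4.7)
The plan is to realize the identity as an instance of the Cauchy--Binet formula applied to the product of two semi-infinite matrices. Namely, I would define the $n \times \mathbb{Z}_{\geq 0}$ matrix $A = (A_{im})$ with $A_{im} = \mathbf{f}_m(x_i)$ and the $\mathbb{Z}_{\geq 0} \times n$ matrix $B = (B_{mj})$ with $B_{mj} = \mathbf{g}_m(u_j)$. Then by definition the $(i,j)$-entry of the formal product $AB$ is precisely $\mathbf{h}(i,j)$, so the right-hand side of~\eqref{Eq: Cauchy--Binet} is $\det(AB)$.

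Next, I would apply Cauchy--Binet to this product:
\[
    \det(AB) = \sum_{S} \det(A_S) \det(B_S),
\]
where $S$ ranges over all $n$-element subsets of $\mathbb{Z}_{\geq 0}$, $A_S$ is the $n \times n$ submatrix of $A$ on the columns indexed by $S$, and $B_S$ is the $n \times n$ submatrix of $B$ on the rows indexed by $S$. Every such subset has a unique presentation $S = \{\mu_n < \mu_{n-1} < \dotsb < \mu_1\}$, and the classical bijection $\mu_j = \lambda_j + n - j$ identifies such strictly decreasing sequences with partitions $\lambda = (\lambda_1 \geq \dotsb \geq \lambda_n \geq 0)$ of length at most $n$. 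Ordering both the columns of $A_S$ and the rows of $B_S$ as $\mu_1 > \mu_2 > \dotsb > \mu_n$, the two minors become $\det[\mathbf{f}_{\lambda_j + n - j}(x_i)]_{i,j=1}^n$ and $\det[\mathbf{g}_{\lambda_j + n - j}(u_i)]_{i,j=1}^n$ respectively, which is exactly the $\lambda$-summand on the left-hand side.

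The main subtlety is that Cauchy--Binet, as usually stated, is a theorem about finite matrices, whereas here the inner dimension is infinite; thus one has to verify that the manipulation is legitimate. To bypass this, I would give a direct formal derivation: expand
\[
    \det[\mathbf{h}(i,j)]_{i,j=1}^n = \sum_\sigma \sgn(\sigma) \prod_{i=1}^n \mathbf{h}(i, \sigma(i)),
\]
substitute each $\mathbf{h}(i,\sigma(i)) = \sum_{m_i \ge 0} \mathbf{f}_{m_i}(x_i)\mathbf{g}_{m_i}(u_{\sigma(i)})$, and interchange the summations (which is legitimate as a formal identity, since each coefficient in the expansion is a finite sum). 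Tuples $(m_1, \dotsc, m_n) \in \mathbb{Z}_{\geq 0}^n$ containing a repeated value contribute zero after the alternating sum over $\sigma$, and the remaining contributions, grouped by their underlying set $S = \{m_1, \dotsc, m_n\}$, reproduce the Cauchy--Binet expansion displayed above. This is essentially the argument of Ishikawa--Wakayama referenced in~\cite[Section 8]{Szego}, and it goes through verbatim in the present setting.
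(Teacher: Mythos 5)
Your proposal is correct and follows essentially the same route as the paper: both realise the right-hand side as the determinant of a product of semi-infinite matrices and apply the Cauchy--Binet expansion over $n$-element column sets, identified with partitions via $l_j = \lambda_j + n - j$. Your additional direct verification of the infinite-inner-dimension Cauchy--Binet step (expanding $\det[\mathbf{h}(i,j)]$ over $S_n$, interchanging the formal sums, and discarding tuples with repeated indices) is a sound justification of a point the paper leaves implicit.
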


\begin{proof}
    Consider the following two infinite $n \times \infty$ matrices:
    \begin{align*}
        A = 
        \left[\begin{array}{cccc}
        \cdots & \mathbf{f}_2(x_1) & \mathbf{f}_1(x_1) & \mathbf{f}_0(x_1) \\
         & \vdots & \vdots & \vdots \\
        \cdots & \mathbf{f}_2(x_n) & \mathbf{f}_1(x_n) & \mathbf{f}_0(x_n)
        \end{array}\right], \\
        B = 
        \left[\begin{array}{ccccc}
        \cdots & \mathbf{g}_2(u_1) & \mathbf{g}_1(u_1) & \mathbf{g}_0(u_1) \\
         & \vdots & \vdots & \vdots   \\
        \cdots & \mathbf{g}_2(u_n) & \mathbf{g}_1(u_n) & \mathbf{g}_0(u_n)
        \end{array}\right].
    \end{align*}
    We number their columns by nonnegative integers in reverse order: $\dotsc, 2, 1, 0$. Let us calculate the determinant of the $n \times n$ matrix $AB^t$.

    On the one hand, the $ij$-th entry of $AB^t$ is $\mathbf{h}(i, j) = \sum_{m = 0}^\infty \mathbf{f}_m(x_i) \mathbf{g}_m(u_j)$, so $\det AB^t$ is the right-hand side of~\eqref{Eq: Cauchy--Binet}.
    On the other hand, by the Cauchy--Binet identity, 
    \[
        \det A B^t = \sum_{+ \infty > l_1 > \dotsb > l_n \ge 0} \det A_{l_1, \dotsc, l_n} \det B_{l_1, \dotsc, l_n},
    \]
    where $A_{l_1, \dotsc, l_n}$ and $B_{l_1, \dotsc, l_n}$ are the submatrices of $A$ and $B$ formed by the columns $l_1, \dotsc, l_n$. Writing $ (l_1, \dotsc, l_n) = (\lambda_1 + n - 1, \dotsc, \lambda_n) $ for a partition $\lambda$, we arrive at the left-hand side of~\eqref{Eq: Cauchy--Binet}.
    This proves the lemma.
\end{proof}

Let us apply the lemma to obtain the Cauchy identity.

\begin{proposition}
    We have
    \[
        \sum_{\substack{\lambda \, \textup{partition} \\ \ell(\lambda) \le n}}
        s_\lambda^\mathcal{F} (x_1, \dotsc, x_n) 
        \widehat{s}_\lambda^\mathcal{F} (u_1, \dotsc, u_n) 
        =
        \dfrac
        {
            1
        }
        {
            \prod_{i, j = 1}^{n} (1 - x_i u_j) 
        }
        .
    \]
    This is an identity in the ring of formal power series in $u_1, \dotsc, u_n$ with coefficients in the ring of symmetric polynomials in $x_1, \dotsc, x_n$.
\end{proposition}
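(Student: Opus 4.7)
The plan is to reduce the Cauchy identity to a composition of two well-known facts: the Ishikawa--Wakayama lemma (Lemma~\ref{Cauchy--Binet}) and the classical Cauchy determinant evaluation. Every ingredient is already on the table.

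First, I would unfold the definitions. By~\eqref{Eq: generalised Schur function} and~\eqref{Eq: dual Schur function},
\[
    s_\lambda^\mathcal{F}(x_1, \dotsc, x_n) \, \widehat{s}_\lambda^\mathcal{F}(u_1, \dotsc, u_n)
    =
    \dfrac{\det\left[f_{\lambda_j + n - j}(x_i)\right] \det\left[\widehat{f}_{\lambda_j + n - j}(u_i)\right]}{\V(x_1, \dotsc, x_n) \, \V(u_1, \dotsc, u_n)},
\]
so that the left-hand side of the proposition equals the sum over partitions of the numerator, divided by the product of the two Vandermondes.

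Next, apply Lemma~\ref{Cauchy--Binet} to the sequences $\mathbf{f}_m(x) = f_m(x)$ and $\mathbf{g}_m(u) = \widehat{f}_m(u)$. The entries of the resulting $n \times n$ matrix are
\[
    \mathbf{h}(i, j) = \sum_{m = 0}^\infty f_m(x_i) \, \widehat{f}_m(u_j) = \dfrac{1}{1 - x_i u_j},
\]
where the second equality is precisely the defining property of the dual sequence from Lemma~\ref{Lemma: dual sequences}(1). Hence
\[
    \sum_{\substack{\lambda \, \textup{partition} \\ \ell(\lambda) \le n}}
    \det\left[f_{\lambda_j + n - j}(x_i)\right] \det\left[\widehat{f}_{\lambda_j + n - j}(u_i)\right]
    =
    \det\left[\dfrac{1}{1 - x_i u_j}\right]_{i, j = 1}^n.
\]

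To conclude, I would invoke the classical Cauchy determinant identity
\[
    \det\left[\dfrac{1}{1 - x_i u_j}\right]_{i, j = 1}^n
    =
    \dfrac{\V(x_1, \dotsc, x_n) \, \V(u_1, \dotsc, u_n)}{\prod_{i, j = 1}^n (1 - x_i u_j)},
\]
and divide both sides by $\V(x_1, \dotsc, x_n) \, \V(u_1, \dotsc, u_n)$ to match the first display; the Vandermondes cancel, leaving exactly the desired right-hand side. The only nontrivial convergence point is that the infinite sum defining $\mathbf{h}(i,j)$ should be understood as an identity of formal power series in the $u_j$'s with polynomial coefficients in the $x_i$'s, which is the setting stated in the proposition; since $\widehat{f}_m(u)$ has order $m$, the double sum over $\lambda$ is well defined in that ring.

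I expect no serious obstacle: the admissibility of $\mathcal{F}$ guarantees that the bialternant definitions make sense, Lemma~\ref{Cauchy--Binet} is stated with exactly the level of generality we need, and the Cauchy determinant is standard. The only thing to check carefully is the book-keeping of formal power series orders to legitimize the interchange of summation implicit in Lemma~\ref{Cauchy--Binet}, but this is immediate from $\operatorname{ord}\widehat{f}_m = m$.
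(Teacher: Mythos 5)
Your proposal is correct and follows exactly the paper's own argument: apply Lemma~\ref{Cauchy--Binet} with $\mathbf{f}_m = f_m$ and $\mathbf{g}_m = \widehat{f}_m$, use the duality to identify $\mathbf{h}(i,j) = 1/(1 - x_i u_j)$, evaluate the Cauchy determinant, and divide by the two Vandermondes. No differences worth noting.
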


\begin{proof}
    Set $\mathbf{f}_m (x) = f_m(x)$ and $\mathbf{g}_m (x) = \widehat f_m(u)$. Then 
    \[
        \mathbf{h}(i, j) 
        = 
        \sum_{n = 0}^\infty f_n(x_i) \widehat f_n(u_j) = \dfrac{1}{1 - x_i u_j},
    \]
    and, by Lemma~\ref{Cauchy--Binet}, we have 
    \begin{equation}
        \sum_{\substack{\lambda \, \textup{partition} \\ \ell(\lambda) \le n}} 
        \det \left[ f_{\lambda_j + n - j}(x_i) \right] \cdot \det \left[ \widehat f_{\lambda_j + n - j}(u_i) \right] 
        = 
        \det \left[ \dfrac{1}{1 - x_i u_j} \right].
        \label{Eq: almost Cauchy for partitions}
    \end{equation}
    The determinant on the right is the Cauchy determinant, which is equal to 
    \[
        \det \left[ \dfrac{1}{1 - x_i u_j} \right] 
        = 
        \dfrac
        {
            \V(x_1, \dotsc, x_n) \V(u_1, \dotsc, u_n)
        }
        {
            \prod_{i, j = 1}^{n} (1 - x_i u_j) 
        }.
    \]
    Hence, dividing~\eqref{Eq: almost Cauchy for partitions} by the Vandermondes $\V(x_1, \dotsc, x_n) \V(u_1, \dotsc, u_n)$, we arrive at the desired identity.
\end{proof}

%%%%%%%%%%%%%%%%%% Section

\section{Littlewood identities}

In this section we introduce Littlewood identities for the ninth variation characters. See Littlewood~\cite[p. 240]{Littlewood_book},~\cite{Littlewood_article}, Koike--Terada~\cite[Lemma 1.5.1]{KoikeTerada} or Sundaram~\cite[Theorem~4.8]{Sundaram} for the classical case.
The proofs are based on Lemma~\ref{Cauchy--Binet}.

\subsection{Types C, B, D}

Hereafter we assume that $\mathcal{F}$ is an admissible polynomial sequence and $\widehat{\mathcal{F}}$ is its dual.

We start with the symplectic case.

\begin{theorem} \label{Theorem: Sp Littlewood}
    The following identity holds, where $\overline{x}_i = x_i^{-1}$,
    \begin{equation}
        \sum_{\ell(\lambda) \le n} 
        \sp_\lambda^\mathcal{F} (x_1, \dotsc, x_n) 
        \, 
        \widehat{s}_\lambda^\mathcal{F} (u_1, \dotsc, u_n) 
        = 
        \dfrac
        {
            \prod_{1 \le i < j \le n} (1 - u_i u_j)
        }
        {
            \prod_{i, j = 1}^n (1 - x_i u_j) (1 - \overline{x}_i u_j)
        }
        ,
        \label{Eq: Sp Littlewood}
    \end{equation}
    as an equality of formal power series in $u_1, \dotsc, u_n$ with coefficients in the ring of symmetric Laurent polynomials in $x_1, \dotsc, x_n$. The symmetry is with respect to the hyperoctahedral group $S_n \ltimes (\mathbb{Z}/2\mathbb{Z})^n$, where the $i$-th copy of $\mathbb{Z}/2\mathbb{Z}$ acts by $x_i \leftrightarrow \overline{x}_i$.
\end{theorem}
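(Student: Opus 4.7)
The strategy mirrors the proof of the Cauchy identity given just above. I would apply Lemma~\ref{Cauchy--Binet} with the choices $\mathbf{f}_m(x) = x f_m(x) - \overline{x} f_m(\overline{x})$ and $\mathbf{g}_m(u) = \widehat{f}_m(u)$. With these choices, $\det\left[\mathbf{f}_{\lambda_j+n-j}(x_i)\right]$ is precisely the numerator of $\sp_\lambda^\mathcal{F}(x_1,\dotsc,x_n)$ in~\eqref{Eq: sp 9th}, and $\det\left[\mathbf{g}_{\lambda_j+n-j}(u_i)\right]$ is the numerator of $\widehat{s}_\lambda^\mathcal{F}(u_1,\dotsc,u_n)$ from~\eqref{Eq: dual Schur function}. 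The infinite sums producing the kernel $\mathbf{h}(i,j)$ converge in the formal power series ring in the $u_j$'s, since $\widehat{f}_m(u_j)$ has order $m$.

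Using the defining property of the dual sequence (Lemma~\ref{Lemma: dual sequences}), the kernel evaluates to
\[
    \mathbf{h}(i,j)
    =
    \frac{x_i}{1 - x_i u_j} - \frac{\overline{x}_i}{1 - \overline{x}_i u_j}
    =
    \frac{x_i - \overline{x}_i}{(1-x_iu_j)(1-\overline{x}_iu_j)}.
\]
Substituting into Lemma~\ref{Cauchy--Binet} and factoring $\prod_{i=1}^n (x_i - \overline{x}_i)$ out of the determinant on the right gives
\[
    \sum_{\ell(\lambda) \le n} \det\left[\mathbf{f}_{\lambda_j+n-j}(x_i)\right] \det\left[\mathbf{g}_{\lambda_j+n-j}(u_i)\right]
    =
    \prod_{i=1}^n (x_i - \overline{x}_i) \cdot \det\left[\frac{1}{(1-x_iu_j)(1-\overline{x}_iu_j)}\right].
\]
Now divide by the symplectic Weyl denominator~\eqref{Eq: sp denominator} and by the Vandermonde $\V(u_1,\dotsc,u_n)$. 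The left side becomes $\sum_\lambda \sp_\lambda^\mathcal{F}(x)\,\widehat{s}_\lambda^\mathcal{F}(u)$, the factor $\prod_i (x_i - \overline{x}_i)$ cancels, and the right side reduces to $\det\left[1/((1-x_iu_j)(1-\overline{x}_iu_j))\right]$ divided by $\V(x_1+\overline{x}_1,\dotsc,x_n+\overline{x}_n)\,\V(u_1,\dotsc,u_n)$.

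The theorem therefore reduces to the ``symplectic Cauchy determinant''
\[
    \det\left[\frac{1}{(1-x_iu_j)(1-\overline{x}_iu_j)}\right]_{i,j=1}^n
    =
    \frac{\V(x_1+\overline{x}_1,\dotsc,x_n+\overline{x}_n) \, \V(u_1,\dotsc,u_n) \prod_{1\le i<j\le n}(1-u_iu_j)}{\prod_{i,j=1}^n (1-x_iu_j)(1-\overline{x}_iu_j)}.
\]
This is the main obstacle. It is a classical polynomial identity which I would prove by clearing the common denominator and comparing: both sides are then Laurent polynomials of matching degree in each variable, antisymmetric in the $u_i$'s and in the variables $y_i := x_i+\overline{x}_i$, vanishing on the loci $u_iu_j = 1$, $x_iu_j = 1$, $\overline{x}_iu_j = 1$; the proportionality constant can then be fixed by specialising, for instance, $u_n \to 0$ and inducting on $n$. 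Alternatively, setting $f_m(x) = x^m$ reduces the determinant identity to the classical type-C Littlewood identity, so that route gives the required evaluation as well. Either way, substituting the evaluation into the formula above completes the proof.
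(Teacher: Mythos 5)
Your proposal is correct and follows the paper's proof almost verbatim: the same application of Lemma~\ref{Cauchy--Binet} with $\mathbf{f}_m(x) = x f_m(x) - \overline{x} f_m(\overline{x})$, $\mathbf{g}_m(u) = \widehat{f}_m(u)$, the same kernel $\mathbf{h}(i,j) = (x_i - \overline{x}_i)/\bigl((1 - x_i u_j)(1 - \overline{x}_i u_j)\bigr)$, and the same bookkeeping with the Weyl denominator~\eqref{Eq: sp denominator}. The only point of divergence is the evaluation of the symplectic Cauchy determinant, which you flag as ``the main obstacle'' and propose to handle either by a vanishing-locus/degree argument or by appeal to the classical type-C Littlewood identity; the paper instead evaluates it directly in two lines by writing $(1 - x_i u_j)(1 - \overline{x}_i u_j) = u_j\bigl((u_j + \overline{u}_j) - (x_i + \overline{x}_i)\bigr)$, pulling $u_j^{-1}$ out of each column, and recognising an ordinary Cauchy determinant in the variables $u_j + \overline{u}_j$ and $x_i + \overline{x}_i$, followed by the elementary identity $(-1)^{\binom{n}{2}} u_1^{n-1} \dotsm u_n^{n-1}\, \V(u_1 + \overline{u}_1, \dotsc, u_n + \overline{u}_n) = \V(u_1, \dotsc, u_n) \prod_{1 \le i < j \le n}(1 - u_i u_j)$. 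Either of your routes would close the gap, but the paper's substitution is the cleaner and self-contained way to finish.
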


\begin{proof}
    Expand the left-hand side as 
    \begin{equation}
        \begin{split}
            & \sum_{\ell(\lambda) \le n} 
            \sp_\lambda^\mathcal{F} (x_1, \dotsc, x_n) 
            \, 
            \widehat{s}_\lambda^\mathcal{F} (u_1, \dotsc, u_n) 
            = \\
            & = 
            \sum_{\ell(\lambda) \le n} 
            \dfrac
            {
                1
            }
            {
                \prod_{i = 1}^n (x_i - \overline{x}_i) 
                \cdot 
                \V(x_1 + \overline{x}_1, \dotsc, x_n + \overline{x}_n)
            }
            \cdot
            \dfrac{1}{\V(u_1, \dotsc, u_n)}
            \cdot \\
            & \cdot 
            \det [ x_i f_{\lambda_j + n - j}(x_i) - \overline{x}_i f_{\lambda_j + n - j}(\overline{x}_i) ] 
            \cdot 
            \det [ \widehat{f}_{\lambda_j + n - j} (u_i) ].
        \end{split}
        \label{Eq: sum for Sp}
    \end{equation}
    Let us take $\mathbf{f}_n (x) = x f_n (x) - \overline{x} f_n (\overline{x})$ and $\mathbf{g}_n (u) = \widehat{f}_n (u)$ in Lemma~\ref{Cauchy--Binet} and evaluate the determinant $\det [ \mathbf{h}(i, j) ]$. First, 
    \begin{align*}
        & \mathbf{h}(i, j) 
        = 
        \sum_{m = 0}^\infty 
        ( x_i f_m (x_i) - \overline{x}_i f_m (\overline{x}_i) ) \widehat{f}_m (u_j) 
        = \\ 
        & = 
        x_i \cdot \dfrac{1}{1 - x_i u_j} 
        - 
        \overline{x}_i \cdot \dfrac{1}{1 - \overline{x}_i u_j} 
        = 
        \dfrac{ x_i - \overline{x}_i }{ (1 - x_i u_j) (1 - \overline{x}_i u_j) }.
    \end{align*}
    Hence our determinant is equal to 
    \[
        \det [ \mathbf{h}(i, j) ] 
        = 
        \prod_{i = 1}^n (x_i - \overline{x}_i) 
        \cdot 
        \det \left[ \dfrac{ 1 }{ (1 - x_i u_j) (1 - \overline{x}_i u_j) } \right].
    \] 
    The latter determinant can be calculated by reduction to the Cauchy determinant:
    \begin{align*}
        & \det \left[ \dfrac{ 1 }{ (1 - x_i u_j) (1 - \overline{x}_i u_j) } \right] 
        = \dfrac{1}{u_1 \dotsm u_n} \det \left[ \dfrac{ 1 }{ (u_i + \overline{u}_i) - (x_j + \overline{x}_j) } \right] 
        = \\ 
        & = 
        (-1)^{\binom{n}{2}} u_1^{n - 1} \dotsm u_n^{n - 1} 
        \V(u_1 + \overline{u}_1, \dotsc, u_n + \overline{u}_n) 
        \dfrac
        {
            \V(x_1 + \overline{x}_1, \dotsc, x_n + \overline{x}_n) 
        }
        {
            \prod_{i, j = 1}^n (1 - x_i u_j) (1 - \overline{x}_i u_j)
        } 
        = \\
        & = 
        \V(x_1 + \overline{x}_1, \dotsc, x_n + \overline{x}_n) 
        \cdot 
        \V(u_1, \dotsc, u_n) 
        \cdot 
        \dfrac
        {
            \prod_{1 \le i < j \le n} (1 - u_i u_j)
        }
        {
            \prod_{i, j = 1}^n (1 - x_i u_j) (1 - \overline{x}_i u_j)
        }
        ,
    \end{align*}
    where the last equality follows from the fact that
    \[
        (-1)^{\binom{n}{2}} u_1^{n - 1} \dotsm u_n^{n - 1} 
        \V(u_1 + \overline{u}_1, \dotsc, u_n + \overline{u}_n)
        =
        \V(u_1, \dotsc, u_n) \prod_{1 \le i < j \le n} (1 - u_i u_j).
    \]
    Substituting this into $\det [ \mathbf{h}(i, j) ]$, and $\det [ \mathbf{h}(i, j) ]$ into~\eqref{Eq: sum for Sp}, we arrive at the desired identity.
\end{proof}

The odd orthogonal case is proven in a similar way.

\begin{theorem} \label{Theorem: SO(2n + 1) Littlewood}
    The following identity holds, where $\overline{x}_i = x_i^{-1}$ :
    \begin{equation}
        \sum_{\ell(\lambda) \le n} 
        \so_\lambda^\mathcal{F} (x_1, \dotsc, x_n) 
        \, 
        \widehat{s}_\lambda^\mathcal{F} (u_1, \dotsc, u_n) 
        = 
        \dfrac{1}{\prod_{j = 1}^n (1 - u_j)}
        \cdot 
        \dfrac
        {
            \prod_{1 \le i \le j \le n} (1 - u_i u_j)
        }
        {
            \prod_{i, j = 1}^n (1 - x_i u_j) (1 - \overline{x}_i u_j)
        }
        .
        \label{Eq: SO(2n + 1) Littlewood}
    \end{equation}
\end{theorem}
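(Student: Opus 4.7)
The plan is to mimic the proof of Theorem~\ref{Theorem: Sp Littlewood} essentially verbatim, swapping the symplectic kernel $x f(x) - \overline{x} f(\overline{x})$ for the odd-orthogonal kernel $x^{1/2} f(x) - \overline{x}^{1/2} f(\overline{x})$ and keeping careful track of the extra half-power factors.

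First I would expand the left-hand side using~\eqref{Eq: so 9th}, \eqref{Eq: so denominator} and~\eqref{Eq: dual Schur function}, producing a sum over $\ell(\lambda) \le n$ of products of two determinants divided by $\prod_i (x_i^{1/2} - \overline{x}_i^{1/2}) \cdot \V(x_1 + \overline{x}_1, \dotsc, x_n + \overline{x}_n) \cdot \V(u_1, \dotsc, u_n)$. Next, I would apply Lemma~\ref{Cauchy--Binet} with
\[
    \mathbf{f}_m(x) = x^{1/2} f_m(x) - \overline{x}^{1/2} f_m(\overline{x}), \qquad \mathbf{g}_m(u) = \widehat{f}_m(u),
\]
collapsing the sum to a single determinant $\det[\mathbf{h}(i,j)]$.

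The central computation is the entry $\mathbf{h}(i,j)$. Using $\sum_m f_m(x) \widehat{f}_m(u) = (1 - xu)^{-1}$ and the identities $x_i^{1/2} \overline{x}_i = \overline{x}_i^{1/2}$ and $\overline{x}_i^{1/2} x_i = x_i^{1/2}$, the numerator of $\mathbf{h}(i,j)$ simplifies to $(1 + u_j)(x_i^{1/2} - \overline{x}_i^{1/2})$, so that
\[
    \mathbf{h}(i,j) = \dfrac{(1 + u_j)(x_i^{1/2} - \overline{x}_i^{1/2})}{(1 - x_i u_j)(1 - \overline{x}_i u_j)}.
\]
Pulling the row and column factors out of the determinant yields
\[
    \det[\mathbf{h}(i,j)] = \prod_{i=1}^n (x_i^{1/2} - \overline{x}_i^{1/2}) \cdot \prod_{j=1}^n (1 + u_j) \cdot \det\left[ \dfrac{1}{(1 - x_i u_j)(1 - \overline{x}_i u_j)} \right],
\]
and the last determinant is exactly the one evaluated in the proof of Theorem~\ref{Theorem: Sp Littlewood}.

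Finally, I would substitute this back and cancel the denominators against the Weyl denominator and Vandermonde, leaving
\[
    \prod_{j=1}^n (1 + u_j) \cdot \dfrac{\prod_{1 \le i < j \le n}(1 - u_i u_j)}{\prod_{i,j=1}^n (1 - x_i u_j)(1 - \overline{x}_i u_j)},
\]
and to recognise this as the right-hand side of~\eqref{Eq: SO(2n + 1) Littlewood}, use the factorisation $\prod_{i \le j}(1 - u_i u_j) = \prod_{i < j}(1 - u_i u_j) \cdot \prod_i (1 - u_i)(1 + u_i)$, so that $\prod_j(1+u_j) = \prod_{i \le j}(1 - u_i u_j) / \bigl(\prod_j(1-u_j) \cdot \prod_{i<j}(1-u_iu_j)\bigr)$. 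I do not anticipate any real obstacle: the only subtlety is the sign bookkeeping in deriving the factor $(1 + u_j)$, which is exactly what distinguishes the type~B case from the type~C case, where the analogous calculation produced $x_i - \overline{x}_i$ with no $u_j$-dependent factor.
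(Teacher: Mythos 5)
Your proposal is correct and follows exactly the same route as the paper's proof: the same choice of $\mathbf{f}_m$, $\mathbf{g}_m$ in Lemma~\ref{Cauchy--Binet}, the same computation yielding $\mathbf{h}(i,j) = (1+u_j)(x_i^{1/2}-\overline{x}_i^{1/2})/((1-x_iu_j)(1-\overline{x}_iu_j))$, and the same final regrouping $\prod_j(1+u_j)\prod_{i<j}(1-u_iu_j) = \prod_{i\le j}(1-u_iu_j)/\prod_j(1-u_j)$. No gaps.
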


\begin{proof}
    Take $\mathbf{f}_n (x) = x^{1/2} f_n (x) - \overline{x}^{1/2} f_n (\overline{x})$ and $\mathbf{g}_n (u) = \widehat{f}_n (u)$. Then
    \begin{align*}
        & \mathbf{h}(i, j) 
        = 
        \sum_{m = 0}^\infty 
        ( x_i^{1/2} f_m (x_i) - \overline{x}_i^{1/2} f_m (\overline{x}_i) ) \widehat{f}_m (u_j) 
        = \\ 
        & = 
        x_i^{1/2} \cdot \dfrac{1}{1 - x_i u_j} 
        - 
        \overline{x}_i^{1/2} \cdot \dfrac{1}{1 - \overline{x}_i u_j} 
        = 
        \dfrac{ (1 + u_j) (x_i^{1/2} - \overline{x}_i^{1/2}) }{ (1 - x_i u_j) (1 - \overline{x}_i u_j) }.
    \end{align*}
    Therefore, we have
    \[
        \det [ \mathbf{h}(i, j) ] 
        = 
        \prod_{j = 1}^n (1 + u_j) 
        \cdot
        \prod_{i = 1}^n (x_i^{1/2} - \overline{x}_i^{1/2}) 
        \cdot 
        \det \left[ \dfrac{ 1 }{ (1 - x_i u_j) (1 - \overline{x}_i u_j) } \right].
    \] 
    Now it suffices to notice that 
    \[
        \prod_{j = 1}^n (1 + u_j) \prod_{1 \le i < j \le n} (1 - u_i u_j) 
        = 
        \dfrac{\prod_{1 \le i \le j \le n} (1 - u_i u_j)}{\prod_{j = 1}^n (1 - u_j)},
    \]
    and the theorem follows.
\end{proof}

In the even orthogonal case we have to add the assumption that the sequence $\mathcal{F}$ is constant-term free.

\begin{theorem} \label{Theorem: SO(2n) Littlewood}
    Assume additionally that $\mathcal{F}$ is constant-term free. Then the following identity holds, where $\overline{x}_i = x_i^{-1}$:
    \begin{equation}
        \sum_{\ell(\lambda) \le n} 
        \o_\lambda^\mathcal{F} (x_1, \dotsc, x_n) 
        \, 
        \widehat{s}_\lambda^\mathcal{F} (u_1, \dotsc, u_n) 
        = 
        \dfrac
        {
            \prod_{1 \le i \le j \le n} (1 - u_i u_j)
        }
        {
            \prod_{i, j = 1}^n (1 - x_i u_j) (1 - \overline{x}_i u_j)
        }
        .
        \label{Eq: SO(2n) Littlewood}
    \end{equation}
\end{theorem}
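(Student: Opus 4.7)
The plan is to follow the template set by the proofs of Theorems~\ref{Theorem: Sp Littlewood} and~\ref{Theorem: SO(2n + 1) Littlewood}, but with an extra twist needed to handle the factor $\eta$ that distinguishes $\lambda_n = 0$ (weight $1/2$) from $\lambda_n > 0$ (weight $1$). After pulling the denominator factors $\V(x_1+\overline{x}_1,\dotsc,x_n+\overline{x}_n)$ and $\V(u_1,\dotsc,u_n)$ out of the left-hand side (as in the analogous expansion~\eqref{Eq: sum for Sp}), the problem reduces to the evaluation of a single $n \times n$ Cauchy--Binet-type determinant.

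The key trick is to apply Lemma~\ref{Cauchy--Binet} not to the sequence $\mathbf{f}_m(x) = f_m(x) + f_m(\overline{x})$ (whose $m=0$ term equals $2$), but to the sequence with that $m=0$ term \emph{halved}: take $\mathbf{f}_0(x) = 1$ and $\mathbf{f}_m(x) = f_m(x) + f_m(\overline{x})$ for $m \ge 1$, together with $\mathbf{g}_m(u) = \widehat{f}_m(u)$. Then $\det[\mathbf{f}_{\lambda_j+n-j}(x_i)]$ coincides with $\det[f_{\lambda_j+n-j}(x_i) + f_{\lambda_j+n-j}(\overline{x}_i)]$ whenever $\lambda_n > 0$ (the $m=0$ column simply does not appear), and equals exactly \emph{half} of it when $\lambda_n = 0$ (the last column becomes $(1,\dotsc,1)^{\mathrm{T}}$ rather than $(2,\dotsc,2)^{\mathrm{T}}$, so one column is scaled by $1/2$). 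This is precisely the $\eta$-weighting, so the entire $\eta$-weighted sum collapses into one application of Lemma~\ref{Cauchy--Binet}.

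It then remains to identify the kernel $\mathbf{h}(i,j)$. The constant-term free hypothesis on $\mathcal{F}$ gives, via Lemma~\ref{Lemma: dual sequences}(3), that $\widehat{f}_0(u)=1$, so
\[
    \mathbf{h}(i,j) = 1 + \sum_{m \ge 1} \bigl(f_m(x_i)+f_m(\overline{x}_i)\bigr)\widehat{f}_m(u_j) = \frac{1}{1-x_iu_j} + \frac{1}{1-\overline{x}_iu_j} - 1,
\]
and a short algebraic simplification shows that this equals $\dfrac{1-u_j^2}{(1-x_iu_j)(1-\overline{x}_iu_j)}$. Pulling the factor $1-u_j^2$ out of column $j$, the residual determinant is exactly $\det\bigl[\frac{1}{(1-x_iu_j)(1-\overline{x}_iu_j)}\bigr]$, which was already evaluated inside the proof of Theorem~\ref{Theorem: Sp Littlewood}. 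Plugging that formula in and using the identity $\prod_{j}(1-u_j^2)\cdot\prod_{i<j}(1-u_iu_j) = \prod_{i \le j}(1-u_iu_j)$ produces, after cancelling the Weyl-type factors, the right-hand side of~\eqref{Eq: SO(2n) Littlewood}.

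The main obstacle is the bookkeeping of $\eta$. Without the halving trick one would be forced to split the sum according to whether $\lambda_n$ vanishes and recombine the two pieces via the matrix determinant lemma (one can check that $\frac{1}{2}(\det M + \det(M-2J)) = \det(M-J)$ for $M = [\mathbf{h}(i,j)]$ and $J$ the all-ones matrix, which lands on the same kernel $\mathbf{h}(i,j)-1$), but the $\mathbf{f}_0 = 1$ substitution achieves this in one line. The constant-term free assumption is essential precisely because it forces $\widehat{f}_0(u)=1$, so that the halved $m=0$ column on the $\mathbf{f}$-side matches symmetrically with a column of ones on the $\mathbf{g}$-side.
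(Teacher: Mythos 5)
Your proposal is correct and follows essentially the same route as the paper: the paper's proof also sets $\mathbf{f}_0(x)=1$, $\mathbf{f}_m(x)=f_m(x)+f_m(\overline{x})$ for $m\ge 1$, uses $\widehat{f}_0=1$ from the constant-term free hypothesis to get the kernel $\frac{1-u_j^2}{(1-x_iu_j)(1-\overline{x}_iu_j)}$, and reuses the Cauchy-determinant evaluation from the symplectic case. Your explicit accounting of how the halved $m=0$ column reproduces the factor $\eta$ is exactly the point the paper leaves implicit in ``the theorem now follows immediately.''
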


\begin{proof}
    Take $\mathbf{f}_0 (x) = 1$, $\mathbf{f}_n (x) = f_n (x) + f_n (\overline{x})$ for $n > 0$, and $\mathbf{g}_n (u) = \widehat{f}_n (u)$ for all~$n$. Then 
    \begin{align*}
        & \mathbf{h}(i, j) 
        = 
        \widehat{f}_0 (u_j) 
        + 
        \sum_{m = 1}^\infty 
        ( f_m (x_i) + f_m (\overline{x}_i) ) \widehat{f}_m (u_j) 
        = \\ 
        & = 
        \dfrac{1}{1 - x_i u_j} 
        + 
        \dfrac{1}{1 - \overline{x}_i u_j} 
        - \widehat{f}_0 (u_j) 
        = 
        \dfrac{ 1 - u_j^2 }{ (1 - x_i u_j) (1 - \overline{x}_i u_j) }.
    \end{align*}
    The last equality holds under the assumption that $\mathcal{F}$ is constant-term free, since being constant-term free is equivalent to the condition $\widehat{f}_0 = 1$ by Lemma~\ref{Lemma: dual sequences}.
    The theorem now follows immediately.
\end{proof}

\begin{corollary}
    In particular, we have Littlewood identites for the factorial characters $\sp_\lambda \factorial{x}{c}{}$, $\so_\lambda \factorial{x}{c}{}$ and $\o_\lambda \factorial{x}{c}{}$ of Foley and King.
\end{corollary}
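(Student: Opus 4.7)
The plan is to specialise Theorems~\ref{Theorem: Sp Littlewood}, \ref{Theorem: SO(2n + 1) Littlewood}, and~\ref{Theorem: SO(2n) Littlewood} to the factorial sequence $f_k(x) = \factorial{x}{c}{k}$. First I would verify that this sequence is admissible in the sense of the definition: each $\factorial{x}{c}{k} = (x - c_0)(x - c_1) \dotsm (x - c_{k - 1})$ is monic of degree $k$, and $\factorial{x}{c}{0}$ is the empty product, so equals $1$. By the Example immediately following Definition~\ref{Definition: ninth CBD}, the ninth variation characters $\sp_\lambda^{\mathcal{F}}$, $\so_\lambda^{\mathcal{F}}$, $\o_\lambda^{\mathcal{F}}$ associated with this choice of $\mathcal{F}$ are precisely the Foley--King factorial characters $\sp_\lambda \factorial{x}{c}{}$, $\so_\lambda \factorial{x}{c}{}$, $\o_\lambda \factorial{x}{c}{}$. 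Similarly, the Example following Lemma~\ref{Lemma: dual sequences} identifies the dual sequence as $\widehat{f}_k(u) = u^k \big/ \prod_{l = 0}^{k} (1 - u c_l)$, and the associated dual Schur function~\eqref{Eq: dual Schur function} becomes the dual factorial Schur function~\eqref{Eq: dual factorial Schur function}, denoted $\widehat{s}_\lambda \factorial{u}{c}{}$.

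With these identifications, substituting into Theorems~\ref{Theorem: Sp Littlewood} and~\ref{Theorem: SO(2n + 1) Littlewood} immediately yields the factorial Littlewood identities for $\sp_\lambda \factorial{x}{c}{}$ and $\so_\lambda \factorial{x}{c}{}$ with no further hypothesis. For Theorem~\ref{Theorem: SO(2n) Littlewood} one must additionally verify that the factorial sequence is constant-term free, i.e., that $(0-c_0)(0-c_1)\dotsm(0-c_{k-1}) = 0$ for every $k \ge 1$. This is equivalent to the single condition $c_0 = 0$: once $c_0 = 0$, every subsequent $\factorial{x}{c}{k}$ with $k \ge 1$ contains the factor $x$. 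Under this mild normalisation (compatible with the conventions adopted by Foley--King), the even orthogonal factorial Littlewood identity follows.

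There is no real obstacle here, as the corollary is a direct specialisation. The only point worth flagging explicitly is the constant-term-free constraint $c_0 = 0$ required in type D, which should be noted as a standing hypothesis when invoking the identity for $\o_\lambda \factorial{x}{c}{}$.
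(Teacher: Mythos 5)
Your proposal is correct and is exactly the argument the paper intends: the corollary is a direct specialisation of Theorems~\ref{Theorem: Sp Littlewood}, \ref{Theorem: SO(2n + 1) Littlewood}, \ref{Theorem: SO(2n) Littlewood} to $f_k(x) = \factorial{x}{c}{k}$, using the worked examples for the characters and the dual sequence. Your observation that constant-term freeness in type D amounts to $c_0 = 0$ is also precisely what the paper states later when applying the type D Littlewood identity.
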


\subsection{Classical type A Littlewood identity}

Let $\lambda = (\lambda_1, \dotsc, \lambda_n)$ be a signature of length $\ell(\lambda) \le n$. It is possible to find a pair of partitions $\mu$ and $\nu$ of lengths $\ell(\mu) \le p$ and $\ell(\nu) \le q$, with $n = p + q$, such that 
\[
    (\lambda_1, \dotsc, \lambda_n) = (\mu_1, \dotsc, \mu_p, -\nu_q, \dotsc, -\nu_1).
\]
In such a case we write $\lambda = (\mu, \nu)$.
We want to obtain an analogue of the following formula.

\begin{proposition}[see Koike~{\cite[Lemma~2.5]{Koike}} or Olshanski~{\cite[Section~8]{Szego}}]
    Fix two nonnegative integers $p, q \ge 0$, and take $n = p + q$. 

    Let $\mu$ range over the set of all partitions of length $\le p$, and $\nu$ range over the set of all partitions of length $\le q$; write $\lambda = (\mu, \nu)$. 
    Take three sets of variables $x = (x_1, \dotsc, x_n)$, $u = (u_1, \dotsc, u_p)$, $v = (v_1, \dotsc, v_q)$. 

    Then the following identity holds:
    \begin{equation}
        \begin{gathered}
            \sum_{\lambda = (\mu, \nu)} 
            s_\lambda (x_1, \dotsc, x_n) 
            s_\mu (u_1, \dotsc, u_p) 
            s_\nu (v_1, \dotsc, v_q) 
            = \\
            =
            \dfrac
            {
                \prod_{i = 1}^{p} \prod_{j = 1}^{q} (1 - u_i v_j)
            }
            {
                \prod_{k = 1}^{n} \prod_{i = 1}^{p} (1 - x_k u_i) 
                \cdot 
                \prod_{k = 1}^{n} \prod_{j = 1}^{q} (1 - x_k^{-1} v_j)
            }
            .
        \end{gathered}
        \label{Eq: classical GL Littlewood}
    \end{equation}
\end{proposition}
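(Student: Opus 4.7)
The plan is to mimic the Cauchy--Binet argument of Lemma~\ref{Cauchy--Binet}, adapted to the block structure imposed by the decomposition $\lambda = (\mu, \nu)$. First, I would clear denominators by multiplying both sides of~\eqref{Eq: classical GL Littlewood} by the Vandermondes $\V(x_1, \dotsc, x_n)$, $\V(u_1, \dotsc, u_p)$, and $\V(v_1, \dotsc, v_q)$, so that the left-hand side becomes a sum of products of three alternants.

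For $\lambda = (\mu_1, \dotsc, \mu_p, -\nu_q, \dotsc, -\nu_1)$, set $a_j = \mu_j + p - j$ and $b_k = \nu_k + q - k$. The alternant $\det[x_i^{\lambda_j + n - j}]$ then splits into two column blocks of widths $p$ and $q$: the first $p$ columns read $x_i^{a_j + q}$, while reversing the order of the last $q$ columns (producing a sign $(-1)^{\binom{q}{2}}$) makes them read $x_i^{q - 1 - b_k}$. As $(\mu, \nu)$ varies, $(a, b)$ varies independently over pairs of strictly decreasing tuples of nonnegative integers. Introducing the semi-infinite matrices $A_{im} = x_i^{m + q}$, $B_{im} = x_i^{q - 1 - m}$, $U_{jm} = u_j^m$, $V_{km} = v_k^m$ (for $m \ge 0$), the Cauchy--Binet formula applied to the block product $[AU^t \mid BV^t]$ collapses the double sum into
\[
    (-1)^{\binom{q}{2}}
    \det
    \left[
        \dfrac{x_i^q}{1 - x_i u_j}
        \;\middle|\;
        \dfrac{x_i^q}{x_i - v_k}
    \right]_{n \times n},
\]
where the entries come from summing the geometric series $\sum_{m \ge 0} x_i^{m+q} u_j^m$ and $\sum_{m \ge 0} x_i^{q - 1 - m} v_k^m$.

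Factoring $x_i^q$ from each row and rewriting $\frac{1}{1 - x_i u_j} = \frac{-\overline{u}_j}{x_i - \overline{u}_j}$ reduces this determinant to the classical Cauchy determinant
\[
    \det \left[ \frac{1}{x_i - y_j} \right]_{i, j = 1}^n
    =
    (-1)^{\binom{n}{2}} \dfrac{\V(x) \V(y)}{\prod_{i, j}(x_i - y_j)},
    \quad
    y = (\overline{u}_1, \dotsc, \overline{u}_p, v_1, \dotsc, v_q).
\]
Expanding $\V(y) = \V(\overline{u}) \cdot \V(v) \cdot \prod_{i, k}(\overline{u}_i - v_k)$ and iteratively applying the identities $\overline{u}_i - v_k = (1 - u_i v_k)/u_i$, $x_i - \overline{u}_j = -(1 - x_i u_j)/u_j$, and $x_i - v_k = x_i(1 - \overline{x}_i v_k)$, one checks that the accumulated signs consolidate to $+1$, the powers of the $u_j$ cancel exactly, and the leftover factor $\prod_i x_i^q$ combines with $\prod_{i, k}(x_i - v_k)$ to yield $\prod_{i, k}(1 - \overline{x}_i v_k)$ in the denominator. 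After the three Vandermondes $\V(x), \V(u), \V(v)$ introduced at the outset cancel, the right-hand side of~\eqref{Eq: classical GL Littlewood} falls out. The main obstacle is purely clerical bookkeeping of signs and exponents; the conceptual content lies entirely in the Cauchy--Binet reduction.
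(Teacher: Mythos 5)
Your argument is correct and is essentially the paper's own method: the paper proves the general ninth-variation version (Theorem~\ref{Theorem: GL Littlewood}) by exactly this block Cauchy--Binet computation (an $n\times\mathbb{Z}$ matrix of powers of the $x_i$ paired against a two-block matrix in the $u$'s and $v$'s, collapsed to a Cauchy-type determinant), and recovers the classical identity~\eqref{Eq: classical GL Littlewood} by specialising $f_k(x)=x^k$. Your proof is that specialisation carried out directly, with only a cosmetic difference in normalisation (you shift the exponents by $q$ instead of passing to $s_{\lambda-(q^n)}$ and $\prod(x_k-v_j)$).
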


As we can see, there are two dual objects in this formula: $s_\mu (u_1, \dotsc, u_p)$ and $s_\nu (v_1, \dotsc, v_q)$. The first one will be replaced by the dual Schur function $\widehat{s}_\lambda^\mathcal{F} (u_1, \dotsc, u_p)$. There is a bit more work to be done to replace the second one.

\subsection{Double dual Schur functions}

Recall that we extended the polynomial sequence $\mathcal{F}$ to a doubly infinite sequence $\mathcal{F} = (f_n(x))_{n \in \mathbb{Z}}$ requiring that $f_{-n}(x)$ is a power series in $x^{-1}$ of order $n$.

Consider the sequence $\mathcal{F}_{\le 0} = \left( f_{-n}(x) \right)_{n \ge 0}$, which consists of formal power series in $x^{-1}$. By means of Lemma~\ref{Lemma: dual sequences}, construct a sequence of polynomials $\widecheck{\mathcal{F}} = \left( \widecheck{f}_n (v) \right)_{n \ge 0}$ in $v$ that is dual to the sequence $\mathcal{F}_{\le 0}$.
In other words, the sequence $\widecheck{\mathcal{F}}$ is defined by the condition that $\widecheck{f}_n (v)$ has degree $n$ and $ \sum_{n = 0}^\infty f_{-n}(x) \widecheck{f}_n (v) = \dfrac{1}{1 - x^{-1} v}.$
Such a sequence is unique but need not be monic. We call this sequence $\widecheck{\mathcal{F}}$ the \emph{double dual} of $\mathcal{F}$. Note also that since $f_0(x) = 1$, the sequence $\widecheck{\mathcal{F}}$ is constant-term free by the part (3) of Lemma~\ref{Lemma: dual sequences}.

\begin{example}\label{Example: double dual}
    Let us calculate the double dual of $\mathcal{F} = ( \factorial{x}{c}{n} )_{n \in \mathbb{Z}}$. Consider the reversed sequence $\widetilde{c}$ defined by $\widetilde{c}_n = c_{-n - 1}$ for all $n \in \mathbb{Z}$.
    We claim that $\widecheck{f}_n (v) = v \factorial{v}{\widetilde{c}}{n - 1}$ for $n > 0$, and $\widecheck{f}_0 (v) = 1$.

    \begin{proof}
    First, recall that $\factorial{x}{c}{-n} = \dfrac{ 1 }{ (x - c_{-1}) \dotsm (x - c_{-n}) }$ for $n > 0$.
    We can rewrite it as 
    \[
        \begin{split}
            \factorial{x}{c}{-n} = \dfrac{ x^{-n} }{ (1 - x^{-1} c_{-1}) \dotsm (1 - x^{-1} c_{-n}) } = \\
            = \dfrac{ x^{-n} }{ (1 - x^{-1} \cdot 0) (1 - x^{-1} c_{-1}) \dotsm (1 - x^{-1} c_{-n}) }.
        \end{split}
    \]
    Now, we know that
    \[
        \sum_{n = 0}^\infty \factorial{x}{c}{n} \dfrac{u^n}{\prod_{l = 0}^n (1 - u c_l)} 
        =
        \dfrac{1}{1 - xu}.
    \]
    From this equality we immediately deduce
    \[
        \sum_{n = 0}^\infty v \factorial{v}{\widetilde{c}}{n - 1} \dfrac{ x^{-n} }{ (1 - x^{-1} \cdot 0) (1 - x^{-1} c_{-1}) \dotsm (1 - x^{-1} c_{-n}) } 
        = 
        \dfrac{1}{1 - v x^{-1}},
    \]
    which is exactly our claim.
    \end{proof}
\end{example}

Recall that since $f_0(x) = 1$, the sequence $\widecheck{\mathcal{F}}$ is constant-term free. Thus, for any $n \ge 0$, the ratio $\dfrac{\widecheck{f}_{n + 1}(v)}{v}$ is a polynomial.

\begin{definition}
    Given a partition $\nu$ of length $\ell(\nu) \le q$, we define the \emph{double dual Schur function} by 
    \begin{equation}
        \widecheck{s}_\nu^\mathcal{F} (v_1, \dotsc, v_q)
        = 
        \dfrac
        {
            \det \left[ \dfrac{\widecheck{f}_{\nu_j + q - j + 1}(v_i) }{v_i} \right]_{i, j = 1}^q
        }
        {
            \V(v_1, \dotsc, v_q)
        }
        .
        \label{Eq: double dual Schur function}
    \end{equation}
\end{definition}

\begin{example}
    The double dual Schur functions in the factorial case are $s_\nu\factorial{v}{\widetilde{c}}{}.$
\end{example}

\subsection{Ninth variation type A Littlewood identity}

Now we are ready to state our ninth variation of the type A Littlewood identity.

\begin{theorem} \label{Theorem: GL Littlewood}
    The following identity in $K[x_1^{\pm 1}, \dotsc, x_n^{\pm 1}]^{S_n} [[u_1, \dotsc, u_p, v_1, \dotsc, v_q]]$ holds: 
    \begin{equation}
        \begin{gathered}
            \sum_{\lambda = (\mu, \nu)} 
            s_{\lambda - (q^n)}^\mathcal{F} (x_1, \dotsc, x_n) 
            \, 
            \widehat{s}_\mu^\mathcal{F} (u_1, \dotsc, u_p) 
            \, 
            \widecheck{s}_\nu^\mathcal{F} (v_1, \dotsc, v_q) 
            = \\ 
            = 
            \dfrac
            {
                \prod_{i = 1}^{p} \prod_{j = 1}^{q} (1 - u_i v_j)
            }
            {
                \prod_{k = 1}^{n} \prod_{i = 1}^{p} (1 - x_k u_i) 
                \cdot 
                \prod_{k = 1}^{n} \prod_{j = 1}^{q} (x_k - v_j)
            }
            ,
        \end{gathered}
        \label{Eq: GL Littlewood}
    \end{equation}
    where $\lambda - (q^n) = (\lambda_1 - q, \dotsc, \lambda_n - q)$.
\end{theorem}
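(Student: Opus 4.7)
The plan is to mimic the proofs of the type C, B, D Littlewood identities (Theorems~\ref{Theorem: Sp Littlewood}--\ref{Theorem: SO(2n) Littlewood}), but now handling two independent partition sums simultaneously. I would start by observing that for $\lambda = (\mu, \nu)$, the $n \times n$ determinant in the numerator of $s_{\lambda - (q^n)}^\mathcal{F}(x_1, \dotsc, x_n)$ splits naturally into two column blocks: the first $p$ columns have entries $f_{\mu_j + p - j}(x_i)$ with $j = 1, \dotsc, p$, depending only on $\mu$, while the last $q$ columns---after reversing their order at the cost of a sign $(-1)^{q(q-1)/2}$---have entries $f_{-(\nu_k + q - k + 1)}(x_i)$ with $k = 1, \dotsc, q$, depending only on $\nu$ and involving only negative indices. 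This invites a Laplace expansion along the first $p$ columns, writing the determinant as a sum over $p$-element subsets $I \subset \{1, \dotsc, n\}$ of signed products of a $p \times p$ minor (in the variables $\{x_i\}_{i \in I}$) and a $q \times q$ minor (in the variables $\{x_i\}_{i \notin I}$).

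I would next insert this expansion into the double sum, interchange the sums, and apply Lemma~\ref{Cauchy--Binet} twice, once per partition. For the $\mu$-sum, taking $\mathbf{f}_m = f_m$ and $\mathbf{g}_m = \widehat{f}_m$ gives, by the defining property of dual sequences, $\det[1/(1 - x_i u_j)]_{i \in I, j=1}^p$. For the $\nu$-sum, I would take $\mathbf{f}_m(x) = f_{-(m+1)}(x)$ and $\mathbf{g}_m(v) = \widecheck{f}_{m+1}(v)/v$; the double-dual identity $\sum_{n \ge 0} f_{-n}(x) \widecheck{f}_n(v) = 1/(1 - x^{-1}v)$ together with $\widecheck{f}_0 = 1$ yields $\mathbf{h}(i,j) = 1/(x_i - v_j)$, so the $\nu$-sum contributes $\det[1/(x_i - v_j)]_{i \notin I, j=1}^q$.

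At this point the sum over $I$ is exactly the Laplace expansion (in reverse) of the single $n \times n$ determinant $\det N$ whose first $p$ columns are $1/(1 - x_i u_j)$ and last $q$ columns are $1/(x_i - v_k)$. I would evaluate $\det N$ via the substitution $w_j = u_j^{-1}$: since $1/(1 - x_i u_j) = -w_j/(x_i - w_j)$, pulling $-w_j$ out of each of the first $p$ columns reduces $\det N$ to a classical $n \times n$ Cauchy determinant in $x_1, \dotsc, x_n$ against $w_1, \dotsc, w_p, v_1, \dotsc, v_q$. Evaluating this Cauchy determinant, re-expressing the $w$-Vandermonde and the $(w-v)$ cross-products back in terms of the $u_i$ and the factors $1 - u_i v_j$, and absorbing the resulting Vandermondes $\V(x), \V(u), \V(v)$ into the denominators of the three Schur-type functions will yield the right-hand side of~\eqref{Eq: GL Littlewood}.

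The main obstacle is sign bookkeeping: the column-reversal sign $(-1)^{q(q-1)/2}$, the Laplace signs $\sgn(I, I^c)$ (which cancel between the forward and reverse expansions), the Cauchy sign $(-1)^{\binom{n}{2}}$, and an overall sign $(-1)^{np}$ from the $-u_j^{-n}$ factors extracted when converting each $u_j$ to $w_j$. A short parity check using $\binom{p+q}{2} = \binom{p}{2} + \binom{q}{2} + pq$ should confirm that all these signs collapse to $+1$.
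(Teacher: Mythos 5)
Your proposal is correct and amounts to the same computation as the paper's proof, just organized differently: the paper forms two $n \times \mathbb{Z}$ matrices $A$ and $B$ (with $B$ block-diagonal, carrying $\widehat{f}_m(u_i)$ in the nonnegative columns and $\widecheck{f}_m(v_j)/v_j$ in the negative ones) and applies Cauchy--Binet once to $\det AB^t$, the block structure of $B$ automatically producing both your Laplace expansion and the splitting $\lambda = (\mu, \nu)$. Your explicit Laplace expansion along the first $p$ columns followed by two applications of Lemma~\ref{Cauchy--Binet} reassembles into exactly the paper's mixed Cauchy matrix with entries $1/(1 - x_k u_i)$ and $1/(x_k - v_j)$, and the sign $(-1)^{\binom{q}{2}}$ from your column reversal is the same one the paper tracks (it appears in both evaluations of the determinant and cancels).
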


Before we embark on the proof, note that if one substitutes $f_k(x) = x^k$, the identity~\eqref{Eq: GL Littlewood} transforms into the classical identity~\eqref{Eq: classical GL Littlewood} since $s_{\lambda - (q^n)}(x_1, \dotsc, x_n) = (x_1 \dotsm x_n)^{-q} s_\lambda(x_1, \dotsc, x_n)$ and $\frac{1}{\prod_{k = 1}^{n} \prod_{j = 1}^{q} (x_k - v_j)} = \frac{(x_1 \dotsm x_n)^{-q}}{\prod_{k = 1}^{n} \prod_{j = 1}^{q} (1 - x_k^{-1} v_j)}$.

\begin{proof}
    Consider the following two infinite matrices:
    \begin{align*}
        A = 
        \left[\begin{array}{ccccccc}
        \cdots & f_2(x_1) & f_1(x_1) & f_0(x_1), & f_{-1}(x_1) & f_{-2}(x_1) & \cdots \\
         & \vdots & \vdots & \vdots & \vdots & \vdots &  \\
        \cdots & f_2(x_n) & f_1(x_n) & f_0(x_n), & f_{-1}(x_n) & f_{-2}(x_n) & \cdots
        \end{array}\right], \\
        B = 
        \left[\begin{array}{cccccccc}
        \cdots & \widehat{f}_2(u_1) & \widehat{f}_1(u_1) & \widehat{f}_0(u_1), & 0 & 0 & \cdots \\
         & \vdots & \vdots & \vdots & \vdots & \vdots &  \\
        \cdots & \widehat{f}_2(u_p) & \widehat{f}_1(u_p) & \widehat{f}_0(u_p), & 0 & 0 & \cdots \\
        \cdots & 0 & 0 & 0, & \dfrac{\widecheck{f}_1(v_1)}{v_1} & \dfrac{\widecheck{f}_2(v_1)}{v_1} & \cdots \\
         & \vdots & \vdots & \vdots & \vdots & \vdots &  \\
        \cdots & 0 & 0 & 0, & \dfrac{\widecheck{f}_1(v_q)}{v_q} & \dfrac{\widecheck{f}_2(v_q)}{v_q} & \cdots
        \end{array}\right].
    \end{align*}
    We number their columns by all the integers in reverse order, $\dotsc, 2, 1, 0, -1, -2, \dotsc$, and the commas in the matrices are placed between the $0$-th and $(-1)$-st columns.

    Let us evaluate the determinant $\det A B^t$ in two ways.
    First, one can calculate the entries of the matrix $A B^t$ explicitly. Let $1 \le k \le n$, $1 \le i \le p$, $1 \le j \le q$. We have 
    \begin{gather*}
        (A B^t)_{k, i} 
        = 
        \sum_{m = 0}^\infty 
        f_m(x_k) \widehat{f}_m(u_i)
        = 
        \dfrac{ 1 }{ 1 - x_k u_i }
        , 
        \\ 
        (A B^t)_{k, p + j} 
        = 
        \sum_{m = 1}^\infty 
        f_{-m}(x_k) \dfrac{\widecheck{f}_m(v_j)}{v_j}
        = 
        \dfrac{1}{v_j} \sum_{m = 0}^\infty f_{-m}(x_k) \widecheck{f}_m(v_j) - \dfrac{1}{v_j} f_0(x_k) \widecheck{f}_0(v_j)
        .
    \end{gather*}
    Since $\mathcal{F}$ is admissible, $f_0(x_k) = 1$. It follows that $\widecheck{f}_0(v_j) = 1$. Hence the latter expression becomes
    \[
        \dfrac{1}{v_j} \sum_{m = 0}^\infty 
        f_{-m}(x_k) \widecheck{f}_m(v_j) - \dfrac{1}{v_j} f_0(x_k) \widecheck{f}_0(v_j) 
        = 
        \dfrac{1}{v_j} \left( \dfrac{ 1 }{ 1 - x_k^{-1} v_j } - 1 \right) 
        = 
        \dfrac{1}{x_k - v_j}.
    \]
    Now we have
    \begin{align*}
        A B^t 
        = 
        \left[
        \begin{array}{cccccc}
        \dfrac{ 1 }{ 1 - x_1 u_1 } & \cdots & \dfrac{ 1 }{ 1 - x_1 u_p } & \dfrac{1}{x_1 - v_1} & \cdots & \dfrac{1}{x_1 - v_q} \\
        \vdots &  & \vdots & \vdots &  & \vdots \\
        \vdots &  & \vdots & \vdots &  & \vdots \\
        \dfrac{ 1 }{ 1 - x_n u_1 } & \cdots & \dfrac{ 1 }{ 1 - x_n u_p } & \dfrac{1}{x_n - v_1} & \cdots & \dfrac{1}{x_n - v_q}
        \end{array}
        \right]
        .
    \end{align*}
    The determinant of this matrix can be easily reduced to the Cauchy determinant. In this way we obtain
    \[
        \begin{split}
            \det A B^t 
            = 
            (-1)^{\binom{q}{2}}
            \cdot 
            \dfrac
            {
                \prod_{i = 1}^{p} \prod_{j = 1}^{q} (1 - u_i v_j)
            }
            {
                \prod_{k = 1}^{n} \prod_{i = 1}^{p} (1 - x_k u_i) 
                \cdot 
                \prod_{k = 1}^{n} \prod_{j = 1}^{q} (x_k - v_j)
            } \cdot
            \\
            \cdot \V(x_1, \dotsc, x_n) \V(u_1, \dotsc, u_p) \V(v_1, \dotsc, v_q).
        \end{split}
    \]

    \medskip
    Now we turn to the second way. 
    From the Cauchy--Binet identity it follows that 
    \[
        \det A B^t = \sum_{+ \infty > l_1 > \dotsb > l_n > -\infty} \det A_{l_1, \dotsc, l_n} \det B_{l_1, \dotsc, l_n},
    \]
    where $A_{l_1, \dotsc, l_n}$ and $B_{l_1, \dotsc, l_n}$ are the submatrices of $A$ and $B$ formed by the columns with numbers $l_1, \dotsc, l_n$.
    The determinant $\det B_{l_1, \dotsc, l_n}$ vanishes unless $l_p \ge 0$ and $l_{p + 1} \le -1$, so we can write the sequence $(l_1, \dotsc, l_n)$ as 
    \[
        (l_1, \dotsc, l_n) 
        = 
        (\mu_1, \dotsc, \mu_p, -\nu_q, \dotsc, -\nu_1) + (p - 1, \dotsc, 0, -1, \dotsc, -q),
    \]
    where $\mu$ and $\nu$ are partitions. The determinant is then equal to 
    \[
        \det B_{l_1, \dotsc, l_n} 
        = 
        \widehat{s}_\mu^\mathcal{F} (u_1, \dotsc, u_p) \V(u_1, \dotsc, u_p)
        \cdot
        (-1)^{\binom{q}{2}} 
        \cdot
        \widecheck{s}_{\nu}^\mathcal{F} (v_1, \dotsc, v_q)  \V(v_1, \dotsc, v_q).
    \]
    To handle the second determinant, $\det A_{l_1, \dotsc, l_n}$, denote as before 
    \[
        \lambda = (\lambda_1, \dotsc, \lambda_n) = (\mu_1, \dotsc, \mu_p, -\nu_q, \dotsc, -\nu_1) = (\mu, \nu),
    \]
    and rewrite the sequence $(l_1, \dotsc, l_n)$ in another way:
    \[
        (l_1, \dotsc, l_n) 
        = 
        (\lambda_1 - q, \dotsc, \lambda_n - q) + (n - 1, \dotsc, 0).
    \]
    Now it remains to notice that 
    \[
        \det A_{l_1, \dotsc, l_n} 
        = 
        s_{\lambda - (q^n)}^\mathcal{F} (x_1, \dotsc, x_n) \V(x_1, \dotsc, x_n)
    \]
    This concludes the proof of the theorem.
\end{proof}

%%%%%%%%%%%%%%%%%% Section

\section{Jacobi--Trudi identities}

Recall that for the factorial Schur functions there is a Jacobi--Trudi identity of the following form (see~\cite{Macdonald}):
\[
    s_\lambda \factorial{x}{c}{} = \det \left[ h_{\lambda_i - i + j} \factorial{x}{\tau^{1 - j} c}{} \right],
\]
where $h_k \factorial{x}{\cdot}{} = s_{(k)} \factorial{x}{\cdot}{}$ are the complete factorial symmetric polynomials, and $h_k \factorial{x}{\cdot}{} = 0$ for $k < 0$.

In this section we prove similar Jacobi--Trudi identities for the factorial characters of Foley and King. In the nonfactorial case these identites are due to H.~Weyl~\cite{Weyl}, see also Koike--Terada~\cite{KoikeTerada}. They are also sometimes called \emph{second Weyl identities}, first Weyl identities being the Weyl character formula.

We use the principle that given a Cauchy-like identity, one can expect a Jacobi-Trudi identity to hold, and vice versa. This was exploited by Molev~\cite{Molev} to produce a Cauchy identity for the double Schur functions. Our proof follows along Molev's lines.

\subsection{Generating functions}

It is immediate to deduce generating functions from the Littlewood identities. In what follows we will only need the generating function of $h_k \factorial{x}{c}{}$'s, but we give all four of them for the sake of completeness. 

These identities are taken as definitions of $h_k^{gl} \factorial{x}{c}{} = h_k \factorial{x}{c}{}$, $h_k^{sp} \factorial{x}{c}{} = \sp_{(k)} \factorial{x}{c}{}$, $h_k^{oo} \factorial{x}{c}{} = \so_{(k)} \factorial{x}{c}{}$ and $h_k^{eo} \factorial{x}{c}{} = \o_{(k)} \factorial{x}{c}{}$ in Foley--King~\cite{2018a}.

\begin{proposition}
    We have
    \begin{align}
        & \sum_{k = 0}^\infty h_k \factorial{x}{c}{} \dfrac{t^k}{\prod_{l = 0}^{k + n - 1} (1 - t c_l)} 
        = 
        \dfrac{1}{\prod_{i = 1}^n (1 - x_i t)};
        \label{eq: h_k generating function}
        \\
        & \sum_{k = 0}^\infty \sp_{(k)} \factorial{x}{c}{} \dfrac{t^k}{\prod_{l = 0}^{k + n - 1} (1 - t c_l)} 
        = 
        \dfrac{1}{\prod_{i = 1}^n (1 - x_i t) (1 - \overline{x}_i t)};
        \label{eq: type C generating function}
        \\
        & \sum_{k = 0}^\infty \so_{(k)} \factorial{x}{c}{} \dfrac{t^k}{\prod_{l = 0}^{k + n - 1} (1 - t c_l)} 
        = 
        \dfrac{1 + t}{\prod_{i = 1}^n (1 - x_i t) (1 - \overline{x}_i t)};
        \label{eq: type B generating function}
        \\
        & \sum_{k = 0}^\infty \o_{(k)} \factorial{x}{c}{} \dfrac{t^k}{\prod_{l = 0}^{k + n - 1} (1 - t c_l)} 
        = 
        \dfrac{1 - t^2}{\prod_{i = 1}^n (1 - x_i t) (1 - \overline{x}_i t)}.
        \label{eq: type D generating function}
    \end{align}
\end{proposition}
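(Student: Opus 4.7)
My plan is to derive the four generating functions uniformly from the Cauchy--Binet lemma (Lemma~\ref{Cauchy--Binet}), with an auxiliary matrix chosen to isolate single-row partitions.

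For each character type I take the $n\times\infty$ matrix $A$ whose $(i,m)$-entry is the ``column building block'' of the corresponding bialternant: $f_m(x_i)$ for type A, $x_if_m(x_i)-\overline{x}_if_m(\overline{x}_i)$ for type C, $x_i^{1/2}f_m(x_i)-\overline{x}_i^{1/2}f_m(\overline{x}_i)$ for type B, and $f_m(x_i)+f_m(\overline{x}_i)$ for type D. Pair it with the $n\times\infty$ matrix $B$ whose first row is $B_{1m}=\widehat{f}_m(t)$ and whose remaining rows are the standard basis vectors, $B_{jm}=\delta_{m,n-j}$ for $j\ge 2$. Then $\det B_{l_1,\ldots,l_n}$ vanishes unless $(l_2,\ldots,l_n)=(n-2,\ldots,0)$, in which case it equals $\widehat{f}_{l_1}(t)$. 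Writing $l_1=k+n-1$, the Cauchy--Binet expansion telescopes to
\[
    \det AB^{\top} \;=\; \sum_{k\ge 0}\det A_{k+n-1,\,n-2,\ldots,0}\cdot\widehat{f}_{k+n-1}(t) \;=\; (\text{Weyl denominator})\cdot t^{n-1}\sum_{k\ge 0}\frac{t^k\,\cdot\,(\text{character of }(k))}{\prod_{l=0}^{k+n-1}(1-tc_l)},
\]
using the bialternant formulas for the characters and $\widehat{f}_{k+n-1}(t)=t^{k+n-1}/\prod_{l=0}^{k+n-1}(1-tc_l)$.

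Direct computation of $\det AB^{\top}$ then gives the right-hand side. The duality $\sum_m f_m(x)\widehat{f}_m(t)=1/(1-xt)$ reduces the first column of $AB^{\top}$ to an explicit rational function, while the remaining columns are the usual Weyl denominator columns. After factoring the antiinvariant $x_i-\overline{x}_i$ (type C) or $x_i^{1/2}-\overline{x}_i^{1/2}$ (type B) out of each row, or, in type D, performing the decomposition $(2-y_it)/(1-y_it+t^2)=1+(1-t^2)/(1-y_it+t^2)$ and killing the constant piece by a column operation, the problem reduces to one of the two Cauchy-type evaluations
\[
    \det\!\left[\tfrac{1}{1-x_it},\,x_i^{n-2},\ldots,1\right]=\tfrac{t^{n-1}V(x)}{\prod_i(1-x_it)}\qquad\text{or}\qquad\det\!\left[\tfrac{1}{1-y_it+t^2},\,y_i^{n-2},\ldots,1\right]=\tfrac{t^{n-1}V(y)}{\prod_i(1-y_it+t^2)},
\]
where $y_i=x_i+\overline{x}_i$. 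Both follow from expanding the first column as a power series in $t$ and observing that only terms of $x_i$- (resp.\ $y_i$-) degree at least $n-1$ escape column-span reduction against the monomial columns.

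Equating the two expressions for $\det AB^{\top}$ and cancelling $(\text{Weyl denominator})\cdot t^{n-1}$ produces the four identities. The real content of the proof is the Cauchy-type determinant evaluations above; the only case-dependent feature is the multiplicative constant $1$, $1$, $1+t$, or $1-t^2$ extracted from the first column, which is precisely what distinguishes the numerators in~\eqref{eq: h_k generating function}--\eqref{eq: type D generating function}. The one remaining bookkeeping point --- matching the prefactor $\eta=\tfrac{1}{2}$ in~\eqref{Eq: o} against the $\tfrac{1}{2}$ in~\eqref{Eq: o denominator} for type D --- is automatic since $\lambda=(k,0,\ldots,0)$ satisfies $\lambda_n=0$ whenever $n\ge 2$.
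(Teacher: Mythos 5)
Your proof is correct, but it takes a genuinely different route from the paper's. The paper obtains all four identities as corollaries of the already-established Littlewood identities (Theorems~\ref{Theorem: Sp Littlewood}--\ref{Theorem: SO(2n) Littlewood}) and the Cauchy identity, by the successive specialisation $u_n = 0, u_{n-1} = 0, \dotsc, u_2 = 0$ of $\widehat{s}_\lambda \factorial{u}{c}{}$ --- a slightly delicate $0/0$ limit, since the numerator and the Vandermonde vanish together --- which kills all terms with $\lambda_2 > 0$. You instead apply Lemma~\ref{Cauchy--Binet} directly with a matrix $B$ having one $\widehat{f}$-row and $n-1$ coordinate rows, so that the Cauchy--Binet sum is supported on one-row partitions from the outset, and then evaluate $\det AB^{t}$ by the same kernel computations $\sum_m \mathbf{f}_m(x)\widehat{f}_m(t)$ that appear in the proofs of the Littlewood identities; in effect you inline the one-row case of those proofs. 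Your two Cauchy-type determinant evaluations check out (the second follows, e.g., by multiplying row $i$ by $1 - y_i t + t^2$ and column-reducing), and your $\eta = \tfrac12$ bookkeeping in type D is right: both sides of your identity for $\det AB^{t}$ carry the factor $2$ coming from $\det[\mathbf{f}_{n-j}(x_i)] = 2\V(y)$. One dividend of your approach is that the type D identity~\eqref{eq: type D generating function} is obtained without the constant-term-free hypothesis (for $n \ge 2$), whereas the paper's route inherits that hypothesis from Theorem~\ref{Theorem: SO(2n) Littlewood}; indeed for $n = 1$ the identity genuinely fails unless $\widehat{f}_0 = 1$, so your restriction to $n \ge 2$ there is not cosmetic. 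The trade-off is that the paper's argument is shorter once the Littlewood identities are in hand and records the specialisation of $\widehat{s}_\lambda$ explicitly, which is reused nowhere else but makes the logical dependence transparent.
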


\begin{proof}
    Set $u_n = 0$ in $\widehat{s}_\lambda \factorial{u_1, \dotsc, u_n}{c}{}$. The denominator becomes 
    \[
        \V (u_1, \dotsc, u_{n - 1}, 0) 
        = 
        u_1 \dotsm u_{n - 1} \V (u_1, \dotsc, u_{n - 1}).
    \]
    If $\lambda_n \ne 0$, the numerator becomes $0$, since the last row of the determinant in the numerator consists entirely of zeros. Otherwise, it turns into
    \[
        \dfrac{u_1 \dotsm u_{n - 1}}{(1 - u_1 c_0) \dotsm (1 - u_{n - 1} c_0)}
        \det 
        \left[ 
            \dfrac{u_i^{\lambda_j + (n - 1) - j}}{ \prod_{l = 0}^{\lambda_j + (n - 1) - j} (1 - u_i c_{l + 1}) }
        \right]_{i, j = 1}^{n - 1}.
    \]
    Hence 
    \[
        \widehat{s}_\lambda \factorial{u_1, \dotsc, u_n}{c}{} \, |_{u_n = 0}
        = 
        \delta_{\lambda_n, 0}
        \dfrac{1}{(1 - u_1 c_0) \dotsm (1 - u_{n - 1} c_0)}
        \widehat{s}_\lambda \factorial{u_1, \dotsc, u_{n - 1}}{\tau c}{}.
    \]
    
    Now let us successively substitute $u_{n - 1} = 0$, $u_{n - 2} = 0$, \ldots, $u_{2} = 0$. Eventually, we are left with 
    \[
        \delta_{\lambda_n, 0} \dotsm \delta_{\lambda_2, 0} \,
        \dfrac{1}{\prod_{l = 0}^{n - 2} (1 - u_1 c_l)}
        \widehat{s}_\lambda \factorial{u_1}{\tau^{n - 1} c}{}.
    \]
    Since $\widehat{s}_\lambda \factorial{u_1}{\tau^{n - 1} c}{} = \dfrac{u_1^{\lambda_1}}{\prod_{l = 0}^{\lambda_1} (1 - u_1 c_{n - 1 + l})}$, this is equal to 
    \[
        \delta_{\lambda_n, 0} \dotsm \delta_{\lambda_2, 0} \,
        \dfrac{u_1^{\lambda_1}}{\prod_{l = 0}^{ \lambda_1 + n - 1 } (1 - u_1 c_l)}.
    \]

    In order to obtain the generating functions~\eqref{eq: type C generating function},~\eqref{eq: type B generating function},~\eqref{eq: type D generating function} in the types CBD, take any of the Littlewood identites~\eqref{Eq: Sp Littlewood},~\eqref{Eq: SO(2n + 1) Littlewood},~\eqref{Eq: SO(2n) Littlewood} and substitute $u_n = u_{n - 1} = \dotsb = u_2 = 0$. Only the terms with $\lambda_n = \dotsb = \lambda_2 = 0$ survive, and we arrive at the generating function identities stated in the theorem.

    To produce formula~\eqref{eq: h_k generating function}, take the type A Littlewood identity~\eqref{Eq: GL Littlewood} with $q = 0$ (i.e.~the Cauchy identity) and apply the same procedure.
\end{proof}

    We will only need the type A generating function~\eqref{eq: h_k generating function}, and it will be more convinient to have it in the following form:

\begin{corollary}
    We have 
    \begin{equation}
        \sum_{k = 0}^\infty h_k \factorial{x}{c}{} \dfrac{t^k}{\prod_{l = 1}^{k} (1 - t c_{n - 1 + l})} 
        = 
        \dfrac{ \prod_{l = 0}^{n - 1} (1 - t c_l) }{\prod_{i = 1}^n (1 - x_i t)}.
        \label{eq: h_k convenient generating function}
    \end{equation}
\end{corollary}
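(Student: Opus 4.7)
The plan is to deduce this corollary directly from the generating function identity~\eqref{eq: h_k generating function}, which has already been established in the preceding proposition. The only thing to do is to separate out, from the product $\prod_{l = 0}^{k + n - 1} (1 - t c_l)$ appearing in the denominator of~\eqref{eq: h_k generating function}, the factors that do not depend on the summation index $k$.

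Concretely, I would split the product as
\[
    \prod_{l = 0}^{k + n - 1} (1 - t c_l)
    =
    \prod_{l = 0}^{n - 1} (1 - t c_l)
    \cdot
    \prod_{l = n}^{k + n - 1} (1 - t c_l),
\]
and then reindex the second factor by $l \mapsto l - (n-1)$ to rewrite it as $\prod_{l = 1}^{k} (1 - t c_{n - 1 + l})$. This yields
\[
    \dfrac{t^k}{\prod_{l = 0}^{k + n - 1} (1 - t c_l)}
    =
    \dfrac{1}{\prod_{l = 0}^{n - 1} (1 - t c_l)}
    \cdot
    \dfrac{t^k}{\prod_{l = 1}^{k} (1 - t c_{n - 1 + l})}.
\]
Substituting this decomposition into~\eqref{eq: h_k generating function} and multiplying both sides by the $k$-independent factor $\prod_{l = 0}^{n - 1} (1 - t c_l)$ produces exactly~\eqref{eq: h_k convenient generating function}.

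There is no real obstacle here: once~\eqref{eq: h_k generating function} is in hand, the corollary is a one-line rewriting of the denominator, and the equality is understood as an identity in the ring of formal power series in $t$ over the appropriate ground ring (so that the constant term $\prod_{l = 0}^{n - 1}(1 - t c_l)$ passes freely through the infinite sum). The only bookkeeping point worth double-checking is the index range: the factor $\prod_{l=1}^{k}(1 - tc_{n-1+l})$ is an empty product (equal to $1$) when $k = 0$, matching the convention implicit in~\eqref{eq: h_k generating function}, and the two sides agree.
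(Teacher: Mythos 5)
Your proof is correct and matches the paper's (implicit) reasoning: the paper states this as an immediate corollary of~\eqref{eq: h_k generating function} with no separate proof, precisely because it is the one-line reindexing $\prod_{l=0}^{k+n-1}(1-tc_l) = \prod_{l=0}^{n-1}(1-tc_l)\cdot\prod_{l=1}^{k}(1-tc_{n-1+l})$ that you carry out. Your index bookkeeping and the $k=0$ empty-product check are both right.
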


\begin{remark}
    Recall that $S^k(\mathbb{C}^{2n})$ is an irreducible module over $\Sp(2n, \mathbb{C})$. That is, 
    \[
        \sp_{(k)} (x) = h_k (x, \overline{x}).
    \]
    Thus, we would expect that there is a similar formula for the factorial characters $\sp_{(k)} \factorial{x}{c}{}$. In other words, we are looking for such a sequence $a$ that 
    \[
        \sp_{(k)} \factorial{x}{c}{} = h_k \factorial{x, \overline{x}}{a}{}.
    \]

    It follows from~\eqref{eq: h_k generating function} and~\eqref{eq: type C generating function} that the only natural way to choose such a sequence is 
    \[
        a = \tau^{-n} ( c^{\text{cut}} ),
    \]
    where $(c^{\text{cut}})_m = 0$ for $m < 0$ and $(c^{\text{cut}})_m = c_m$ for $m \ge 0$. This justifies the appearance of negative shifts in the Jacobi--Trudi identities below.
\end{remark}

\subsection{Types C, B, D}

We begin with the type C.

\begin{theorem}
    Assume that $c_n = 0$ for all $n < 0$. Then we have the following identity:
    \begin{equation}
        \sp_\lambda \factorial{x}{c}{} = 
        \dfrac{1}{2} 
        \det \left[ h_{\lambda_i - i + j} \factorial{x, \overline{x}}{\tau^{1 - n - j} c}{} + h_{\lambda_i - i - j + 2} \factorial{x, \overline{x}}{\tau^{-1 - n + j} c}{} \right]_{i, j = 1}^n,
        \label{eq: factorial JT C}
    \end{equation}
    where $h_k \factorial{x, \overline{x}}{\cdot}{}$ are the complete factorial symmetric polynomials in the $2n$ variables $x_1, \dotsc, x_n, \overline{x}_1, \dotsc, \overline{x}_n$, and $h_k \factorial{x, \overline{x}}{\cdot}{} = 0$ for $k < 0$.
    \label{Theorem: Jacobi-Trudi in type C}
\end{theorem}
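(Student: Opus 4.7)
The approach is to derive the identity from the symplectic Littlewood identity (Theorem~\ref{Theorem: Sp Littlewood}), specialised to the factorial sequence $\mathcal{F} = (\factorial{x}{c}{n})_{n \ge 0}$, following the principle (employed by Molev~\cite{Molev}) that a Cauchy-type identity produces a Jacobi--Trudi formula by comparing coefficients against a dual basis.

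Write $y = (x_1, \dotsc, x_n, \overline{x}_1, \dotsc, \overline{x}_n)$ and $\widehat{f}_r(u) = u^r / \prod_{l = 0}^r (1 - u c_l)$. First I would multiply the Sp Littlewood identity by the Vandermonde $\V(u_1, \dotsc, u_n)$, so that the left-hand side becomes $\sum_\lambda \sp_\lambda \factorial{x}{c}{} \det[\widehat{f}_{\lambda_j + n - j}(u_i)]_{i, j}$. A direct manipulation of~\eqref{Eq: o denominator} (pulling out $\prod_i u_i^{n - 1}$ row-wise) gives
\[
    \V(u) \prod_{1 \le i < j \le n}(1 - u_i u_j) = \dfrac{(-1)^{\binom{n}{2}}}{2} \det\bigl[ u_i^{2n - j - 1} + u_i^{j - 1} \bigr]_{i, j = 1}^n,
\]
and distributing the Sp Littlewood denominator row-wise into this determinant expresses the right-hand side as
\[
    \V(u) \cdot (\text{RHS of~\eqref{Eq: Sp Littlewood}}) = \dfrac{(-1)^{\binom{n}{2}}}{2} \det\left[ \dfrac{u_i^{2n - j - 1} + u_i^{j - 1}}{\prod_{k = 1}^n (1 - x_k u_i)(1 - \overline{x}_k u_i)} \right]_{i, j = 1}^n.
\]

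Next I would expand each entry via the generating-function identity
\[
    \dfrac{u^m}{\prod_{i = 1}^{2n}(1 - y_i u)} = \sum_{k \ge 0} h_k \factorial{y}{\tau^{m + 1 - 2n} c}{} \, \widehat{f}_{m + k}(u),
\]
valid for $m \in \{0, 1, \dotsc, 2n - 1\}$ under the assumption $c_n = 0$ for $n < 0$; this follows from~\eqref{eq: h_k convenient generating function} (with $n$ replaced by $2n$) after shifting $c$ and noting that the factors $(1 - u c_l)$ with $l < 0$ equal $1$. I would apply it with $m = 2n - j - 1$ (yielding shift $\tau^{-j} c$) and $m = j - 1$ (yielding shift $\tau^{j - 2n} c$). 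The Cauchy--Binet identity then rewrites the determinant above as
\[
    \sum_\lambda \det[\widehat{f}_{\lambda_j + n - j}(u_i)]_{i, j} \cdot \det[C_{j, \, \lambda_l + n - l}]_{l, j = 1}^n,
\]
where
\[
    C_{j, m} = h_{m - 2n + j + 1} \factorial{y}{\tau^{-j} c}{} + h_{m - j + 1} \factorial{y}{\tau^{j - 2n} c}{},
\]
with the convention $h_k = 0$ for $k < 0$. Equating coefficients of the linearly independent determinants $\det[\widehat{f}_{\lambda_j + n - j}(u_i)]_{i,j}$ on both sides yields
\[
    \sp_\lambda \factorial{x}{c}{} = \dfrac{(-1)^{\binom{n}{2}}}{2} \det[C_{j, \, \lambda_l + n - l}]_{l, j = 1}^n.
\]
Reversing the columns (substituting $j \mapsto n + 1 - j$, which contributes a sign $(-1)^{\binom{n}{2}}$) absorbs this sign and transforms the shifts as $\tau^{-j} c \mapsto \tau^{-1 - n + j} c$, $\tau^{j - 2n} c \mapsto \tau^{1 - n - j} c$, and the $h$-indices into $\lambda_l - l - j + 2$ and $\lambda_l - l + j$, reproducing the statement of the theorem.

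The main challenge is the careful bookkeeping of signs and index shifts: verifying the generating-function expansion with the stated shifts (the single place where the assumption $c_n = 0$ for $n < 0$ is used), tracking the two $(-1)^{\binom{n}{2}}$ signs from Cauchy--Binet when the strictly decreasing $n$-tuple of column indices is parameterised by partitions, and confirming that the final column reversal aligns the two $h$-terms and their shifts exactly as stated.
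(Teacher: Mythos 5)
Your proposal is correct and is essentially the paper's own argument: both derive the identity from the symplectic Littlewood identity by matching coefficients of the linearly independent determinants $\det[\widehat{f}_{\lambda_j + n - j}(u_i)]$, using the generating function of the $h_k\factorial{x,\overline{x}}{\cdot}{}$'s in $2n$ variables (with the hypothesis $c_n = 0$ for $n < 0$ used in exactly the same place to kill the extra factors $(1 - u c_l)$ with $l < 0$) and the evaluation of $\det\left[u_i^{n-j} + u_i^{n+j-2}\right]$ as $2\V(u)\prod_{i<j}(1 - u_i u_j)$. The only difference is one of direction — the paper expands the candidate Jacobi--Trudi determinant as a sum over $S_n$ and verifies it reproduces the Littlewood right-hand side, whereas you expand that right-hand side via Cauchy--Binet and read off the coefficient — which is the same computation run in reverse; your index and sign bookkeeping (the two $(-1)^{\binom{n}{2}}$'s cancelling under column reversal, and the shifts $\tau^{-j}c \mapsto \tau^{-1-n+j}c$, $\tau^{j-2n}c \mapsto \tau^{1-n-j}c$) checks out.
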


\begin{remark}
    One could also rewrite the identity~\eqref{eq: factorial JT C} more succinctly using the sequence $a = \tau^{-n} ( c^{\text{cut}} )$ from above.
\end{remark}

\begin{proof}
    For each $\alpha = (\alpha_1, \dotsc, \alpha_n) \in \mathbb{N}^n$ define 
    \begin{equation*} % \label{eq: def of jtc}
        \jt_\alpha^C \factorial{x}{c}{} 
        = 
        \dfrac{1}{2} 
        \det \left[ h_{\alpha_i - n + j} \factorial{x, \overline{x}}{\tau^{1 - n - j} c}{} + h_{\alpha_i - n - j + 2} \factorial{x, \overline{x}}{\tau^{-1 - n + j} c}{} \right]_{i, j = 1}^n.
    \end{equation*}
    Then on the right-hand side of~\eqref{eq: factorial JT C} we have $\jt_{\lambda + \rho}^C \factorial{x}{c}{}$, where $\rho = (n - 1, \dotsc, 0)$. 
    
    Consider the sum
    \begin{equation} \label{eq: jt c}
        \sum_{\ell(\lambda) \le n} 
        \jt_{\lambda + \rho}^C \factorial{x}{c}{}
        \, 
        \widehat{s}_\lambda \factorial{u}{c}{}.
    \end{equation}
    As follows from the Littlewood identity, the theorem is proven if we show that this sum is equal to
    \[
        \dfrac
        {
            \prod_{1 \le i < j \le n} (1 - u_i u_j)
        }
        {
            \prod_{i, j = 1}^n (1 - x_i u_j) (1 - \overline{x}_i u_j)
        }.
    \]
    Multiply the sum~\eqref{eq: jt c} by the Vandermonde $\V(u_1, \dotsc, u_n)$. Then it turns into
    \begin{equation} \label{eq: jtc and det gamma}
        \sum_{\gamma} 
        \jt_{\gamma}^C \factorial{x}{c}{}
        \, 
        \det \left[ \dfrac{u_i^{\gamma_j}}{ \prod_{l = 0}^{\gamma_j} (1 - u_i c_l) } \right],
    \end{equation}
    where the sum is taken over $n$-tuples $\gamma = (\gamma_1, \dotsc, \gamma_n) \in \mathbb{N}^n$ with $\gamma_1 > \dotsb > \gamma_n \ge 0$.

    Since 
    \[
        \det \left[ \dfrac{u_i^{\gamma_j}}{ \prod_{l = 0}^{\gamma_j} (1 - u_i c_l) } \right] 
        = 
        \sum_{\sigma \in S_n} \sgn(\sigma) 
        \prod_{i = 1}^n 
        \dfrac{u_i^{\gamma_{\sigma(i)}}}{ \prod_{l = 0}^{\gamma_{\sigma(i)}} (1 - u_i c_l) }
    \]
    and $\jt_{\gamma}^C \factorial{x}{c}{} $ is skew-symmetric under permutations of the components of $\gamma$, we can rewrite~\eqref{eq: jtc and det gamma} in the form
    \[
        \sum_{\alpha} 
        \jt_{\alpha}^C \factorial{x}{c}{}
        \, 
        \prod_{i = 1}^n \dfrac{u_i^{\alpha_i}}{ \prod_{l = 0}^{\alpha_i} (1 - u_i c_l) },
    \]
    where the sum is taken over all $n$-tuples $\alpha = (\alpha_1, \dotsc, \alpha_n) \in \mathbb{N}^n$ of nonnegative integers.
    Using the definition of $\jt_{\alpha}^C \factorial{x}{c}{}$ we can rewrite the sum once again:
    \[
        \dfrac{1}{2} 
        \sum_{\alpha} 
        \det \left[ \left( h_{\alpha_i - n + j} \factorial{x, \overline{x}}{\tau^{1 - n - j} c}{} + h_{\alpha_i - n - j + 2} \factorial{x, \overline{x}}{\tau^{-1 - n + j} c}{} \right) \dfrac{u_i^{\alpha_i}}{ \prod_{l = 0}^{\alpha_i} (1 - u_i c_l) } \right].
    \]
    Writing the determinant as a sum over the symmetric group we get 
    \[
        \begin{split}
            \dfrac{1}{2} 
            \sum_{\alpha} 
            \sum_{\sigma \in S_n} \sgn(\sigma)
            \prod_{j = 1}^n & 
            \left( h_{\alpha_{\sigma(j)} - n + j} \factorial{x, \overline{x}}{\tau^{1 - n - j} c}{} + h_{\alpha_{\sigma(j)} - n - j + 2} \factorial{x, \overline{x}}{\tau^{-1 - n + j} c}{} \right) 
            \cdot
            \\ &
            \cdot
            \dfrac{u_{\sigma(j)}^{\alpha_{\sigma(j)}}}{ \prod_{l = 0}^{\alpha_{\sigma(j)}} (1 - u_{\sigma(j)} c_l) }.
        \end{split}
    \]
    Interchanging the order of the sums we obtain 
    \begin{equation} \label{eq: jtc long sum}
        \begin{split}
           \dfrac{1}{2} 
           \sum_{\sigma \in S_n} \sgn(\sigma)
           \prod_{j = 1}^n 
           \sum_{k = 0}^\infty & 
           \left( h_{k - n + j} \factorial{x, \overline{x}}{\tau^{1 - n - j} c}{} + h_{k - n - j + 2} \factorial{x, \overline{x}}{\tau^{-1 - n + j} c}{} \right) 
           \cdot
           \\ &
           \cdot
           \dfrac{u_{\sigma(j)}^{k}}{ \prod_{l = 0}^{k} (1 - u_{\sigma(j)} c_l) }.
        \end{split}
    \end{equation}
    Now it is left to calculate the following:
    \[
        \begin{gathered}
            \sum_{k = 0}^\infty 
            \left( 
                h_{k - n + j} \factorial{x, \overline{x}}{\tau^{1 - n - j} c}{} + h_{k - n - j + 2} \factorial{x, \overline{x}}{\tau^{-1 - n + j} c}{} 
            \right) 
            \dfrac{u^{k}}{ \prod_{l = 0}^{k} (1 - u c_l) } 
            = \\ = 
            \sum_{k = 0}^\infty 
            h_k \factorial{x, \overline{x}}{\tau^{1 - n - j} c}{} 
            \dfrac{u^{k + n - j}}{ \prod_{l = 0}^{k + n - j} (1 - u c_l) } 
            + 
            \sum_{k = 0}^\infty 
            h_k \factorial{x, \overline{x}}{\tau^{-1 - n + j} c}{} 
            \dfrac{u^{k + n + j - 2}}{ \prod_{l = 0}^{k + n + j - 2} (1 - u c_l) }.
        \end{gathered}
    \]
    The first sum is equal to 
    \[
        \dfrac{u^{n - j}}{ \prod_{l = 0}^{n - j} (1 - u c_l) }
        \cdot
        \sum_{k = 0}^\infty 
        h_k \factorial{x, \overline{x}}{\tau^{1 - n - j} c}{} 
        \dfrac{u^k}{ \prod_{l = 1}^{k} (1 - u c_{n - j + l}) }.
    \]
    Recalling the generating function of the complete factorial symmetric polynomials~\eqref{eq: h_k convenient generating function} and noticing that the number of variables in the $h_k$'s is $2n$, we get 
    \[
        \dfrac{u^{n - j}}{ \prod_{l = 0}^{n - j} (1 - u c_l) }
        \cdot
        \dfrac
        {
            \prod_{l = 0}^{2n - 1} (1 - u c_{1 - n - j + l})
        }
        {
            \prod_{i = 1}^n (1 - x_i u) (1 - \overline{x}_i u)
        }.
    \]
    Using the assumption $c_n = 0$ for $n < 0$, we are left with only 
    \[
        \dfrac{ u^{n - j} }{\prod_{i = 1}^n (1 - x_i u) (1 - \overline{x}_i u)}.
    \]
    The second sum is dealt with similarly: 
    \[
        \begin{gathered}
            \dfrac{u^{n + j - 2}}{ \prod_{l = 0}^{n + j - 2} (1 - u c_l) }
            \cdot
            \sum_{k = 0}^\infty 
            h_k \factorial{x, \overline{x}}{\tau^{-1 - n + j} c}{} 
            \dfrac{u^k}{ \prod_{l = 1}^{k} (1 - u c_{n + j - 2 + l}) } 
            = \\ =
            \dfrac{ u^{n + j - 2} }{\prod_{i = 1}^n (1 - x_i u) (1 - \overline{x}_i u)}.
        \end{gathered}
    \]
    Substituting these into~\eqref{eq: jtc long sum}, we obtain
    \begin{equation}
        \begin{gathered}
            \dfrac{1}{2} 
            \sum_{\sigma \in S_n} \sgn(\sigma)
            \prod_{j = 1}^n 
            \dfrac{ u_{\sigma(j)}^{n - j} + u_{\sigma(j)}^{n + j - 2} }{\prod_{i = 1}^n (1 - x_i u_{\sigma(j)}) (1 - \overline{x}_i u_{\sigma(j)})} 
            = \\ =
            \dfrac{ 1 }{\prod_{i, j = 1}^n (1 - x_i u_j) (1 - \overline{x}_i u_j) } 
            \cdot 
            \dfrac{1}{2} 
            \sum_{\sigma \in S_n} \sgn(\sigma)
            \prod_{j = 1}^n 
            \left(
            u_{\sigma(j)}^{n - j} + u_{\sigma(j)}^{n + j - 2}
            \right).
        \end{gathered}
        \label{eq: jtc final}
    \end{equation}
    Here we see the determinant $\det \left[ u_i^{n - j} + u_i^{n + j - 2} \right]$, which can be easily computed through reduction to the type D Weyl denominator. The result is 
    \[
        \det \left[ u_i^{n - j} + u_i^{n + j - 2} \right] 
        = 
        2 \V(u_1, \dotsc, u_n) \prod_{i < j} (1 - u_i u_j).
    \]
    Thus~\eqref{eq: jtc final} transforms into 
    \[
        \V(u_1, \dotsc, u_n)
        \cdot
        \dfrac
        {
            \prod_{1 \le i < j \le n} (1 - u_i u_j)
        }
        {
            \prod_{i, j = 1}^n (1 - x_i u_j) (1 - \overline{x}_i u_j)
        },
    \]
    which is exactly what we awaited.
\end{proof}

The Jacobi--Trudi identites in the types B and D are proven in exactly the same fashion, so we omit the details.

\begin{theorem}
    Assume that $c_n = 0$ for all $n < 0$. Then we have the following identity:
    \begin{equation}
        \so_\lambda \factorial{x}{c}{} = 
        \det 
        \left[ 
            h_{\lambda_i - i + j} \factorial{x, \overline{x}, 1}{\tau^{- n - j} c}{} 
            - 
            h_{\lambda_i - i - j} \factorial{x, \overline{x}, 1}{\tau^{- n + j} c}{} 
        \right]_{i, j = 1}^n,
    \end{equation}
    where $h_k \factorial{x, \overline{x}, 1}{\cdot}{}$ are the complete factorial symmetric polynomials in the $2n + 1$ variables $x_1, \dotsc, x_n, \overline{x}_1, \dotsc, \overline{x}_n, 1$, and $h_k \factorial{x, \overline{x}, 1}{\cdot}{} = 0$ for $k < 0$.
    \label{Theorem: Jacobi-Trudi in type B}
\end{theorem}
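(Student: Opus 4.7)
The plan is to adapt the proof of Theorem~\ref{Theorem: Jacobi-Trudi in type C} verbatim, substituting Theorem~\ref{Theorem: SO(2n + 1) Littlewood} for Theorem~\ref{Theorem: Sp Littlewood}. For $\alpha \in \mathbb{Z}^n$, introduce
\[
    \jt_\alpha^B\factorial{x}{c}{} = \det\Bigl[h_{\alpha_i - n + j}\factorial{x,\overline{x},1}{\tau^{-n-j}c}{} - h_{\alpha_i - n - j}\factorial{x,\overline{x},1}{\tau^{-n+j}c}{}\Bigr]_{i,j=1}^n,
\]
so that the theorem reads $\so_\lambda\factorial{x}{c}{} = \jt_{\lambda+\rho}^B\factorial{x}{c}{}$ with $\rho = (n-1,\dotsc,0)$. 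Since the dual Schur functions $\widehat{s}_\lambda\factorial{u}{c}{}$ are linearly independent (they have distinct leading monomials as formal series), it is enough in view of~\eqref{Eq: SO(2n + 1) Littlewood} to show
\[
    \sum_{\ell(\lambda) \le n} \jt_{\lambda+\rho}^B\factorial{x}{c}{}\,\widehat{s}_\lambda\factorial{u}{c}{}
    =
    \dfrac{1}{\prod_{j=1}^n(1-u_j)}\cdot\dfrac{\prod_{1 \le i \le j \le n}(1-u_iu_j)}{\prod_{i,j=1}^n(1-x_iu_j)(1-\overline{x}_iu_j)}.
\]

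Multiplying through by $\V(u_1,\dotsc,u_n)$, expanding the determinants, and using antisymmetry of $\jt^B_\gamma$ to drop ordering on $\gamma$, exactly as in the type C proof, the left-hand side rearranges to $\tfrac{1}{2}\sum_{\sigma \in S_n}\sgn(\sigma)\prod_{j=1}^n \Phi_j(u_{\sigma(j)})$, where
\[
    \Phi_j(u) = \sum_{k=0}^\infty\Bigl(h_{k-n+j}\factorial{x,\overline{x},1}{\tau^{-n-j}c}{} - h_{k-n-j}\factorial{x,\overline{x},1}{\tau^{-n+j}c}{}\Bigr)\dfrac{u^k}{\prod_{l=0}^k(1-uc_l)}.
\]
Applying the generating function~\eqref{eq: h_k convenient generating function} with the $2n+1$ variables $x_1,\dotsc,x_n,\overline{x}_1,\dotsc,\overline{x}_n,1$ to each of the two inner sums produces leftover products $\prod_{l=-n-j}^{-1}(1-uc_l)$ and $\prod_{l=-n+j}^{-1}(1-uc_l)$; the hypothesis $c_m = 0$ for $m < 0$ wipes these out, and $\Phi_j(u)$ collapses to $\dfrac{u^{n-j}-u^{n+j}}{(1-u)\prod_{i=1}^n(1-x_iu)(1-\overline{x}_iu)}$.

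Assembling, the whole sum becomes
\[
    \dfrac{\det[u_i^{n-j} - u_i^{n+j}]_{i,j=1}^n}{\prod_{j=1}^n(1-u_j)\cdot\prod_{i,j=1}^n(1-x_iu_j)(1-\overline{x}_iu_j)}.
\]
The main obstacle, and the only step genuinely different from the type C argument, is the determinantal identity
\[
    \det\bigl[u_i^{n-j} - u_i^{n+j}\bigr]_{i,j=1}^n
    =
    \V(u_1,\dotsc,u_n)\prod_{j=1}^n(1-u_j^2)\prod_{1 \le i < j \le n}(1-u_iu_j),
\]
which plays the role filled by the type D Weyl denominator in the type C proof. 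I would establish it by noting that the left-hand side vanishes when $u_i = u_j$, when $u_i = \pm 1$ (each entry becomes zero), and when $u_iu_j = 1$ (in that case rows $i$ and $j$ become scalar multiples of each other with ratio $-u_i^{-2n}$), whence it is divisible by the right-hand side; comparison of total degrees $n(3n+1)/2$ fixes the quotient as a constant, which equals $1$ by inspecting $n=1$. Alternatively, one factors $u_i^{-n}$ from each row to reduce to the type B Weyl denominator~\eqref{Eq: so denominator}. Using $\prod_j(1-u_j^2)/\prod_j(1-u_j) = \prod_j(1+u_j)$ and then dividing by $\V(u_1,\dotsc,u_n)$ recovers the right-hand side of~\eqref{Eq: SO(2n + 1) Littlewood}, completing the argument.
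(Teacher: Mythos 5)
Your proposal is correct and is precisely the argument the paper intends: the paper omits the proof of Theorem~\ref{Theorem: Jacobi-Trudi in type B}, stating only that it is proven ``in exactly the same fashion'' as Theorem~\ref{Theorem: Jacobi-Trudi in type C}, and your adaptation --- the collapse of $\Phi_j(u)$ to $\frac{u^{n-j}-u^{n+j}}{(1-u)\prod_i(1-x_iu)(1-\overline{x}_iu)}$ under the hypothesis $c_m=0$ for $m<0$, followed by the evaluation $\det\bigl[u_i^{n-j}-u_i^{n+j}\bigr]=\V(u_1,\dotsc,u_n)\prod_j(1-u_j^2)\prod_{i<j}(1-u_iu_j)$ --- is exactly that. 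One slip to fix: the factor $\tfrac{1}{2}$ in front of $\sum_{\sigma}\sgn(\sigma)\prod_j\Phi_j(u_{\sigma(j)})$ should not be there (it is an artifact of the type C template, where $\jt^C$ carries a $\tfrac12$ compensated by the doubled first column of $\det[u_i^{n-j}+u_i^{n+j-2}]$); here $\jt^B$ has no prefactor, the determinant identity has no factor of $2$, and your own final assembled expression correctly omits the $\tfrac12$.
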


In order to make use of the type D Littlewood identity, we need the condition that the sequence $\mathcal{F}$ is constant-term free. In the case of factorial characters this is equivalent to $c_0 = 0$.

\begin{theorem}
    Assume that $c_n = 0$ for all $n < 0$ and also $c_0 = 0$. Then we have the following identity:
    \begin{equation}
        \o_\lambda \factorial{x}{c}{} = 
        \det 
        \left[ 
            h_{\lambda_i - i + j} \factorial{x, \overline{x}}{\tau^{1 - n - j} c}{} 
            - 
            h_{\lambda_i - i - j} \factorial{x, \overline{x}}{\tau^{1 - n + j} c}{} 
        \right]_{i, j = 1}^n,
    \end{equation}
    where $h_k \factorial{x, \overline{x}}{\cdot}{}$ are the complete factorial symmetric polynomials in the $2n$ variables $x_1, \dotsc, x_n, \overline{x}_1, \dotsc, \overline{x}_n$, and $h_k \factorial{x, \overline{x}}{\cdot}{} = 0$ for $k < 0$.
    \label{Theorem: Jacobi-Trudi in type D}
\end{theorem}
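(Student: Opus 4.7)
The strategy is to mimic the proof of Theorem~\ref{Theorem: Jacobi-Trudi in type C} almost verbatim, substituting the Type D Littlewood identity~\eqref{Eq: SO(2n) Littlewood} for the symplectic one. For $\alpha \in \mathbb{N}^n$, set
\[
    \jt_\alpha^D \factorial{x}{c}{}
    =
    \det\!\left[ h_{\alpha_i - n + j}\factorial{x, \overline{x}}{\tau^{1 - n - j} c}{} - h_{\alpha_i - n - j}\factorial{x, \overline{x}}{\tau^{1 - n + j} c}{} \right]_{i,j=1}^n,
\]
so that the claimed right-hand side is $\jt^D_{\lambda+\rho}\factorial{x}{c}{}$ with $\rho = (n-1, \dotsc, 0)$. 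Since the dual Schur functions $\widehat{s}_\lambda\factorial{u}{c}{}$ are linearly independent, Theorem~\ref{Theorem: SO(2n) Littlewood} reduces the claim to verifying
\[
    \sum_{\ell(\lambda) \le n} \jt^D_{\lambda+\rho}\factorial{x}{c}{}\, \widehat{s}_\lambda\factorial{u}{c}{}
    =
    \dfrac{\prod_{1 \le i \le j \le n}(1 - u_i u_j)}{\prod_{i, j = 1}^n (1 - x_i u_j)(1 - \overline{x}_i u_j)}.
\]

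Multiplying by $\V(u_1, \dotsc, u_n)$, using skew-symmetry of $\jt^D_\alpha$ in $\alpha$ to enlarge the sum to all $\alpha \in \mathbb{N}^n$, expanding the determinant over $S_n$, and interchanging the order of summation exactly as in the Type C proof reduces the task to evaluating, for each $j$, the inner sum
\[
    \sum_{k=0}^\infty \bigl( h_{k-n+j}\factorial{x, \overline{x}}{\tau^{1-n-j}c}{} - h_{k-n-j}\factorial{x, \overline{x}}{\tau^{1-n+j}c}{} \bigr) \dfrac{u^k}{\prod_{l=0}^k (1-u c_l)}.
\]
By the generating function~\eqref{eq: h_k convenient generating function} with $2n$ variables, together with the hypothesis $c_m = 0$ for $m<0$, the first term collapses to $u^{n-j}/\prod_i(1-x_iu)(1-\overline{x}_iu)$ exactly as in Type C. The second term is handled analogously via the shift $k \mapsto k + n + j$ and yields $u^{n+j}/\prod_i(1-x_iu)(1-\overline{x}_iu)$, except that in the boundary case $j = n$ numerator and denominator fail to cancel by exactly one factor $(1-uc_0)$, and this is precisely what the extra hypothesis $c_0 = 0$ is there to kill. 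Each inner sum therefore equals $(u^{n-j} - u^{n+j})/\prod_i(1-x_iu)(1-\overline{x}_iu)$.

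Reassembling, the whole expression becomes $\det[u_i^{n-j} - u_i^{n+j}]_{i,j=1}^n$ divided by $\prod_{i,j}(1-x_iu_j)(1-\overline{x}_iu_j)$, so the last ingredient is the alternant identity
\[
    \det\!\left[ u_i^{n-j} - u_i^{n+j} \right]_{i,j=1}^n
    =
    \V(u_1, \dotsc, u_n) \prod_{1 \le i \le j \le n}(1-u_iu_j).
\]
This is proved by a standard vanishing argument: the determinant is zero whenever $u_i = u_j$, whenever $u_iu_j = 1$, and whenever $u_i = \pm 1$, forcing divisibility by $\V(u)\prod_{i \le j}(1-u_iu_j)$; a comparison of total degrees together with the leading coefficient (obtained by selecting the $-u_i^{n+j}$ summand in every entry and factoring as a Vandermonde) pins the quotient down to $1$. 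The main obstacle, such as it is, is pinpointing the precise role of the assumption $c_0 = 0$ — it enters only through the $j=n$ boundary of the second inner sum — and keeping the shifts of $c$ straight throughout the bookkeeping.
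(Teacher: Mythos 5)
Your proof is correct and follows exactly the route the paper intends: the paper omits the type D argument, stating it is proven ``in exactly the same fashion'' as Theorem~\ref{Theorem: Jacobi-Trudi in type C}, and your adaptation --- defining $\jt^D_\alpha$, reducing via Theorem~\ref{Theorem: SO(2n) Littlewood} and linear independence of the $\widehat{s}_\lambda\factorial{u}{c}{}$ to the inner-sum computation, and evaluating $\det\left[u_i^{n-j}-u_i^{n+j}\right]_{i,j=1}^n=\V(u_1,\dotsc,u_n)\prod_{1\le i\le j\le n}(1-u_iu_j)$ --- is precisely what that entails, including the sharp observation that $c_0=0$ rescues the $j=n$ boundary of the second inner sum. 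The one small imprecision is the claim that $c_0=0$ enters \emph{only} there: it is also needed to make the factorial sequence constant-term free so that Theorem~\ref{Theorem: SO(2n) Littlewood} is applicable in the first place, which is the reason the paper itself gives for the hypothesis.
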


\subsection{Classical type A case}

Let $\lambda = (\lambda_1, \dotsc, \lambda_n)$ be a signature. As above, decompose it into a pair of partitions $\lambda = (\mu, \nu)$, where $\mu$ and $\nu$ are of lengths $\ell(\mu) \le p$ and $\ell(\nu) \le q$, and $p + q = n$.

There is the following analogue of the Jacobi--Trudi identity for the rational Schur function $s_\lambda(x_1, \dotsc, x_n)$; it was postulated by Balantekin and Bars~\cite{Balantekin_Bars} in dual form (i.e. N\"{a}gelsbach--Kostka) and proven by Cummins and King~\cite{Cummins_King}, see also Koike~\cite[Proposition~2.8]{Koike}. Let us denote 
\[
    h_k^* (x_1, \dotsc, x_n) = s_{(k, 0, \dotsc, 0)}(x_1^{-1}, \dotsc, x_n^{-1}),
\]
i.e.~the character of $S^k(\mathbb{C}^n)^*$, where $\mathbb{C}^n$ is the tautological representation of $\GL(n, \mathbb{C})$. In other words, 
\[
    h_k^* (x_1, \dotsc, x_n) = s_{(k, 0, \dotsc, 0)^*}(x_1, \dotsc, x_n),
\]
where $(k, 0, \dotsc, 0)^* = (0, \dotsc, 0, -k)$ is the dual of the highest weight $(k, 0, \dotsc, 0)$. 
Set also $h_k^* (x_1, \dotsc, x_n) = 0$ for negative $k$. Then
\begin{equation}
    s_\lambda (x_1, \dotsc, x_n) 
    = 
    \det 
    \left[\begin{array}{c}
        h_{\nu_{q - i + 1} + i - j}^* (x_1, \dotsc, x_n) \\
        \hdashline
        h_{\mu_i - q - i + j} (x_1, \dotsc, x_n)
    \end{array}\right],
\end{equation}
where on the right we have a block matrix consisting of two blocks of sizes $q \times n$ and $p \times n$. The indices in the upper block run through $i = 1, \dotsc, q$ and $j = 1, \dotsc, n$, and in the lower block through $i = 1, \dotsc, p$ and $j = 1, \dotsc, n$.

We aim for a similar identity for the factorial Schur functions $s_\lambda \factorial{x}{c}{}$. We will replace $h_k^* (x_1, \dotsc, x_n)$ with $g_k \factorial{x}{c}{}$, where we define
    \begin{equation}
        g_k \factorial{x}{c}{} 
        = 
        \begin{cases}
            0, & \mbox{\textup{if} $k < 0$}; \\
            s_{(k, 0, \dotsc, 0)^* - (1^n)} \factorial{x}{c}{}, & \mbox{\textnormal{if} $k \ge 0$}.
        \end{cases}
    \end{equation}
We note, however, that $g_k \factorial{x}{c}{}$ does not specialise into $h_k^* (x_1, \dotsc, x_n)$ if we set $c_m = 0$ for all $m \in \mathbb{Z}$, but rather into $(x_1 \dotsm x_n)^{-1} h_k^* (x_1, \dotsc, x_n)$.

\subsection{Generating function}

First, we need the generating function of the $g_k \factorial{x}{c}{}$'s. Recall the reversed sequence $\widetilde{c}$ from Example~\ref{Example: double dual} we defined by $\widetilde{c}_n = c_{-n - 1}$, $n \in \mathbb{Z}$.

\begin{proposition}
    We have 
    \begin{equation} \label{eq: dual generating function}
        \sum_{k = 0}^\infty 
        g_k \factorial{x}{c}{} 
        \,
        \factorial{ t }{ \widetilde{c} }{ k } 
        = 
        \dfrac{ 1 }{ \prod_{i = 1}^{n} (x_i - t) }.
    \end{equation}
\end{proposition}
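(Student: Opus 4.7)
The plan is to obtain this generating function by specialising the type~A Littlewood identity (Theorem~\ref{Theorem: GL Littlewood}), in exact analogy with the earlier derivation of the generating function~\eqref{eq: h_k generating function} from the Cauchy identity. Concretely, I would apply Theorem~\ref{Theorem: GL Littlewood} with $q=1$ and $p=n-1$, and then successively substitute $u_{n-1}=u_{n-2}=\dotsb=u_1=0$.

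For the left-hand side I would reuse, essentially verbatim, the computation already performed when extracting~\eqref{eq: h_k generating function}: iteratively setting $u_{p}=0,\,u_{p-1}=0,\dotsc$ in $\widehat{s}_\mu^\mathcal{F}(u_1,\dotsc,u_p)$ forces $\mu_p=\dotsb=\mu_1=0$ and contributes the factor $1$ when $\mu=\varnothing$. Consequently only the terms with $\mu=\varnothing$ and $\nu=(k)$ for some $k\ge 0$ survive in the Littlewood sum. For such a $\lambda=(\varnothing,(k))=(0,\dotsc,0,-k)$ one has
\[
\lambda-(1^n)=(-1,\dotsc,-1,-k-1)=(k,0,\dotsc,0)^{*}-(1^n),
\]
so that $s_{\lambda-(1^n)}^\mathcal{F}(x_1,\dotsc,x_n)=g_k\factorial{x}{c}{}$ by definition of $g_k$. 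The double dual factor is computed from its definition and the identification $\widecheck{f}_{k+1}(v)=v\factorial{v}{\widetilde{c}}{k}$ established in the double dual example:
\[
\widecheck{s}_{(k)}^\mathcal{F}(v_1)=\frac{\widecheck{f}_{k+1}(v_1)}{v_1}=\factorial{v_1}{\widetilde{c}}{k}.
\]
Thus, after specialisation, the left-hand side of~\eqref{Eq: GL Littlewood} reduces to $\sum_{k\ge 0}g_k\factorial{x}{c}{}\factorial{v_1}{\widetilde{c}}{k}$.

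On the right-hand side of~\eqref{Eq: GL Littlewood}, setting $u_1=\dotsb=u_{n-1}=0$ immediately kills the numerator factors other than~$1$ and the denominator factors $(1-x_ku_i)$, leaving
\[
\frac{1}{\prod_{k=1}^n (x_k-v_1)}.
\]
Renaming $v_1$ as $t$ completes the proof. The only place that requires any care is the zero substitution in $\widehat{s}_\mu^\mathcal{F}$; but this is exactly the step already performed in the paper, so there is no serious obstacle.
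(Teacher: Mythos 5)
Your argument is correct, and it reaches the identity by a genuinely different specialisation of the type~A Littlewood identity than the paper uses. The paper takes the extreme case $p=0$, $q=n$, so that only the double dual objects $s_\nu\factorial{v}{\widetilde c}{}$ appear, and then kills all but one row of the sum by substituting the $v$-variables at the \emph{nonzero} points $v_n=\widetilde c_0,\ v_{n-1}=\widetilde c_1,\dotsc,v_2=\widetilde c_{n-2}$, where the factorial powers vanish; the price is a shift bookkeeping step, since what survives is $g_k\factorial{x}{\tau^{-n+1}c}{}$ paired with $\factorial{t}{\tau^{n-1}\widetilde c}{k}$, and one must invoke $\tau^{n-1}\widetilde c=\widetilde{\tau^{-n+1}c}$ and rename $c$ at the end. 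You instead take $p=n-1$, $q=1$ and set the $u$-variables to zero, which is exactly the substitution already carried out for~\eqref{eq: h_k generating function}; this forces $\mu=\varnothing$ with surviving coefficient $1$, and because $q=1$ the index $\lambda-(q^n)=(k,0,\dotsc,0)^*-(1^n)$ matches the definition of $g_k\factorial{x}{c}{}$ on the nose, while $\widecheck{s}_{(k)}^{\mathcal F}(v_1)=\widecheck f_{k+1}(v_1)/v_1=\factorial{v_1}{\widetilde c}{k}$ comes straight from the one-variable case of~\eqref{Eq: double dual Schur function} and the computed double dual of the factorial sequence. Your route therefore avoids the shift manipulations entirely and is somewhat shorter; the trade-off is that it uses the Littlewood identity in its general two-parameter form $(p,q)=(n-1,1)$ rather than only the degenerate $p=0$ case, but since Theorem~\ref{Theorem: GL Littlewood} is proved in that generality anyway, nothing is lost. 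All the individual steps (the vanishing of $\widehat s_\mu$ at $u=0$ unless $\mu=\varnothing$, the evaluation of the right-hand side, the identification of $\widecheck s_{(k)}$) check out.
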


\begin{proof}
    Consider the type A Littlewood identity~\eqref{Eq: GL Littlewood} with $p = 0$:
    \begin{equation} \label{eq: dual cauchy}
            \sum_{\lambda = (\emptyset, \nu)} 
            s_{\lambda - (n^n)} \factorial{x_1, \dotsc, x_n}{c}{} 
            \, 
            s_\nu \factorial{v_1, \dotsc, v_n}{\widetilde{c}}{} 
            = 
            \dfrac{ 1 }{ \prod_{i = 1}^{n} \prod_{j = 1}^{n} (x_i - v_j) }.
    \end{equation}
    On the left we have $\lambda = (\emptyset, \nu) = \nu^*$, and $\nu$ ranges over the set of all partitions of length $\ell(\nu) \le n$.

    Set $v_n = \widetilde{c}_0 = c_{-1}$. It is easy to see that 
    \[
        s_\nu \factorial{v_1, \dotsc, v_n}{\widetilde{c}}{} \, |_{v_n = \widetilde{c}_0}
        = 
        \delta_{\nu_n, 0} \, 
        s_\nu \factorial{v_1, \dotsc, v_{n - 1}}{\tau \widetilde{c}}{}.
    \]
    Set further $v_{n - 1} = \widetilde{c}_1 = c_{-2}$, \ldots, $v_2 = \widetilde{c}_{n - 2} = c_{- n + 1}$. Then we get 
    \[
        \delta_{\nu_n, 0} \dotsm \delta_{\nu_2, 0} \, 
        s_\nu \factorial{v_1}{\tau^{n - 1} \widetilde{c}}{},
    \]
    which is equal to
    \[
        \delta_{\nu_n, 0} \dotsm \delta_{\nu_2, 0} \,
        \factorial{v_1}{\tau^{n - 1} \widetilde{c}}{\nu_1}.
    \]
    Hence on the left of the equation~\eqref{eq: dual cauchy} we see 
    \[
        \sum_{k = 0}^\infty
        s_{(k, 0, \dotsc, 0)^* - (n^n)} \factorial{x_1, \dotsc, x_n}{c}{} 
        \factorial{v_1}{\tau^{n - 1} \widetilde{c}}{ k }.
    \]
    At the same time, on the right we have 
    \[
        \dfrac{ 1 }
        { 
            \prod_{i = 1}^{n} (x_i - v_1) \cdot \prod_{i = 1}^{n} \factorial{x_i}{\widetilde{c}}{n - 1} 
        }.
    \]
    Now note that 
    \[
        s_{(k, 0, \dotsc, 0)^* - (n^n)} \factorial{x_1, \dotsc, x_n}{c}{} 
        = 
        \prod_{i = 1}^n \factorial{x_i}{c}{ -n + 1 } 
        \cdot 
        s_{(k, 0, \dotsc, 0)^* - (1^n)} \factorial{x_1, \dotsc, x_n}{\tau^{-n + 1} c}{},
    \]
    and we can rewrite this as 
    \[
        \dfrac{ 1 }
        { 
            \prod_{i = 1}^{n} \factorial{x_i}{\widetilde{c}}{n - 1} 
        }
        \,
        g_k \factorial{x}{\tau^{-n + 1} c}{}.
    \]
    Thus~\eqref{eq: dual cauchy} implies, letting $t = v_1$, that 
    \[
        \sum_{k = 0}^\infty 
        g_k \factorial{x}{\tau^{-n + 1} c}{} 
        \,
        \factorial{ t }{ \tau^{n - 1} \widetilde{c} }{ k } 
        = 
        \dfrac{ 1 }{ \prod_{i = 1}^{n} (x_i - t) }.
    \]
    Now it remains to notice that $\tau^{n - 1} \widetilde{c} = \widetilde{\tau^{-n + 1} c}$, and the theorem follows.
\end{proof}

\subsection{Jacobi--Trudi identity}

Now we are ready to proceed with the Jacobi--Trudi identity.

\begin{theorem}
    Decompose the signature $\lambda$ as $\lambda = (\mu, \nu)$. Then we have
    \begin{equation}
        s_{\lambda - (q^n)} \factorial{x}{c}{} 
        = 
        \det 
        \left[\begin{array}{c}
            g_{\nu_{q - i + 1} + i - j} \factorial{x}{\tau^{1 - j} c}{} \\
            \hdashline
            h_{\mu_i - q - i + j} \factorial{x}{\tau^{1 - j} c}{}
        \end{array}\right],
        \label{eq: factorial JT A}
    \end{equation}
    where in the upper block $i = 1, \dotsc, q$, in the lower block $i = 1, \dotsc, p$, and $j = 1, \dotsc, n$ in both blocks.
    \label{Theorem: Jacobi-Trudi in type A}
\end{theorem}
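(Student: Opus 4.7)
The strategy parallels the proof of Theorem~\ref{Theorem: Jacobi-Trudi in type C}. Denote the right-hand side of~\eqref{eq: factorial JT A} by $D_{\mu,\nu}\factorial{x}{c}{}$. By Theorem~\ref{Theorem: GL Littlewood} combined with the fact that the products $\widehat{s}_\mu\factorial{u}{c}{}\widecheck{s}_\nu\factorial{v}{c}{}$ are linearly independent as formal power series in $u_1,\ldots,u_p,v_1,\ldots,v_q$ (each has distinct leading term $s_\mu(u)s_\nu(v)$ in lowest total degree), it is enough to establish the generating function identity
\[
    \sum_{\mu,\nu}D_{\mu,\nu}\factorial{x}{c}{}\,\widehat{s}_\mu\factorial{u}{c}{}\,\widecheck{s}_\nu\factorial{v}{c}{}=\frac{\prod_{i,j}(1-u_iv_j)}{\prod_{k,i}(1-x_ku_i)\prod_{k,j}(x_k-v_j)}.
\]

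First I would multiply through by $\V(u_1,\ldots,u_p)\V(v_1,\ldots,v_q)$, converting $\widehat{s}_\mu$ and $\widecheck{s}_\nu$ into the corresponding $p\times p$ and $q\times q$ determinants, and expand all three determinants as signed sums over symmetric groups; $D_{\mu,\nu}$ is expanded via the Laplace decomposition that splits the column set into a $q$-subset $I$ feeding the $g$-block and its $p$-subset complement feeding the $h$-block. Once the orders of summation are interchanged, each component $\mu_a$ (resp.\ $\nu_b$) appears in exactly one factor from $D_{\mu,\nu}$ and one factor from $\widehat{s}_\mu$ (resp.\ $\widecheck{s}_\nu$), so the sum over $(\mu,\nu)\in\mathbb{N}^p\times\mathbb{N}^q$ decouples into $n$ independent one-variable sums indexed by columns.

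The two pieces of computational input are the generating functions~\eqref{eq: h_k convenient generating function} and~\eqref{eq: dual generating function}. For columns in the $h$-block, \eqref{eq: h_k convenient generating function} (applied with $c$ replaced by $\tau^{1-j}c$, and with the assumption $c_n=0$ for $n<0$ collapsing extraneous factors $\prod(1-tc_l)$) yields a factor of the form $u^{\text{shift}}/\prod_k(1-x_ku)$. For columns in the $g$-block, the analogous role is played by~\eqref{eq: dual generating function} together with the identity $\widecheck{f}_k(v)/v=\factorial{v}{\widetilde{c}}{k-1}$, producing a factor $v^{\text{shift}}/\prod_k(x_k-v)$. Reassembling these factors column by column and performing the remaining symmetric group sum collapses everything into an $n\times n$ Cauchy-type determinant of mixed entries $(1-x_ku_i)^{-1}$ and $(x_k-v_j)^{-1}$, identical to the one already evaluated in the proof of Theorem~\ref{Theorem: GL Littlewood}. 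Dividing through by $\V(u)\V(v)$ then yields the generating function identity.

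The main obstacle is the bookkeeping needed to reconcile three separate indexings: the reversed indexing $\nu_{q-i+1}$ on the upper block (absorbed with a $(-1)^{\binom{q}{2}}$ sign after reversing the row order), the column-dependent shift $\tau^{1-j}c$, and the global shift $-(q^n)$ appearing on the left-hand side. The assumption $c_n=0$ for $n<0$ must be invoked precisely when it cancels the right factors of $(1-tc_l)$, exactly as in the type C argument.
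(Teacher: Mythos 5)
Your overall strategy is the paper's: reduce the Jacobi--Trudi determinant to the type~A Littlewood identity, decouple the sum over $(\mu,\nu)$ into $n$ independent one-variable sums via the symmetric-group expansion, and feed in the generating functions~\eqref{eq: h_k convenient generating function} and~\eqref{eq: dual generating function}. However, there is one substantive problem: you invoke the assumption $c_n=0$ for $n<0$ ``exactly as in the type C argument,'' but Theorem~\ref{Theorem: Jacobi-Trudi in type A}, unlike Theorems~\ref{Theorem: Jacobi-Trudi in type C}--\ref{Theorem: Jacobi-Trudi in type D}, is stated \emph{without} that hypothesis, and the proof must work for an arbitrary doubly infinite sequence $c$. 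Concretely, the column sum for the $h$-block produces the leftover factor $u^{n-j}\prod_{s=1}^{j-1}(1-u\,c_{s-j})$, and the $g$-block column sum produces $\factorial{v}{\widetilde{c}}{j-1}$; neither collapses to a pure monomial in general. The paper's resolution is to observe that $u_i^{n-j}\prod_{s=1}^{j-1}(1-u_i c_{s-j})=u_i^{n-1}\factorial{u_i^{-1}}{\widetilde{c}}{j-1}$, so that the residual $n\times n$ determinant has all its columns built from the single family $\factorial{\,\cdot\,}{\widetilde{c}}{j-1}$; column operations then show it is independent of $\widetilde{c}$ and equal to $\V(u_1,\dotsc,u_p)\V(v_1,\dotsc,v_q)\prod_{i,j}(1-u_iv_j)$. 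Without this step your argument only proves the theorem under an extra hypothesis it does not carry.

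A second, related inaccuracy: the object the computation collapses to is \emph{not} the mixed Cauchy determinant with entries $(1-x_ku_i)^{-1}$ and $(x_k-v_j)^{-1}$ from the proof of Theorem~\ref{Theorem: GL Littlewood}. Each column of the expanded sum contributes the \emph{full} product $\prod_{k}(1-x_ku_{\sigma(j)-q})^{-1}$ or $\prod_k(x_k-v_{q-\sigma(j)+1})^{-1}$, and since $\sigma$ is a bijection these $x$-dependent factors pull out of the alternating sum uniformly as $\prod_{k,i}(1-x_ku_i)^{-1}\prod_{k,j}(x_k-v_j)^{-1}$. What remains inside is a determinant purely in the $u$'s, $v$'s and $\widetilde{c}$ (the one discussed above), of ``dual Cauchy'' rather than Cauchy type. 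The rest of your outline (linear independence of the products $\widehat{s}_\mu\,\widecheck{s}_\nu$, the role of skew-symmetry in passing from strictly decreasing tuples to all tuples, the sign bookkeeping from the reversed indexing) is correct and matches the paper.
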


\begin{proof}
    For an arbitrary sequence $\varepsilon = (\varepsilon_1, \dotsc, \varepsilon_n) \in \mathbb{Z}^n$ define
    \begin{equation} \label{eq: jt A def}
        \jt_\varepsilon^A \factorial{x}{c}{} 
        = 
        \det
        \left[\begin{array}{c}
            g_{ - \varepsilon_{p + i} + q - j } \factorial{x}{\tau^{1 - j} c}{} \\
            \hdashline
            h_{ \varepsilon_i - n - q + j } \factorial{x}{\tau^{1 - j} c}{}
        \end{array}\right],
    \end{equation}
    where in the upper block $i = 1, \dotsc, q$, in the lower block $i = 1, \dotsc, p$, and $j = 1, \dotsc, n$ in both blocks.
    Then on the right-hand side of~\eqref{eq: factorial JT A} we have $\jt_{\lambda + \rho}^A \factorial{x}{c}{}$, where $\rho = (n - 1, \dotsc, 0)$.

    Due to the Littlewood identity, we want to prove that the sum
    \begin{equation} \label{eq: jt A}
        \sum_{\lambda = (\mu, \nu)} 
        \jt_{\lambda + \rho}^A \factorial{x}{c}{} 
        \, 
        \widehat{s}_\lambda \factorial{u}{c}{} 
        \, 
        s_\nu\factorial{v}{\widetilde{c}}{}
    \end{equation}
    is equal to
    \[
        \dfrac
        {
            \prod_{i = 1}^{p} \prod_{j = 1}^{q} (1 - u_i v_j)
        }
        {
            \prod_{k = 1}^{n} \prod_{i = 1}^{p} (1 - x_k u_i) 
            \cdot 
            \prod_{k = 1}^{n} \prod_{j = 1}^{q} (x_k - v_j)
        }.
    \]
    First, multiply the sum~\eqref{eq: jt A} by the Vandermondes $\V(u_1, \dotsc, u_p)$ and $\V(v_1, \dotsc, v_q)$.
    Then it becomes
    \begin{equation} \label{eq: jt A and two dets}
        \sum_{\gamma, \delta} 
        \jt_{(\gamma_1 + q, \dotsc, \gamma_p + q, - \delta_q + q - 1, \dotsc, -\delta_1 + q - 1)}^A \factorial{x}{c}{}
        \, 
        \det 
        \left[ 
            \dfrac{u_i^{\gamma_j}}{ \prod_{l = 0}^{\gamma_j} (1 - u_i c_l) } 
        \right]_{i, j = 1}^p
        \mathllap{\det} 
        \left[ 
            \factorial{v_i}{\widetilde{c}}{\delta_j} 
        \right]_{i, j = 1}^q,
    \end{equation}
    where the sum is taken over all the tuples $\gamma \in \mathbb{N}^p$ and $\delta \in \mathbb{N}^q$ with $\gamma_1 > \dotsb > \gamma_p \ge 0$ and $\delta_1 > \dotsb > \delta_q \ge 0$.

    Writing the two determinants as sums over symmetric groups and using skew-symmetry of $\jt_\varepsilon^A \factorial{x}{c}{}$, we can rewrite the sum~\eqref{eq: jt A and two dets} as 
    \[
        \sum_{\alpha, \beta} 
        \jt_{(\alpha_1 + q, \dotsc, \alpha_p + q, - \beta_q + q - 1, \dotsc, -\beta_1 + q - 1)}^A \factorial{x}{c}{}
        \, 
        \prod_{i = 1}^p
            \dfrac{u_i^{\alpha_i}}{ \prod_{l = 0}^{\alpha_i} (1 - u_i c_l) }
        \,
        \prod_{j = 1}^q 
            \factorial{v_j}{\widetilde{c}}{\beta_j},
    \]
    where the sum is taken over all tuples $\alpha \in \mathbb{N}^p$ and $\beta \in \mathbb{N}^q$ of nonnegative integers.
    Now we can introduce the factors on the right into the determinant~\eqref{eq: jt A def}:
    \[
        \sum_{\alpha, \beta} 
        \det
        \left[\begin{array}{c}
            \\[-1.5ex]
            g_{ \beta_{q - i + 1}  - j + 1 } \factorial{x}{\tau^{1 - j} c}{} \factorial{v_{q - i + 1}}{\widetilde{c}}{\beta_{q - i + 1}} \\
            \\[-1ex]
            \hdashline
            \\[-1.5ex]
            h_{ \alpha_i - n + j } \factorial{x}{\tau^{1 - j} c}{} \frac{u_i^{\alpha_i}}{ \prod_{l = 0}^{\alpha_i} (1 - u_i c_l) }
        \end{array}\right].
    \]
    Now we can expand the determinant as a sum over the symmetric group:
    \[
        \begin{gathered}
        \sum_{\alpha, \beta} 
        \sum_{\sigma \in S_n}
        \sgn(\sigma)
        \Biggl(
            \prod_{ \substack{j \in \{1, \dotsc, n\}: \\ 1 \le \sigma(j) \le q} }
            g_{ \beta_{q - \sigma(j) + 1}  - j + 1 } \factorial{x}{\tau^{1 - j} c}{} 
            \factorial{v_{q - \sigma(j) + 1}}{\widetilde{c}}{\beta_{q - \sigma(j) + 1}}
            \cdot \\ \cdot
            \prod_{ \substack{j \in \{1, \dotsc, n\}: \\ q + 1 \le \sigma(j) \le p + q} }
            h_{ \alpha_{\sigma(j) - q} - n + j } \factorial{x}{\tau^{1 - j} c}{} 
            \frac{u_{\sigma(j) - q}^{\alpha_{\sigma(j) - q}}}{ \prod_{l = 0}^{\alpha_{\sigma(j) - q}} \left( 1 - u_{\sigma(j) - q} c_l \right) }
        \Biggr).
        \end{gathered}
    \]
    Reordering the sums and carrying them inside the brackets we arrive at
    \begin{equation} \label{eq: jt A long sum}
        \begin{gathered}
            \sum_{\sigma \in S_n}
            \sgn(\sigma)
            \Biggl(
                \prod_{ \substack{j \in \{1, \dotsc, n\}: \\ 1 \le \sigma(j) \le q} }
                \sum_{k = 0}^\infty
                g_{ k - j + 1 } \factorial{x}{\tau^{1 - j} c}{} 
                \factorial{v_{q - \sigma(j) + 1}}{\widetilde{c}}{k}
                \cdot \\ \cdot
                \prod_{ \substack{j \in \{1, \dotsc, n\}: \\ q + 1 \le \sigma(j) \le p + q} }
                \sum_{k = 0}^\infty
                h_{ k - n + j } \factorial{x}{\tau^{1 - j} c}{} 
                \frac{u_{\sigma(j) - q}^k}{ \prod_{l = 0}^k \left( 1 - u_{\sigma(j) - q} c_l \right) }
            \Biggr).
            \end{gathered}
    \end{equation}
    Now let us calculate the two sums inside~\eqref{eq: jt A long sum}. The first is equal to 
    \[
        \begin{gathered}
            \sum_{k = 0}^\infty
                g_{ k - j + 1 } \factorial{x}{\tau^{1 - j} c}{} 
                \factorial{v}{\widetilde{c}}{k}
            =
            \sum_{k = 0}^\infty
                g_k \factorial{x}{\tau^{1 - j} c}{}
                \factorial{v}{\widetilde{c}}{k + j - 1}
            = \\ =
            \factorial{v}{\widetilde{c}}{j - 1}
            \sum_{k = 0}^\infty
                g_k \factorial{x}{\tau^{1 - j} c}{}
                \factorial{v}{\tau^{j - 1} \widetilde{c}}{k}
            = 
            \factorial{v}{\widetilde{c}}{j - 1}
            \dfrac{ 1 }{ \prod_{k = 1}^n (x_k - v) },
        \end{gathered}
    \]
    where in the last equality we use~\eqref{eq: dual generating function} together with the equality $\widetilde{\tau^{1 - j} c} = \tau^{j - 1} \widetilde{c}$.
    The second sum equals
    \[
        \begin{gathered}
            \sum_{k = 0}^\infty
                h_{ k - n + j } \factorial{x}{\tau^{1 - j} c}{} 
                \frac{u^k}{ \prod_{l = 0}^k \left( 1 - u c_l \right) }
            = \\ =
            \dfrac{u^{n - j}}{ \prod_{l = 0}^{n - j} (1 - u c_l) }
            \cdot
            \sum_{k = 0}^\infty
                h_k \factorial{x}{\tau^{1 - j} c}{} 
                \frac{u^k}{ \prod_{l = 1}^k \left( 1 - u c_{n - j + l} \right) }
            = \\ =
            u^{n - j} \prod_{s = 1}^{ j - 1 } (1 - u c_{s - j})
            \cdot
            \prod_{k = 1}^n \dfrac{1}{1 - x_k u}.
        \end{gathered}
    \]
    It follows that~\eqref{eq: jt A long sum} is equal to 
    \begin{equation} \label{eq: jt A final}
        \begin{gathered}
            \dfrac{1}{ \prod_{k = 1}^{n} \prod_{j = 1}^{q} (x_k - v_j) }
            \cdot
            \dfrac{1}{ \prod_{k = 1}^{n} \prod_{i = 1}^{p} (1 - x_k u_i) }
            \cdot
            \sum_{\sigma \in S_n}
            \sgn(\sigma)
            \Biggl(
                \prod_{ \substack{j \in \{1, \dotsc, n\}: \\ 1 \le \sigma(j) \le q} }
                \factorial{v_{q - \sigma(j) + 1}}{\widetilde{c}}{j - 1}
                \cdot \\ \cdot
                \prod_{ \substack{j \in \{1, \dotsc, n\}: \\ q + 1 \le \sigma(j) \le p + q} }
                u_{\sigma(j) - q}^{n - j} \prod_{s = 1}^{ j - 1 } (1 - u_{\sigma(j) - q} c_{s - j})
            \Biggr).
        \end{gathered}
    \end{equation}
    We can see that the following determinant appeared:
    \[
        \det
        \left[\begin{array}{c}
            \factorial{v_{q - i + 1}}{\widetilde{c}}{j - 1} 
            \\[1ex]
            \hdashline
            \\[-2ex]
            u_i^{n - j} \prod_{s = 1}^{ j - 1 } (1 - u_i c_{s - j})
        \end{array}\right],
    \]
    where as usual in the upper block $i = 1, \dotsc, q$, in the lower block $i = 1, \dotsc, p$, and $j = 1, \dotsc, n$ in both blocks. Now notice that $u_i^{n - j} \prod_{s = 1}^{ j - 1 } (1 - u_i c_{s - j}) = u_i^{n - 1} \factorial{u_i^{-1}}{\widetilde{c}}{j - 1}$. Hence our determinant is 
    \[
        \det
        \left[\begin{array}{c}
            \factorial{v_{q - i + 1}}{\widetilde{c}}{j - 1} 
            \\[1ex]
            \hdashline
            \\[-2ex]
            u_i^{n - 1} \factorial{u_i^{-1}}{\widetilde{c}}{j - 1}
        \end{array}\right],
    \]
    whence we see that it is independent of $\widetilde{c}$. It is easy to see that
    \[
        \det
        \left[\begin{array}{c}
            v_{q - i + 1}^{j - 1} 
            \\[1ex]
            \hdashline
            \\[-2ex]
            u_i^{n - j}
        \end{array}\right]
        =
        \V(u_1, \dotsc, u_p)
        \V(v_1, \dotsc, v_q)
        \prod_{i = 1}^{p} \prod_{j = 1}^{q} (1 - u_i v_j).
    \]
    Therefore,~\eqref{eq: jt A final} is equal to 
    \[
        \V(u_1, \dotsc, u_p)
        \V(v_1, \dotsc, v_q)
        \dfrac
        {
            \prod_{i = 1}^{p} \prod_{j = 1}^{q} (1 - u_i v_j)
        }
        {
            \prod_{k = 1}^{n} \prod_{i = 1}^{p} (1 - x_k u_i) 
            \cdot 
            \prod_{k = 1}^{n} \prod_{j = 1}^{q} (x_k - v_j)
        },
    \]
    as we expected. This concludes the proof of the theorem.
\end{proof}

%%%%%%%%%%%%%%%%%% Section

\section{N\"{a}gelsbach--Kostka identities}

In this section we introduce another ninth variation of the orthogonal and symplectic characters. We start from a Jacobi--Trudi identity, as originally done by Macdonald~\cite{Macdonald}. In this generality we derive orthogonal and symplectic N\"{a}gelsbach--Kostka identities.

\subsection{Original ninth variation}

First, we recall the original Macdonald's ninth variation of Schur functions.
Let $h_{r s}$ be independent indeterminates with $r \ge 1$ and $s \in \mathbb{Z}$. Set also $h_{0 s} = 1$ and $h_{r s} = 0$ for $r < 0$ and all $s \in \mathbb{Z}$. Consider the ring generated by the $h_{r s}$ and define an automorphism $\phi$ of this ring by $\phi (h_{r s}) = h_{r, s + 1}$ for all $r, s \in \mathbb{Z}$. Thus we can write $h_{r s} = \phi^s h_r$, where $h_r = h_{r 0}$.

For any two partitions $\lambda, \mu$ of lengths $\le n$ define
\[
    s_{\lambda / \mu} 
    = 
    \det \left[ \phi^{\mu_j + 1 - j} h_{\lambda_i -\mu_j - i + j} \right]_{i,j = 1}^n.
\]
For $\mu = \varnothing$ we have 
\[
    s_\lambda = \det \left[ \phi^{1 - j} h_{\lambda_i - i + j} \right]_{i,j = 1}^n.
\]
The same argument as in Macdonald~\cite[p.~71]{Macdonald-book} shows that $s_{\lambda / \mu} = 0$ unless $\mu \subset \lambda$, and $s_{\lambda / \mu} = 1$ when $\mu = \lambda$, due to the conditions imposed on the $h_{rs}$, $r \le 0$.

Specialising $h_{rs} = h_r \factorial{x}{\tau^s c}{}$ gives us the factorial Schur functions, as follows from the unflagged factorial Jacobi--Trudi identity~\eqref{eq: unflagged factorial identity}.
Setting $h_{rs} = s_{(r, 0, \dotsc, 0)}^\mathcal{F} (x_1, \dotsc, x_{n + s})$, we obtain the generalised Schur functions, due to the flagged Jacobi--Trudi identity~\eqref{eq: flagged identity}.
In particular, the $h_{rs} = h_r \factorial{x_1, \dotsc, x_{n + s}}{c}{}$ specialisation leads to the same factorial Schur function as $h_{rs} = h_r \factorial{x}{\tau^s c}{}$.

We necessarily have that 
\[
    s_{(r, 0, \dotsc, 0)} = h_r,
\]
and we define 
\[
    e_r = s_{(1^r)}
\]
for $r \ge 0$ and $e_r = 0$ for $r < 0$.

Macdonald proves the following N\"{a}gelsbach--Kostka identity:
\begin{equation}
    s_{\lambda / \mu} 
    = 
    \det \left[ \phi^{-\mu'_j - 1 + j} e_{\lambda'_i - \mu'_j - i + j} \right]_{i,j = 1}^{m},
    \label{eq: original nagelsbach-kostka}
\end{equation}
where $\lambda'$ and $\mu'$ are the conjugate partitions, and $\ell(\lambda'), \ell(\mu') \le m$.
It can be equivalently rephrased as follows. Consider the lower unitriangular matrices 
\begin{align*}
    & A = \left[ \phi^{1 - n + j} h_{i - j} \right]_{i, j = 0}^N, \\
    & B = \left[ (-1)^{i - j} \phi^{i - n} e_{i - j} \right]_{i, j = 0}^N,
\end{align*}
where $N + 1 = n + m$.
It is easy to see that the $n \times n$ minor of $A$ corresponding to the rows $\lambda_i + n - i$ and columns $\mu_j + n - j$, with $1 \le i, j \le n$, is equal to $s_{\lambda / \mu}$. The corresponding complementary cofactor of $B^t$ has row indices $n - 1 + i - \lambda'_i$ and column indices $n - 1 + j - \mu'_j$, with $1 \le i, j \le m$, and is equal to the right-hand side of~\eqref{eq: original nagelsbach-kostka}.

The N\"{a}gelsbach--Kostka identity is equivalent to the fact that the matrices $A$ and $B$ are inverses of each other. 

\begin{remark}
    The matrices $A$ and $B$ are not the transposes of the matrices $H$ and $E$ from~\cite[(9.3)]{Macdonald}, but they can be transformed into them using Macdonald's involution $\varepsilon$, see~\cite[(9.5)]{Macdonald}, as 
    \begin{align*}
        & A = \varepsilon \phi^{n - 1} (H^t), \\
        & B = \varepsilon \phi^{n - 1} (E^t).
    \end{align*}
\end{remark}

\subsection{Symplectic ninth variation}

Now we can define a more general ninth variation of the symplectic characters:
\begin{equation} \label{Def: 9th sp}
    \sp_\lambda 
    = 
    \dfrac{1}{2} 
    \det \left[ \phi^{1 - j} h_{\lambda_i - i + j} + \phi^{j - 1} h_{\lambda_i - i - j + 2} \right]_{i, j = 1}^n.
\end{equation}
Specialising $h_{rs} = h_r \factorial{x, \overline{x}}{\tau^{s - n}c }{}$ and setting $c_n = 0$ for $n < 0$, we obtain the factorial characters $\sp_\lambda \factorial{x}{c}{}$.

We note that in fact the entries in the first column are equal to $2 h_{\lambda_i - i + 1}$, which explains the appearance of the $\frac{1}{2}$ in front of the determinant. Hence we can restate the definition as follows. Consider the lower unitriangular matrix $A^+ = \left[ A^+_{ij} \right]_{i,j = 0}^N$ with entries 
\[
    A^+_{ij}
    =
    \begin{cases}
        A_{ij} + A_{i, \, 2(n - 1) - j} & \mbox{\text{if} $j < n - 1$}; \\
        A_{ij} & \mbox{\text{if} $j \ge n - 1$},
    \end{cases}
\]
where we assume that $A_{ij} = 0$ whenever $i$ or $j$ is not between $0$ and $N$.
Then, as above, the $n \times n$ minor of $A^+$ corresponding to the rows $\lambda_i + n - i$ and columns $n - j$ ($1 \le i, j \le n$) is equal to $\sp_{\lambda}$.

More generally, given another partition $\mu$ of length $\ell(\mu) \le n$, we can define the skew ninth variation character $\sp_{\lambda / \mu}$ as the minor of the matrix $A^+$ corresponding to the rows $\lambda_i + n - i$ and columns $\mu_j + n - j$ ($1 \le i, j \le n$).

Consider the lower unitriangular matrix $B^- = \left[ B^-_{ij} \right]_{i,j = 0}^N$ with entries 
\[
    B^-_{ij}
    =
    \begin{cases}
        B_{ij} - B_{2(n - 1) - i, j} & \mbox{\text{if} $i > n - 1$}; \\
        B_{ij} & \mbox{\text{if} $i \le n - 1$},
    \end{cases}
\]
with the same assumption.
The next lemma can be found in Fulton--Harris~\cite[Lemma~A.43]{FultonHarris}. 
We give a proof for the sake of completeness.
\begin{proposition}
    The matrices $A^+$ and $B^-$ are inverses of each other.
\end{proposition}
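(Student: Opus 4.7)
The plan is to deduce $A^+ B^- = I$ directly from the fact, already established by Macdonald's N\"{a}gelsbach--Kostka identity~\eqref{eq: original nagelsbach-kostka} (phrased as ``$A$ and $B$ are inverses of each other''), that $A B = I$. The idea is that the symmetrisation $A \rightsquigarrow A^+$ in the column index around $j = n - 1$ and the antisymmetrisation $B \rightsquigarrow B^-$ in the row index around $i = n - 1$ are arranged so that the extra contributions to the product cancel in pairs.

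Concretely, I would write out the entry
\[
    (A^+ B^-)_{ij}
    =
    \sum_{k = 0}^N A^+_{ik} B^-_{kj}
\]
and split the sum according to whether $k < n - 1$, $k = n - 1$, or $k > n - 1$. Expanding $A^+_{ik}$ and $B^-_{kj}$ by their definitions gives
\[
    (A^+ B^-)_{ij}
    =
    (A B)_{ij}
    \, + \,
    \sum_{k < n - 1} A_{i, \, 2(n - 1) - k} B_{kj}
    \, - \,
    \sum_{k > n - 1} A_{ik} B_{2(n - 1) - k, \, j}.
\]
In the first extra sum, I would substitute $m = 2(n - 1) - k$, turning it into $\sum_{m = n}^{2(n - 1)} A_{im} B_{2(n - 1) - m, \, j}$. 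This coincides exactly with the second extra sum, since $B_{i'j} = 0$ whenever $i' < 0$ forces the second sum to stop at $k = 2(n - 1)$ as well. Hence the two extra sums cancel, and one is left with $(A^+ B^-)_{ij} = (AB)_{ij} = \delta_{ij}$.

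The argument is essentially bookkeeping; I do not foresee any serious obstacle. The only point requiring a moment of care is to verify that the boundary conventions ($h_r = 0$ and $e_r = 0$ for $r < 0$, together with $A_{ij} = B_{ij} = 0$ outside the index range) make the two sums have \emph{exactly} matching ranges of summation after the substitution $m = 2(n - 1) - k$. Once that is checked, the proposition follows.
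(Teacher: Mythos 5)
Your proposal is correct and follows essentially the same route as the paper's own proof: expand $(A^+B^-)_{ij}$, split off $(AB)_{ij}=\delta_{ij}$, and cancel the two extra sums after reindexing $k\mapsto 2(n-1)-k$ and truncating via the vanishing convention. No substantive difference.
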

\begin{proof}
    Let us expand the $ij$-th entry of $A^+ B^-$:
    \[
        \begin{gathered}
        (A^+ B^-)_{ij}
        =
        \sum_{0 \le k < n - 1} (A_{ik} + A_{i, 2(n - 1) - k}) B_{kj} 
        + A_{i, n - 1} B_{n - 1, j} 
        + \\ + 
        \sum_{n - 1 < k \le N} A_{ik} (B_{kj} - B_{2(n - 1) - k, j}) 
        = 
        \sum_{k = 0}^N A_{ik} B_{kj}
        + 
        \sum_{0 \le k < n - 1}  A_{i, 2(n - 1) - k} B_{kj} \, 
        - \\ -
        \sum_{n - 1 < k \le N} A_{ik} B_{2(n - 1) - k, j}.
        \end{gathered}
    \]
    The first sum, $ \sum_{k = 0}^N A_{ik} B_{kj} $, is equal to $\delta_{ij}$ since the matrices $A$ and $B$ are the inverses of each other. Noticing that for $ B_{2(n - 1) - k, j}$ to be nonzero it is necessary that $k \le 2(n - 1)$, we can rewrite the last sum:
    \[
        \sum_{n - 1 < k \le N} A_{ik} B_{2(n - 1) - k, j} 
        = 
        \sum_{n - 1 < k \le 2 (n - 1)} A_{ik} B_{2(n - 1) - k, j}.
    \]
    Now it is left to note that 
    \[
        \sum_{0 \le k < n - 1}  A_{i, 2(n - 1) - k} B_{kj} 
        = 
        \sum_{n - 1 < k \le 2 (n - 1)} A_{ik} B_{2(n - 1) - k, j},
    \]
    which gives us 
    \[
        (A^+ B^-)_{ij} = \delta_{ij}.
    \]
    This completes the proof.
\end{proof}

It follows that each minor of $A^+$ is equal to the complementary cofactor of the transpose of $B^-$. This readily implies the following symplectic N\"{a}gelsbach--Kostka identity.
\begin{corollary}
    We have 
    \begin{equation}
        \sp_\lambda 
        = 
        \det \left[ \phi^{j - 1} e_{\lambda'_i - i + j} - \phi^{j - 1} e_{\lambda'_i - i - j} \right]_{i,j = 1}^{m}.
        \label{eq: ninth sp nagelsbach-kostka}
    \end{equation}
\end{corollary}

In particular, for the Foley--King factorial characters $\sp_\lambda \factorial{x}{c}{}$, we get 
\[
    \sp_\lambda \factorial{x}{c}{}
    = 
    \det \left[ e_{\lambda'_i - i + j} \factorial{x, \overline{x}}{\tau^{(j - 1) - n} c}{} - e_{\lambda'_i - i - j} \factorial{x, \overline{x}}{\tau^{(j - 1) - n} c}{} \right]_{i,j = 1}^{m}.
\]

\subsection{Orthogonal ninth variation}

In a similar way we can define a ninth variation of the orthogonal characters:
\begin{equation} \label{Def: 9th o}
    \o_\lambda
    = 
    \det 
    \left[ 
        \phi^{1 - j} h_{\lambda_i - i + j}
        - 
        \phi^{1 + j} h_{\lambda_i - i - j}
    \right]_{i, j = 1}^n.
\end{equation}
Specialising $h_{rs} = h_r \factorial{x, \overline{x}, 1}{\tau^{s - n - 1} c}{}$  and assuming $c_n = 0$ for $n < 0$, we obtain $\so_\lambda \factorial{x}{c}{}$.
Setting $h_{rs} = h_r \factorial{x, \overline{x}}{\tau^{s - n} c}{}$ and this time assuming $c_n = 0$ for $n \le 0$, we get $\o_\lambda \factorial{x}{c}{}$.

Consider the lower unitriangular matrix $A^\circ = \left[ A^\circ_{ij} \right]_{i,j = 0}^N$ with entries 
\[
    A^\circ_{ij}
    =
    \begin{cases}
        A_{ij} - A_{i, \, 2n - j} & \mbox{\text{if} $j < n$}; \\
        A_{ij} & \mbox{\text{if} $j \ge n$},
    \end{cases}
\]
where we assume that $A_{ij} = 0$ whenever $i$ or $j$ is not between $0$ and $N$. The $n \times n$ minor of $A^\circ$ corresponding to the rows $\lambda_i + n - i$ and columns $n - j$ ($1 \le i, j \le n$) is equal to $\o_{\lambda}$.

Once again, we can also define the skew character $\o_{\lambda / \mu}$  as the minor of the matrix $A^\circ$ with rows $\lambda_i + n - i$ and columns $\mu_j + n - j$.

Consider also the lower unitriangular matrix $B^\times = \left[ B^\times_{ij} \right]_{i,j = 0}^N$ with entries 
\[
    B^\times_{ij}
    =
    \begin{cases}
        B_{ij} + B_{2n - i, j} & \mbox{\text{if} $i > n$}; \\
        B_{ij} & \mbox{\text{if} $i \le n$},
    \end{cases}
\]
with the same assumption as above.

\begin{proposition}
    The matrices $A^\circ$ and $B^\times$ are inverses of each other.
\end{proposition}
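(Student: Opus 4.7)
The plan is to imitate, step for step, the argument just given for the symplectic pair $A^+, B^-$. Since both matrices are lower unitriangular of the same size, it suffices to verify $A^\circ B^\times = I$, which I do entry by entry. I expand
\[
    (A^\circ B^\times)_{ij} = \sum_{k = 0}^N A^\circ_{ik} B^\times_{kj}
\]
and split the sum according to whether $k < n$, $k = n$, or $k > n$, because this is precisely where the piecewise definitions of $A^\circ$ and $B^\times$ switch.

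In the range $k < n$ the first factor is $A_{ik} - A_{i, 2n - k}$ while $B^\times_{kj} = B_{kj}$; at $k = n$ both factors reduce to the original $A$ and $B$ entries; for $k > n$ we have $A^\circ_{ik} = A_{ik}$ and $B^\times_{kj} = B_{kj} + B_{2n - k, j}$. Regrouping the ``original'' contributions with the ``reflected'' ones, I find
\[
    (A^\circ B^\times)_{ij}
    =
    \sum_{k = 0}^N A_{ik} B_{kj}
    - \sum_{0 \le k < n} A_{i, 2n - k} B_{kj}
    + \sum_{n < k \le N} A_{ik} B_{2n - k, j}.
\]
The first sum equals $\delta_{ij}$ since $AB = I$, so the whole claim reduces to showing that the two ``reflected'' sums cancel.

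For the cancellation I reindex the middle sum by $l = 2n - k$: the index set $\{0, 1, \dotsc, n - 1\}$ maps bijectively to $\{n + 1, n + 2, \dotsc, 2n\}$, turning that sum into $-\sum_{l = n + 1}^{2n} A_{il} B_{2n - l, j}$. The third sum is effectively supported on the very same range, because $B_{2n - k, j} = 0$ as soon as $2n - k < 0$, i.e.~$k > 2n$. Hence the two reflected sums agree term-by-term and cancel, giving $(A^\circ B^\times)_{ij} = \delta_{ij}$.

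I do not anticipate any genuine obstacle: the argument is a direct transcription of the symplectic calculation, and the only new wrinkle is the opposite sign convention (a minus sign on $A^\circ$ and a plus sign on $B^\times$, versus the reverse in the symplectic case), which leaves the cancellation pattern intact. The only point to be careful about is simply bookkeeping the index ranges so that the two reflected sums really do align under the change of variables $l = 2n - k$.
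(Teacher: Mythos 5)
Your proof is correct and follows essentially the same route as the paper's: expand $(A^\circ B^\times)_{ij}$ according to the three ranges $k<n$, $k=n$, $k>n$, isolate $\sum_k A_{ik}B_{kj}=\delta_{ij}$, and cancel the two reflected sums via the reindexing $l=2n-k$ together with the vanishing conventions on out-of-range entries. No issues.
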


\begin{proof}
    Expanding the $ij$-th entry of $A^\circ B^\times$, we get
    \[
        \begin{gathered}
        (A^\circ B^\times)_{ij} = \sum_{k = 0}^{n - 1} (A_{ik} - A_{i, 2n - k}) B_{kj} + A_{in} B_{nj} + \sum_{k = n + 1}^N A_{ik} (B_{kj} + B_{2n - k, j}) = \\
        = 
        \sum_{k = 0}^N A_{ik} B_{kj} - \sum_{k = 0}^{n - 1} A_{i, 2n - k} B_{kj} + \sum_{k = n + 1}^N A_{ik} B_{2n - k, j}.
        \end{gathered}
    \]
    The latter sum is equal to $\sum_{k = n + 1}^N A_{ik} B_{2n - k, j} = \sum_{k = n + 1}^{2n} A_{ik} B_{2n - k, j} = \sum_{k = 0}^{n - 1} A_{i, 2n - k} B_{kj}$, since $B_{2n - k, j}$ is zero for $k > 2n$, which gives us 
    \[
        (A^\circ B^\times)_{ij} = \sum_{k = 0}^N A_{ik} B_{kj} = \delta_{ij},
    \]
    as required.
\end{proof}

It follows that we have the following orthogonal N\"{a}gelsbach--Kostka identity.
\begin{corollary}
    We have 
    \begin{equation}
        \o_\lambda 
        =
        \dfrac{1}{2} 
        \det \left[ \phi^{i - 1} e_{\lambda'_i - i + j} + \phi^{1 - i} e_{\lambda'_i - i - j + 2} \right]_{i, j = 1}^m.
        \label{eq: ninth o nagelsbach-kostka}
    \end{equation}
\end{corollary}

In the factorial specialisations, we get
\[
    \so_\lambda \factorial{x}{c}{} = 
    \dfrac{1}{2} 
    \det \left[ e_{\lambda'_i - i + j} \factorial{x, \overline{x}, 1}{\tau^{(i - 1) - n - 1} c}{} + e_{\lambda'_i - i - j + 2} \factorial{x, \overline{x}, 1}{\tau^{(1 - i) - n - 1} c}{} \right]_{i, j = 1}^m.
\]
and
\[
    \o_\lambda \factorial{x}{c}{} =
    \det \left[ e_{\lambda'_i - i + j} \factorial{x, \overline{x}}{\tau^{(i - 1) - n} c}{} + e_{\lambda'_i - i - j + 2} \factorial{x, \overline{x}}{\tau^{(1 - i) - n} c}{} \right]_{i, j = 1}^m.
\]

\begin{remark}
    Note that the Littlewood duality~\cite[Theorem 2.3.2]{KoikeTerada} is lost. That is, the involution $\omega \colon \phi^s h_r \mapsto \phi^{-s} e_r$, which maps $s_\lambda$ to $s_{\lambda'}$ as follows from the N\"{a}gelsbach--Kostka identity~\eqref{eq: original nagelsbach-kostka}, does not map $\sp_\lambda$ to $\o_{\lambda'}$.
\end{remark}

\printbibliography

\end{document}